\newcommand{\eps}{\varepsilon}
\newcommand{\logLogSlopeTriangle}[6]
{

    \pgfplotsextra
    {
        \pgfkeysgetvalue{/pgfplots/xmin}{\xmin}
        \pgfkeysgetvalue{/pgfplots/xmax}{\xmax}
        \pgfkeysgetvalue{/pgfplots/ymin}{\ymin}
        \pgfkeysgetvalue{/pgfplots/ymax}{\ymax}

        \pgfmathsetmacro{\xArel}{#1}
        \pgfmathsetmacro{\yArel}{#3}
        \pgfmathsetmacro{\xBrel}{#1-#2}
        \pgfmathsetmacro{\yBrel}{\yArel}
        \pgfmathsetmacro{\xCrel}{\xArel}

        \pgfmathsetmacro{\lnxB}{\xmin*(1-(#1-#2))+\xmax*(#1-#2)} 
        \pgfmathsetmacro{\lnxA}{\xmin*(1-#1)+\xmax*#1} 
        \pgfmathsetmacro{\lnyA}{\ymin*(1-#3)+\ymax*#3} 
        \pgfmathsetmacro{\lnyC}{\lnyA+#4*(\lnxA-\lnxB)}
        \pgfmathsetmacro{\yCrel}{\lnyC-\ymin)/(\ymax-\ymin)} 

        \coordinate (A) at (rel axis cs:\xArel,\yArel);
        \coordinate (B) at (rel axis cs:\xBrel,\yCrel);
        \coordinate (C) at (rel axis cs:\xCrel,\yCrel);

        \draw[#5]   (A)--
                    (B)-- 
                    (C)-- node[pos=0.5,anchor=west] {#6}
                    cycle;
    }
}
\newcommand{\logLogSlopeTriangleBelow}[6]
{

    \pgfplotsextra
    {
        \pgfkeysgetvalue{/pgfplots/xmin}{\xmin}
        \pgfkeysgetvalue{/pgfplots/xmax}{\xmax}
        \pgfkeysgetvalue{/pgfplots/ymin}{\ymin}
        \pgfkeysgetvalue{/pgfplots/ymax}{\ymax}

        \pgfmathsetmacro{\xArel}{#1}
        \pgfmathsetmacro{\yArel}{#3}
        \pgfmathsetmacro{\xBrel}{#1-#2}
        \pgfmathsetmacro{\yBrel}{\yArel}
        \pgfmathsetmacro{\xCrel}{\xArel}

        \pgfmathsetmacro{\lnxB}{\xmin*(1-(#1-#2))+\xmax*(#1-#2)} 
        \pgfmathsetmacro{\lnxA}{\xmin*(1-#1)+\xmax*#1} 
        \pgfmathsetmacro{\lnyA}{\ymin*(1-#3)+\ymax*#3} 
        \pgfmathsetmacro{\lnyC}{\lnyA+#4*(\lnxA-\lnxB)}
        \pgfmathsetmacro{\yCrel}{\lnyC-\ymin)/(\ymax-\ymin)} 

        \coordinate (A) at (rel axis cs:\xArel,\yArel);
        \coordinate (B) at (rel axis cs:\xBrel,\yCrel);
        \coordinate (C) at (rel axis cs:\xBrel,\yArel);

        \draw[#5]   (A)--
                    (B)-- node[pos=0.5,anchor=east] {#6}
                    (C)-- 
                    cycle;
    }
}
\newcommand{\dual}[2]{\langle#1\hspace*{.5mm},#2\rangle}
\newcommand{\ip}[2]{(#1\hspace*{.5mm},#2)}
\newcommand{\norm}[3][]{#1\|#2#1\|_{#3}}
\def\enorm#1{|\hspace*{-.5mm}|\hspace*{-.5mm}|#1|\hspace*{-.5mm}|\hspace*{-.5mm}|}
\newcommand{\diam}{\mathrm{diam}}
\def\div{{\rm div\,}}
\newcommand{\material}{A}
\newcommand{\xx}{\boldsymbol{x}}
\newcommand{\yy}{\boldsymbol{y}}
\newcommand{\Riesz}{\ensuremath{\mathcal{R}}}
\newcommand{\FE}{\ensuremath{\mathcal{A}}}
\newcommand{\dlo}{\ensuremath{\mathcal{K}}}
\newcommand{\slo}{\ensuremath{\mathcal{V}}}
\newcommand{\hyp}{\ensuremath{\mathcal{W}}}
\newcommand{\stpo}{\ensuremath{\mathcal{S}\!\mathcal{P}}}
\newcommand{\set}[2]{\left\lbrace{#1} \,:\, {#2} \right\rbrace}
\newcommand{\R}{\ensuremath{\mathbb{R}}}
\newcommand{\N}{\ensuremath{\mathbb{N}}}
\newcommand{\HH}{\ensuremath{\boldsymbol{H}}}
\newcommand{\LL}{\ensuremath{\mathcal{L}}}
\newcommand{\XX}{\ensuremath{\mathcal{X}}}
\newcommand{\YY}{\ensuremath{\mathcal{Y}}}
\newcommand{\II}{\ensuremath{\mathcal{I}}}
\newcommand{\normal}{{\boldsymbol{n}}}
\newcommand{\vv}{\ensuremath{\boldsymbol{v}}}
\newcommand{\TT}{\ensuremath{\mathcal{T}}}
\newcommand{\MM}{\ensuremath{\mathcal{M}}}
\renewcommand{\SS}{\ensuremath{\mathcal{S}}}
\newcommand{\PP}{\ensuremath{\mathcal{P}}}
\newcommand{\OO}{\ensuremath{\mathcal{O}}}
\newcommand{\uu}{\boldsymbol{u}}
\newcommand{\err}{\operatorname{err}}
\renewcommand{\H}{\widetilde{H}}
\newcommand{\RR}{\mathcal{R}}
\renewcommand{\SS}{\mathcal{S}}
\newcommand{\edual}[3][]{#1\langle\!\!#1\langle#2\,,\,#3#1\rangle\!\!#1\rangle}
\newcommand{\refine}{{\rm refine}}
\def\AA{\mathcal{A}}
\def\reff#1#2{\!\stackrel{\eqref{#1}}{#2}\!}
\newtheorem{lemma}{Lemma}
\newtheorem{theorem}[lemma]{Theorem}
\newtheorem{proposition}[lemma]{Proposition}
\newtheorem{remark}[lemma]{Remark}
\newtheorem{algorithm}[lemma]{Algorithm}
\renewcommand{\subsection}[1]{\refstepcounter{subsection}\medskip{\bf\thesubsection.~#1.}}
\title{A linear Uzawa-type solver\\ for nonlinear transmission problems}
\author{Thomas F\"uhrer}
\author{Dirk Praetorius}
\address{TU Wien, Institute for Analysis and Scientific Computing, Wiedner Hauptstr.~8--10/E101/4, 1040 Wien, Austria}
\email{thomas.fuehrer@tuwien.ac.at\quad\rm(corresponding author)}
\email{dirk.praetorius@tuwien.ac.at}
\subjclass[2010]{65N30,65N38}
\keywords{FEM-BEM coupling, adaptivity, Uzawa algorithm}
\thanks{\textbf{Acknowledgement.} The authors acknowledge support through the Austrian Science Fund (FWF) under grant
P27005 \textit{Optimal adaptivity for BEM and FEM-BEM coupling}.}
\date{\today}
\begin{document}

\begin{abstract}
We propose an Uzawa-type iteration for the Johnson--N\'ed\'elec formulation of a Laplace-type transmission problem with possible (strongly monotone) nonlinearity in the interior domain. In each step, we sequentially solve one BEM for the weakly-singular integral equation associated with the Laplace-operator and one FEM for the linear Yukawa equation. In particular, the nonlinearity is only evaluated to build the right-hand side of the Yukawa equation. We prove that the proposed method leads to linear convergence with respect to the number of Uzawa iterations. Moreover, while the current analysis of a direct FEM-BEM discretization of the Johnson--N\'ed\'elec formulation requires some restrictions on the ellipticity (resp.\ strong monotonicity constant) in the interior domain, our Uzawa-type solver avoids such assumptions.
\end{abstract}
\maketitle


\section{Introduction}

\subsection{State of the art}
The mathematical understanding of convergence of adaptive algorithms even with optimal rates has matured. We refer to the seminal works~\cite{doerfler,mns,stevenson07,ckns,ffp14} for the adaptive finite element method (FEM),~\cite{fkmp13,gantumur} for the adaptive boundary element method (BEM), as well as to~\cite{axioms} for some abstract axiomatic framework. Convergence of the adaptive FEM-BEM coupling has been proved in~\cite{afp} for heuristic $h-h/2$ type error estimators as well as in~\cite{affkmp} for residual error estimators, while the proof of optimal convergence rates is still missing due to the lack of some crucial orthogonality property (which is so far only known for elliptic problems which are symmetric up to a compact perturbation; see~\cite{ffp14,bhp17}). For linear problems, the influence of the inexact (iterative) solution of the Galerkin systems on the (optimal) convergence of adaptive FEM is analyzed in~\cite{agl13,ffp14}, while adaptive inexact FEM for strongly monotone problems has been considered in~\cite{ghps16}.

In the present work, we consider a possibly nonlinear transmission problem on the full space $\R^d$, $d\ge2$. Then, the FEM-BEM coupling is often the method of choice, since it allows to handle the nonlinearity in the bounded FEM domain, while properly treating the radiation condition at infinity. We follow an idea from~\cite{stokesUzawa} for the linear Stokes problem and transfer their approach to the nonlinear FEM-BEM coupling. Unlike~\cite{ghps16}, we do not use a Picard iteration to linearize the coupled system, but employ an Uzawa-type outer iteration combined with adaptive mesh-refinement for both the FEM part and the BEM part of the coupled system in an inner iteration. One particular benefit of the proposed approach is that each step of the Uzawa iteration only considers either a linear and symmetric FEM problem or a linear BEM problem, despite the nonlinearity (or the non-symmetry) of the transmission problem resp.\ of its FEM-BEM coupling formulation. Moreover, our analysis of the proposed algorithm also allows the inexact (iterative) solution of these FEM or BEM problems. In particular, we employ only standard preconditioning techniques for the FEM \emph{or} the BEM on adaptively refined meshes, while no preconditioner for the coupled FEM-BEM system is required.

Throughout, our focus is on the Johnson-N\'ed\'elec FEM-BEM coupling formulation~\cite{johned}. Unlike the so-called symmetric coupling~\cite{han90,costabel} which involves all four boundary integral integral operators associated with the partial differential equation in the exterior domain, the Johnson-N\'ed\'elec coupling relies only on the simple-layer and the double-layer integral operator and, in particular, avoids the so-called hypersingular integral operator. For this reason, the Johnson-N\'ed\'elec coupling is often preferred in practice, even if the symmetric coupling is better understand from the point of numerical analysis.

We stress that well-posedness of the Johnson-N\'ed\'elec coupling on polygonal domains has only been proved recently in the seminal work~\cite{sayas09} for linear Laplace- and Yukawa-type transmission problems (see also~\cite{s2,os,os2014} for general linear problems), while stability for nonlinear problems has been treated in~\cite{affkmp,FBlame}. Throughout, the current analysis requires that the ellipticity (resp.\ monotonicity) in the FEM domain is sufficiently large (see~\cite{s2,os,os2014,affkmp,FBlame}), since the proof of the discrete {\rm inf--sup} condition (resp.\ discrete monotonicity estimate) essentially relies on energy arguments. Numerical experiments in~\cite{affkmp}, however, indicate that this might not be necessary in practice.

In any case, it is worth noting that the present Uzawa-type algorithm only requires well-posedness of the continuous problem. Our analysis avoids any additional assumption on the validity of the discrete {\rm inf--sup} condition  (resp.\ discrete monotonicity). In explicit terms, the proposed algorithm is proved to be stable, even if the pair of discrete FEM and BEM spaces would not yield a positive {\rm inf--sup} constant and would thus be unstable for the direct solution of the Johnson-N\'ed\'elec FEM-BEM coupling. Numerical experiments give evidence for optimal convergence behavior of the proposed algorithm even if the (unknown) exact solution of the transmission problem has singularities.

\subsection{Model problem}
With $d\ge2$, let $\Omega\subseteq\R^d$ be a bounded and simply connected Lipschitz domain with boundary $\Gamma := \partial\Omega$ and normal vector $\normal\in L^\infty(\Gamma)$ pointing from $\Omega$ to the unbounded domain $\Omega^{\mathrm{ext}} := \R^d \setminus \overline\Omega$. Given $f\in L^2(\Omega)$, $u_0\in H^{1/2}(\Gamma)$, and $\phi_0 \in H^{-1/2}(\Gamma)$, we seek the solution $(u,u^\mathrm{ext})$ of the transmission problem
\begin{subequations}\label{eq:modell}
\begin{alignat}{2}
 \AA u := -\div(A(\nabla u)) + b(\nabla u) + c(u)
  &= f &\quad&\text{in }\Omega,\label{eq:modell:int} \\
  -\Delta u^{\mathrm{ext}} &= 0 &\quad&\text{in } \Omega^{\mathrm{ext}},
  \label{eq:modell:ext} \\
  u-u^{\mathrm{ext}} &= u_0 &\quad&\text{on } \Gamma, \label{eq:modell:jump} \\
  (\material\nabla u - \nabla u^{\mathrm{ext}})\cdot\normal & = \phi_0 &\quad& \text{on } \Gamma,
  \label{eq:modell:jumpn}
\end{alignat}
where the behavior at infinity is prescribed by
\begin{alignat}{2}
  u^{\mathrm{ext}}(\xx) &= 
  \begin{cases}
      C_\mathrm{rad} \log|\xx| + \OO(|\xx|^{-1}) &\text{for }d=2, \\
      \OO(|\xx|^{-1}) &\text{for }d=3, 
    \end{cases}
    &\quad&\text{as } |\xx|\to\infty \label{eq:modell:rad}
\end{alignat}
\end{subequations}
with some unknown constant $C_\mathrm{rad}\in\R$. 
Here, $\AA u$ is a (nonlinear) second-order elliptic differential operator
with $A:\Omega\times\R^d\to\R^d$, $b:\Omega\times\R^d\to\R$, and $c:\Omega\times\R\to\R$,
understood in the weak sense, i.e., $\FE : H^1(\Omega) \to \widetilde H^{-1}(\Omega) = H^1(\Omega)^*$,
\begin{align}\label{eq:defFE}
  \ip{\AA u}{v}_\Omega := \int_\Omega \big( A(\nabla u)\cdot\nabla v + b(\nabla u)v + {c(u)}v \big) \,dx
  \quad\text{for all } u,v\in H^1(\Omega).
\end{align}
We suppose that $\AA$ is strongly semi-monotone and Lipschitz continuous, i.e., 
there exist $c_\FE, C_\FE>0$ such that, for all $v,w\in H^1(\Omega)$, it holds that
\begin{align*}
  c_\FE \norm{\nabla w-\nabla v}{L^2(\Omega)}^2 &\leq \ip{\AA w-\AA v}{w-v}_\Omega 
  \quad\text{and}\quad 
  \norm{\AA w-\AA v}{\H^{-1}(\Omega)} \leq C_\FE \norm{w-v}{H^1(\Omega)}.
\end{align*}
In the case $d=2$, we suppose that $\diam(\Omega)<1$ to ensure coercivity of the weakly-singular integral
operator $\slo$, defined in Section~\ref{sec:intop}.
It is proved, e.g., in~\cite{affkmp,cs1995} that the model problem~\eqref{eq:modell} then admits a unique solution
$(u,u^\mathrm{ext})$.
We refer to Section~\ref{section:spaces} for the definition of the involved function spaces.

\subsection{Contributions and outline}
To develop our ideas, we first formulate the Uzawa-type iteration on the continuous level. To this end, Section~\ref{section:continuous:Uzawa} recalls the functional analytic setting (Section~\ref{section:spaces}) as well as the Johnson-N\'ed\'elec formulation of the transmission problem~\eqref{eq:modell} (Section~\ref{sec:intop}). Then, Algorithm~\ref{algorithm:uzawa:continuous} formulates the Uzawa iteration and Proposition~\ref{prop:uzawa:continuous} proves linear convergence with respect to the number of Uzawa iterations.

Section~\ref{section:discrete:Uzawa} is the mathematical core of the manuscript. We discretize each step of the Uzawa iteration by conforming BEM resp.\ FEM with piecewise polynomials of order $p-1$ resp.\ $p$. Algorithm~\ref{algorithm:uzawa:ideal} formulates the outer Uzawa iteration for the discretized problem, and Theorem~\ref{prop:uzawa:ideal} proves linear convergence. The inner iteration with adaptive FEM (resp.\ adaptive BEM) is the topic of Section~\ref{section:adaptive}. In the spirit of~\cite{ckns}, we give an abstract analysis of an adaptive mesh-refining algorithm (Algorithm~\ref{algorithm:adaptive}) which also allows the inexact solution of the arising linear systems by means of the preconditioned conjugate gradient method (PCG). Proposition~\ref{theorem:adaptive} proves that the algorithm reaches any prescribed tolerance in finite computational time. Moreover, for properly chosen preconditioners, the number of CG iterations in each step of the adaptive algorithm is uniformly bounded (Remark~\ref{remark:redpcg}). We apply this adaptive algorithm in each step of the outer Uzawa iteration for the BEM part (Section~\ref{section:uzawa_step_i}) and for the FEM part (Section~\ref{section:uzawa_step_ii}), where we employ a weighted-residual error estimator for the BEM and the standard residual error estimator for the FEM. Theorem~\ref{prop:bem:estimator} resp.\ Theorem~\ref{prop:fem:estimator} prove that the number of adaptive mesh-refinement steps (Algorithm~\ref{algorithm:adaptive}) in each step of the discrete Uzawa iteration (Algorithm~\ref{algorithm:uzawa:ideal}) is generically uniformly bounded.

Numerical experiments in Section~\ref{sec:ex} give empirical evidence that the proposed algorithm does not only provide a \emph{linear} solution strategy for a possibly \emph{nonlinear} transmission problem~\eqref{eq:modell}, but also leads to optimal convergence rates with respect to the number of elements.
 
\subsection{General notation}
Throughout the results, we state all constants as well as their dependencies. To abbreviate the presentation in proofs, we write $A\lesssim B$ if $A \le cB$ with a constant $c>0$ which is clear from the  context. Morever, $A \simeq B$ abbreviates $A \lesssim B \lesssim A$.

\section{Continuous Uzawa iteration}
\label{section:continuous:Uzawa}

\subsection{Involved function spaces}
\label{section:spaces}%
For any measurable subset $\omega\subseteq\Omega$ resp.\ $\omega\subseteq\Gamma$, let $\norm{\cdot}\omega:=\norm{\cdot}{L^2(\omega)}$ denote the
$L^2(\omega)$ norm which is induced by the $L^2(\omega)$ scalar product $\ip\cdot\cdot_\omega$.
Let $H^1(\Omega)$ denote the usual Sobolev space on $\Omega$ with norm 
\begin{align*}
  \norm{\cdot}{H^1(\Omega)}^2 := \norm{\cdot}\Omega^2 + \norm{\nabla(\cdot)}\Omega^2.
\end{align*}
Let $H^{1/2}(\Gamma)$ denote the fractional Sobolev space on the boundary with norm
\begin{align*}
  \norm{\widehat u}{H^{1/2}(\Gamma)} := \inf \set{\norm{u}{H^1(\Omega)}}{u\in H^1(\Omega) \text{ with } \gamma_0 u =
\widehat u},
\end{align*}
where $\gamma_0 :H^1(\Omega) \to H^{1/2}(\Gamma)$ denotes the trace operator.

Let $\widetilde H^{-1}(\Omega) = ( H^1(\Omega) )^*$ resp.\ $H^{-1/2}(\Gamma) = (H^{1/2}(\Gamma))^*$ denote the dual spaces of $H^1(\Omega)$ resp.\ $H^{1/2}(\Gamma)$, where the duality pairings extend the $L^2$ scalar products and are hence denoted by $\ip\cdot\cdot_\Omega$ resp.\ $\ip\cdot\cdot_\Gamma$. For $\lambda\in L^2(\Omega)\subseteq \widetilde H^{-1}(\Omega)$ and $\phi\in L^2(\Gamma)\subseteq H^{-1/2}(\Gamma)$, we thus have
\begin{align*}
  \ip{\lambda}{v}_\Omega = \int_\Omega {\lambda}v
  \quad\text{as well as}\quad
  \ip{\phi}w_\Gamma = \int_\Gamma\phi{w}.
\end{align*}
The norms on $\widetilde H^{-1}(\Omega)$ and $H^{-1/2}(\Gamma)$ are defined by duality, i.e.,
\begin{align*}
  \norm{\lambda}{\widetilde H^{-1}(\Omega)} := \sup_{v\in H^1(\Omega)\backslash\{0\}}
  \frac{\ip{\lambda}v_\Omega}{\norm{v}{H^1(\Omega)}}, \quad 
  \norm{\phi}{H^{-1/2}(\Gamma)} := \sup_{w\in H^{1/2}(\Gamma)\backslash\{0\}}
  \frac{\ip{\phi}w_\Gamma}{\norm{w}{H^{1/2}(\Gamma)}}.
\end{align*}
Finally, let $\Riesz : H^1(\Omega) \to \widetilde H^{-1}(\Omega)$ denote the Riesz mapping, i.e.,
\begin{align*}
 \ip{\Riesz w}v_\Omega = \ip{\nabla w}{\nabla v}_\Omega + \ip{w}v_\Omega 
 \quad\text{for all } v,w\in H^1(\Omega).
\end{align*}

\subsection{Johnson--N\'ed\'elec formulation of model problem}
\label{sec:intop}%
With $G(\cdot)$ being the fundamental solution of the Laplace operator, we consider the boundary integral operators
\begin{align}
 \slo\phi(x) := \int_\Gamma G(x-y)\,\phi(y)\,d\Gamma(y)
 \quad\text{and}\quad
 \dlo v(x) := \int_\Gamma \partial_{\normal(y)}G(x-y)\,v(y)\,d\Gamma(y).
\end{align}
The single-layer integral operator $\slo : H^{-1/2+s}(\Gamma) \to H^{1/2+s}(\Gamma)$ is an isomorphism for all $-1/2\le
s\le 1/2$. Moreover, for $s=0$, it is even elliptic and symmetric, i.e., $\ip{\phi}{\slo\phi}_\Gamma\geq
c_\slo \norm{\phi}{H^{-1/2}(\Gamma)}^2$ and
$\ip{\psi}{\slo\phi}_\Gamma = \ip{\phi}{\slo\psi}_\Gamma$ for all $\phi,\psi\in H^{-1/2}(\Gamma)$. The double-layer integral operator $\dlo:H^{1/2+s}(\Gamma)\to H^{1/2+s}(\Gamma)$ is a bounded linear operator for all $-1/2\le s\le 1/2$.

A common way to solve~\eqref{eq:modell}, is to rewrite the solution of the exterior problem~\eqref{eq:modell:ext} with
the help of the representation formula. This usually leads to equations involving boundary integral operators. Different
methods are available which are equivalent on the continuous level, but lead to different discrete formulations; see, e.g.,~\cite{affkmp,costabel,coers,han90,johned}.

In this work, we consider the Johnson--N\'ed\'elec coupling~\cite{johned} with its variational formulation:
Find $(u,\phi) \in H^1(\Omega)\times H^{-1/2}(\Gamma)$ such that
\begin{subequations}\label{eq:jn}
\begin{align}
  \ip{\AA u}{v}_\Omega
  - \ip{\phi}{\gamma_0 v}_\Gamma &= \ip{f}v_\Omega + \ip{\phi_0}{\gamma_0 v}_\Gamma, \label{eq:jn:fem}\\
  \ip{\psi}{(\tfrac12-\dlo)\gamma_0 u}_\Gamma + \ip{\psi}{\slo\phi}_\Gamma  &= \ip{\psi}{(\tfrac12-\dlo)u_0}_\Gamma, \label{eq:jn:bem}
\end{align}
\end{subequations}
for all $(v,\psi)\in H^1(\Omega)\times H^{-1/2}(\Gamma)$. It is known that~\eqref{eq:jn} admits a unique solution $(u,\phi) \in H^1(\Omega)\times H^{-1/2}(\Gamma)$, and $\phi = \partial_{\normal}u^{\rm ext}$ is the normal derivative of the exterior solution $u^\mathrm{ext}$. 

With $\gamma_0':H^{-1/2}(\Gamma)\to\H^{-1}(\Omega)$ being the adjoint of the trace operator, the Johnson--N\'ed\'elec coupling can equivalently be reformulated as follows: Find $(u,\phi)\in H^1(\Omega)\times H^{-1/2}(\Gamma)$ such that
\begin{subequations}\label{eq:johnson-nedelec}
\begin{align}
\AA u &= f + \gamma_0'(\phi_0 + \phi),\\
\slo\phi &= (\dlo-1/2)(\gamma_0 u-u_0).
\end{align}
\end{subequations}
This operator formulation provides the starting point for the following Uzawa-type iterative solvers.

\begin{remark}\label{rem:jn:discrete}
If $\XX_\bullet\subseteq H^1(\Omega)$ and $\YY_\bullet\subseteq H^{-1/2}(\Gamma)$ are conforming subspaces, it is proved in~\cite{affkmp} that the discrete Johnson--N\'ed\'elec coupling 
\begin{subequations}\label{eq:jn:discrete}
\begin{align}
  \ip{\AA u_\bullet}{v_\bullet}_\Omega
  - \ip{\phi_\bullet}{\gamma_0 v_\bullet}_\Gamma &= \ip{f}{v_\bullet}_\Omega + \ip{\phi_0}{\gamma_0 v_\bullet}_\Gamma, \\
  \ip{\psi_\bullet}{\slo\phi_\bullet}_\Gamma  &= \ip{\psi_\bullet}{(\dlo-\tfrac12)(\gamma_0 u_\bullet - u_0)}_\Gamma
\end{align}
\end{subequations}
admits a unique solution $(u_\bullet,\phi_\bullet)\in\XX_\bullet\times\YY_\bullet$ provided that $1\in\YY_\bullet$ and $c_\AA>0$ is sufficiently large. 
While~\cite{affkmp} requires $c_\AA>1/4$, one can adapt~\cite{os2014} to see that $c_\AA > c_{\dlo}/4$ is sufficient to ensure that the left-hand defines a (nonlinear) discrete bijection, where $c_{\dlo}\in[1/2,1)$ denotes the contraction constant of $1/2+\dlo$ in the $\slo^{-1}$-induced norm.
We stress, however, that our Uzawa-type iteration does not involve any assumption on $c_\AA>0$.\qed
\end{remark}

\subsection{Continuous Uzawa iteration}
The starting point for our analysis is the following iterative solution of the Johnson--N\'ed\'elec formulation~\eqref{eq:johnson-nedelec}. We stress that each step of the algorithm requires only the solution of two linear equations.

\begin{algorithm}\label{algorithm:uzawa:continuous}
{\bfseries Input:} Let $\alpha > 0$ and $u^{(0)}\in H^1(\Omega)$.\\
{\bfseries Uzawa iteration:} For all $j=1,2,3,\dots$, iterate the following steps~{\rm[i]--[iii]}:
\begin{itemize}
\item[{\rm[i]}] Solve $\slo\phi^{(j)} = (\dlo-1/2)(\gamma_0 u^{(j-1)} - u_0)$ for $\phi^{(j)}\in H^{-1/2}(\Gamma)$.
\item[{\rm[ii]}] Solve $\RR w^{(j)} = f - \AA u^{(j-1)} + \gamma_0'(\phi_0+\phi^{(j)})$ for $w^{(j)}\in H^1(\Omega)$.
\item[{\rm[iii]}] Define $u^{(j)} := u^{(j-1)} + \alpha\,w^{(j)} \in H^1(\Omega)$.
\end{itemize}
{\bfseries Output:} Sequence $(u^{(j)},\phi^{(j)})_{j\in\N}$ in $H^1(\Omega)\times H^{-1/2}(\Gamma)$.
\end{algorithm}

\begin{proposition}\label{prop:uzawa:continuous}
Suppose that $\alpha>0$ is sufficiently small. Then, for an arbitrary initial guess $u^{(0)}\in H^1(\Omega)$, the sequence $(u^{(j)},\phi^{(j)})_{j\in\N}$ from Algorithm~\ref{algorithm:uzawa:continuous} converges linearly to the solution $(u,\phi) \in H^1(\Omega)\times H^{-1/2}(\Gamma)$ of the Johnson--N\'ed\'elec formulation~\eqref{eq:johnson-nedelec}, i.e., there exist $C>0$ and $0<q<1$ such that, for all $j\in\N$ and all $n\in\N_0$ it holds that
\begin{align}\label{eq:uzawa:continuous}
 C^{-1}\norm{\phi-\phi^{(j+n+1)}}{H^{-1/2}(\Gamma)}
 \le \norm{u-u^{(j+n)}}{H^1(\Omega)}
 &\le q^n\,\norm{u-u^{(j)}}{H^1(\Omega)}.
\end{align}
\end{proposition}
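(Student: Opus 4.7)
My plan is to recognize Algorithm~\ref{algorithm:uzawa:continuous} as a preconditioned Richardson iteration for a reduced (Schur-complement) problem in the variable $u$ alone. Substituting step [i] into step [ii] and using that the exact solution satisfies $\AA u = f + \gamma_0'(\phi_0 + \phi)$ with $\slo\phi = (\dlo-\tfrac12)(\gamma_0 u - u_0)$, a direct calculation yields
\begin{align*}
 \Riesz w^{(j)} = F^\ast - \mathcal{B} u^{(j-1)}
 \quad\text{and thus}\quad
 u^{(j)} = u^{(j-1)} + \alpha\,\Riesz^{-1}\bigl(F^\ast - \mathcal{B} u^{(j-1)}\bigr),
\end{align*}
where $\mathcal{B} u := \AA u - \gamma_0'\slo^{-1}(\dlo-\tfrac12)\gamma_0 u$ and the data $f,u_0,\phi_0$ are absorbed into $F^\ast$. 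Crucially, the exact solution satisfies $\mathcal{B} u = F^\ast$.

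The main step is then to show that $\mathcal{B}\colon H^1(\Omega)\to\widetilde H^{-1}(\Omega)$ is strongly monotone and Lipschitz continuous. Lipschitz continuity follows immediately from that of $\AA$ together with boundedness of $\slo^{-1}$, $\dlo$, and $\gamma_0$. For monotonicity, I compute
\begin{align*}
 \ip{\mathcal{B} u - \mathcal{B} v}{u-v}_\Omega
 = \ip{\AA u - \AA v}{u-v}_\Omega
 + \ip{\slo^{-1}(\tfrac12-\dlo)\gamma_0(u-v)}{\gamma_0(u-v)}_\Gamma.
\end{align*}
The second summand is non-negative, because $\slo^{-1}(\tfrac12-\dlo)$ is (up to sign) the exterior Steklov--Poincar\'e operator: applied to $\gamma_0 w$ it returns $-\partial_{\normal}w^{\mathrm{ext}}$ of the harmonic extension to $\Omc$ that decays at infinity, so integration by parts on $\Omc$ identifies its bilinear form with the Dirichlet energy $\norm{\nabla w^{\mathrm{ext}}}{\Omc}^2$. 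Combined with the strong semi-monotonicity of $\AA$, this gives a lower bound $c_\AA\norm{\nabla(u-v)}{\Omega}^2 + c_S\norm{\gamma_0(u-v)}{H^{1/2}(\Gamma)}^2$, and the norm equivalence $\norm{v}{H^1(\Omega)}^2 \lesssim \norm{\nabla v}{\Omega}^2 + \norm{\gamma_0 v}{L^2(\Gamma)}^2$ (Poincar\'e--Friedrichs with boundary term) then yields strong $H^1$-monotonicity of $\mathcal{B}$.

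Given this, the right-hand inequality in~\eqref{eq:uzawa:continuous} is standard: writing $e^{(j)} := u - u^{(j)}$, the identity $e^{(j)} = e^{(j-1)} - \alpha\Riesz^{-1}(\mathcal{B} u - \mathcal{B} u^{(j-1)})$ and the fact that $\ip{\Riesz^{-1}g}{v}_{H^1(\Omega)} = \ip{g}{v}_\Omega$ give
\begin{align*}
 \norm{e^{(j)}}{H^1(\Omega)}^2 \le \bigl(1 - 2\alpha c_\mathcal{B} + \alpha^2 C_\mathcal{B}^2\bigr)\norm{e^{(j-1)}}{H^1(\Omega)}^2,
\end{align*}
which contracts for $0 < \alpha < 2c_\mathcal{B}/C_\mathcal{B}^2$. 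Iterating $n$ times yields $q = (1 - 2\alpha c_\mathcal{B} + \alpha^2 C_\mathcal{B}^2)^{1/2}$. For the left-hand inequality, step~[i] gives $\slo(\phi-\phi^{(j+n+1)}) = (\dlo-\tfrac12)\gamma_0(u-u^{(j+n)})$, so ellipticity of $\slo$ together with boundedness of $\dlo-\tfrac12$ and $\gamma_0$ yields $\normHmeh{\phi-\phi^{(j+n+1)}}{\Gamma} \lesssim \norm{u-u^{(j+n)}}{H^1(\Omega)}$.

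The main obstacle will be the positivity argument for the exterior Steklov--Poincar\'e term in $d=2$, where constants lie near the kernel and the assumption $\diam(\Omega)<1$ is exactly what guarantees ellipticity of $\slo$ on $H^{-1/2}(\Gamma)$; the $d=2$ radiation condition with the unknown constant $C_{\mathrm{rad}}$ must be compatible with the weakly-singular formulation used here. Once this is settled, everything else reduces to routine preconditioned Richardson and standard mapping properties of the boundary integral operators.
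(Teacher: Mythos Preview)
Your proposal is correct and follows essentially the same route as the paper: both recognize the iteration as preconditioned Richardson for the Schur-complement operator $\LL=\AA-\gamma_0'\slo^{-1}(\dlo-\tfrac12)\gamma_0$, establish that $\LL$ is Lipschitz and strongly monotone (the latter via the exterior Steklov--Poincar\'e operator combined with the semi-monotonicity of $\AA$ and a Poincar\'e--Friedrichs-type norm equivalence), and then run the standard contraction argument $\norm{e^{(j)}}{H^1}^2\le(1-2\alpha c_\LL+\alpha^2C_\LL^2)\norm{e^{(j-1)}}{H^1}^2$; the left inequality is handled identically via the mapping properties of $\slo^{-1}$ and $\dlo$.

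The only noteworthy difference is your justification of Steklov--Poincar\'e ellipticity: you argue via the exterior Dirichlet energy $\norm{\nabla w^{\mathrm{ext}}}{\Omega^c}^2$, which is clean in $d\ge3$ but, as you correctly flag, is delicate in $d=2$ because the harmonic extension with the logarithmic radiation condition need not have finite Dirichlet energy (constants are the issue). The paper bypasses this by invoking the algebraic identity $\stpo=-\slo^{-1}(\dlo-\tfrac12)=\hyp+(\tfrac12-\dlo')\slo^{-1}(\tfrac12-\dlo)$ and citing the ellipticity result from \cite{cs1995}, which holds on all of $H^{1/2}(\Gamma)$ precisely under the assumption $\diam(\Omega)<1$; this is the cleanest way to close the $2$D gap you anticipated.
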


\begin{proof}
The proof is split into four steps.

{\bf Step 1.} Recall from~\eqref{eq:johnson-nedelec} that $\slo\phi = (\dlo-1/2)(\gamma_0 u - u_0)$. From the mapping properties of $\slo$, $\dlo$, and $\gamma_0$, it follows that
\begin{align*}
 \norm{\phi-\phi^{(j)}}{H^{-1/2}(\Gamma)}
 \simeq \norm{\slo\phi-\slo\phi^{(j)}}{H^{1/2}(\Gamma)}
 &= \norm{(\dlo-1/2)\gamma_0(u-u^{(j-1)})}{H^{1/2}(\Gamma)}
 \\&
 \lesssim \norm{u-u^{(j-1)}}{H^1(\Omega)}
\end{align*}
for all $j\in\N$. This proves the first estimate of~\eqref{eq:uzawa:continuous}.

{\bf Step 2.} To prove the second estimate of~\eqref{eq:uzawa:continuous}, recall  from~\eqref{eq:johnson-nedelec} that $f=\AA u - \gamma_0'(\phi_0+\phi)$. Hence,
\begin{align*}
 \RR w^{(j)} = f - \AA u^{(j-1)} + \gamma_0'(\phi_0 + \phi^{(j)})
 = \AA u - \AA u^{(j-1)} - \gamma_0'(\phi - \phi^{(j)}).
\end{align*}
With $\slo(\phi-\phi^{(j)}) = (\dlo-1/2)\gamma_0(u-u^{(j-1)})$ and $\LL := \AA - \gamma_0' \slo^{-1} (\dlo-1/2) \gamma_0$, it follows that
\begin{align*}
 \RR w^{(j)} = \LL u - \LL u^{(j-1)}.
\end{align*}
Together with $u^{(j)} = u^{(j-1)} + \alpha w^{(j)}$, this proves
\begin{align}\label{dp:continuous}
 u - u^{(j)} = u - u^{(j-1)} - \alpha w^{(j)} = (1 - \alpha \RR^{-1} \LL) u - (1 - \alpha \RR^{-1} \LL) u^{(j-1)}.
\end{align}
Hence, it only remains to prove that the operator $(1 - \alpha \RR^{-1} \LL):H^1(\Omega)\to H^1(\Omega)$ is a contraction.

{\bf Step~3.} We show that the
operator $\LL:H^1(\Omega)\to\H^{-1}(\Omega)$ is strongly monotone and Lipschitz continuous, i.e., there exist $c_\LL, C_\LL>0$ such that, for all $v,w\in H^1(\Omega)$,
\begin{align*}
 c_\LL \norm{w-v}{H^1(\Omega)}^2 \leq \ip{\LL w-\LL v}{w-v}_\Omega 
 \text{ as well as }
 \norm{\LL w-\LL v}{\H^{-1}(\Omega)} \le C_\LL \,\norm{w-v}{H^1(\Omega)}.
\end{align*}
First, Lipschitz continuity follows from Lipschitz continuity of $\FE$ and boundedness of the trace operator $\gamma_0$
and the boundary integral operators $\slo$, $\dlo$.
Second, using the hypersingular integral operator $\hyp u(\xx) := -\partial_{\normal(\xx)} \int_\Gamma
\partial_{\normal(\yy)} G(\xx-\yy) \,d\Gamma(\yy)$, one can prove the identity
\begin{align*}
  \stpo:=-\slo^{-1}(\dlo-\tfrac12) = \hyp + (\tfrac12-\dlo')\slo^{-1}(\tfrac12-\dlo);
\end{align*}
see, e.g.,~\cite[Section~6.6]{bigObook}. This so-called exterior Steklov-Poincar\'e
operator is elliptic~\cite[Lemma~4]{cs1995} in $H^{1/2}(\Gamma)$. 
Together with strong semi-monotonicity of $\FE$, we see
\begin{align*}
  c_\FE \norm{\nabla (u-v)}{\Omega}^2 + c \norm{\gamma_0 (u-v)}{H^{1/2}(\Gamma)}^2 \leq 
  \ip{\FE u-\FE v}{u-v}_\Omega + \ip{\stpo\gamma_0(u-v)}{\gamma_0(u-v)}_\Gamma
\end{align*}
for all $u,v\in H^1(\Omega)$. Since the left-hand side defines an equivalent norm on $H^1(\Omega)$, this proves strong
monotonicity of $\LL$.

{\bf Step~4.} We finally show that the operator $(1 - \alpha \RR^{-1} \LL)$ is a contraction:
Let $0<\alpha<2c_\FE/C_\FE^2$. Let $w,v\in H^1(\Omega)$. Then,
\begin{align*}
  &\norm{(1-\alpha\Riesz^{-1}\LL)w-(1-\alpha\Riesz^{-1}\LL)v}{H^1(\Omega)}^2 
  \\&\quad
  = \norm{w-v}{H^1(\Omega)}^2 - 2\alpha \ip{\Riesz(\Riesz^{-1}\LL w-\Riesz^{-1}\LL v)}{w-v}_{\Omega}
  + \alpha^2 \norm{\Riesz^{-1}\LL w-\Riesz^{-1}\LL v}{H^1(\Omega)}^2
  \\&\quad
  = \norm{w-v}{H^1(\Omega)}^2 - 2\alpha \ip{\LL w-\LL v}{w-v}_\Omega + \alpha^2 \norm{\LL w-\LL v}{\H^{-1}(\Omega)}^2
  \\&\quad
  \le \norm{w-v}{H^1(\Omega)}^2 \,\big(1-2\alpha c_\LL + \alpha^2 C_\LL^2\big).
\end{align*}
By choice of $\alpha$, it holds that $q := 1-2\alpha c_\LL + \alpha^2 C_\LL^2 < 1$. In particular, it follows from~\eqref{dp:continuous} that
$\norm{u - u^{(j)}}{H^1(\Omega)} \le q\,\norm{u - u^{(j-1)}}{H^1(\Omega)}$, and an induction argument concludes~\eqref{eq:uzawa:continuous}.
\end{proof}

\section{Discrete Uzawa iteration}
\label{section:discrete:Uzawa}
\def\qref{q_{\rm ref}}%
\def\Ci{C_{\rm[i]}}%
\def\Cii{C_{\rm[ii]}}%

\subsection{Triangulation and mesh-refinement}\label{sec:refine}
Throughout, we assume that $\TT_\bullet$ is a conforming triangulation of $\Omega$ into compact non-degenerate simplices (of dimension $d$). By $\TT_\bullet|_\Gamma$, we denote the induced conforming triangulation of $\Gamma$ into plane non-degenerate surface simplices (of dimension $d-1$). 
For a simplex $T\in\TT_\bullet$, let $|T|$ denote its $d$-dimensional volume and let $\diam(T)$ be the Euclidean diameter.
The triangulation $\TT_\bullet$ is $\sigma$-shape regular if 
\begin{align}\label{eq:shape_regular}
 \max_{T\in\TT_\bullet} \frac{\diam(T)}{|T|^{1/d}} \le \sigma < \infty.
\end{align}
Note that this implies $|T|^{1/d} \le \diam(T) \le \sigma\,|T|^{1/d}$ for all $T\in\TT_\bullet$. Note that $\sigma$-shape regularity of $\TT_\bullet$ implies also the $\sigma$-shape regularity of $\TT_\bullet|_\Gamma$ in the sense of $|E|^{1/(d-1)}\le \diam(E) \le \sigma\,|E|^{1/(d-1)}$ for all facets $E\in\TT_\bullet|_\Gamma$, where $|E|$ denotes the $(d-1)$-dimensional surface area.

We suppose a fixed refinement strategy $\refine(\cdot)$, where $\TT_\circ = \refine(\TT_\bullet,\MM_\bullet)$ is the coarsest refinement of $\TT_\bullet$ such that all marked elements $\MM_\bullet\subseteq\TT_\bullet$ have been refined, i.e., $\MM_\bullet\subseteq\TT_\bullet\backslash\TT_\circ$. We suppose that
\begin{itemize}
\item each element $T\in\TT_\bullet$ is the union of its sons, i.e., $T = \bigcup\set{T'\in\TT_\circ}{T'\subseteq T}$;
\item there exists $0<\qref<1$ such that $|T'| \le \qref\,|T|$ for all $T\in\TT_\bullet\backslash\TT_\circ$ and all $T'\in\TT_\circ$ with $T'\subsetneqq T$, i.e., sons are uniformly smaller than their fathers.
\end{itemize}
We write $\TT_\circ\in\refine(\TT_\bullet)$, if there exist $n\in\N_0$, triangulations $\TT_0,\dots,\TT_n$, and marked elements $\MM_j\subseteq\TT_j$ such that $\TT_\bullet = \TT_0$, $\TT_{j+1}=\refine(\TT_j,\MM_j)$ for all $j=0,\dots,n-1$, and $\TT_\circ=\TT_n$. In particular, it holds that $\TT_\bullet\in\refine(\TT_\bullet)$. Finally, we suppose that $\refine(\cdot)$ guarantees uniform $\sigma$-shape regularity, i.e., all $\TT_\circ\in\refine(\TT_\bullet)$ are $\sigma$-shape regular~\eqref{eq:shape_regular}, where $\sigma>0$ depends only on $\TT_\bullet$.

One possible choice for $\refine(\cdot)$ is newest vertex bisection~\cite{kpp,stevenson}, where $\qref = 2^{-1/d}$.

\subsection{Discrete Uzawa iteration}
The discrete Uzawa iteration approximates $\phi^{(j)}\approx\phi^{(j)}_j\in\PP^{p-1}(\TT_j|_\Gamma)$ 
and $u^{(j)}\approx u^{(j)}_j\in\SS^p(\TT_j)$, where $\TT_j \in \refine(\TT_{j-1})$ for all $j\in\N$.
To formulate the basic idea, 
\begin{itemize}
\item let $\phi_\star^{(j)}\in H^{-1/2}(\Gamma)$ solve $\slo\phi_\star^{(j)} = (\dlo-1/2)(\gamma_0 u^{(j-1)}_{j-1} - u_0)$,
\item let $w^{(j)}_\star\in H^1(\Omega)$ solve $\RR w^{(j)}_\star = f - \AA u^{(j-1)}_{j-1} + \gamma_0'(\phi_0+\phi^{(j)}_j)$;
\end{itemize}
see also Algorithm~\ref{algorithm:uzawa:continuous} with the corresponding definition of $\phi^{(j)}$ resp.\ $w^{(j)}$.
With this notation, a discrete discrete Uzawa iteration reads as follows, where the precise computation of $\phi_j^{(j)}$ in step~[i] and $w_j^{(j)}$ in step~[ii] is the topic of Section~\ref{section:adaptive}--\ref{section:uzawa_step_ii}.

\begin{algorithm}\label{algorithm:uzawa:ideal}
{\bfseries Input:} Parameter $\alpha > 0$, initial triangulation $\TT_0$, initial guess $u^{(0)}_0\in \XX_0$, constants $\Ci,\Cii>0$ and $0<\gamma<1$.\\
{\bfseries Discrete Uzawa iteration:} For all $j=1,2,3,\dots$, iterate the following steps~{\rm[i]--[iii]}:
\begin{itemize}
\item[{\rm[i]}] Determine $\TT^{\rm[i]}_{j} \in \refine(\TT_{j-1})$ as well as some $\phi_j^{(j)}\in\PP^{p-1}(\TT^{\rm[i]}_{j}|_\Gamma)$ such that
\begin{align}\label{eq1:uzawa:ideal}
 \norm{\phi_\star^{(j)} - \phi_j^{(j)}}{H^{-1/2}(\Gamma)} 
 \le \Ci \, \gamma^j.
\end{align}
\item[{\rm[ii]}] Determine a triangulation $\TT^{\rm[ii]}_{j} \in \refine(\TT_j^{\rm[i]})$ and some $w_j^{(j)}\in\SS^{p}(\TT^{\rm[ii]}_{j})$ such that 
\begin{align}\label{eq2:uzawa:ideal}
 \norm{w^{(j)}_\star - w^{(j)}_j}{H^1(\Omega)}
 \le\Cii\,\gamma^j
\end{align}
\item[{\rm[iii]}] Define $\TT_j := \TT^{\rm[ii]}_{j}$ and $u_j^{(j)}:=u_{j-1}^{(j-1)}+\alpha w_j^{(j)}\in\SS^p(\TT_j)$.
\end{itemize}
\end{algorithm}

The following theorem together with the realization of step~{\rm[i]} and step~{\rm[ii]} which are presented below, is the main result of the present work.
\begin{theorem}\label{prop:uzawa:ideal}
Suppose that $\alpha>0$ is sufficiently small in the sense of Proposition~\ref{prop:uzawa:continuous}. 
Let $\Ci,\Cii>0$ as well as $0<\gamma<1$. Let $\TT_0$ be an arbitrary triangulation of $\Omega$.
Then, for an arbitrary discrete initial guess $u^{(0)}_0\in \SS^p(\TT_0)$, the sequence $(u^{(j)}_j,\phi^{(j)}_j)_{j\in\N}$ from Algorithm~\ref{algorithm:uzawa:ideal} converges to the solution $(u,\phi) \in H^1(\Omega)\times H^{-1/2}(\Gamma)$ of the Johnson--N\'ed\'elec formulation~\eqref{eq:johnson-nedelec}, and it holds
\begin{align}\label{eq:uzawa:ideal}
 \norm{\phi-\phi^{(j)}_j}{H^{-1/2}(\Gamma)} + \norm{u-u^{(j)}_j}{H^1(\Omega)} \le C\,\kappa^j
 \quad\text{for all }j\in\N,
\end{align}
where $C>0$ and $0<\kappa<1$ depend only on $\alpha$, $\Ci$, $\Cii$, $\norm{u-u^{(0)}_0}{H^1(\Omega)}$ and $\gamma$. Moreover, if $0<q<1$ is the contraction constant from Proposition~\ref{prop:uzawa:continuous} and $\gamma>q$, then $\kappa=\gamma$.
\end{theorem}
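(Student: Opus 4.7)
The plan is to reduce one step of Algorithm~\ref{algorithm:uzawa:ideal} to one step of the continuous Uzawa iteration from Algorithm~\ref{algorithm:uzawa:continuous}, applied with $u_{j-1}^{(j-1)}$ as the (new) initial guess, plus a controllable perturbation coming from the inexact discrete solves~\eqref{eq1:uzawa:ideal}--\eqref{eq2:uzawa:ideal}. I would introduce the auxiliary iterate $u_\star^{(j)} := u_{j-1}^{(j-1)} + \alpha\,w_\star^{(j)}$ and observe that, by construction of $\phi_\star^{(j)}$ and $w_\star^{(j)}$, the pair $(\phi_\star^{(j)},u_\star^{(j)})$ is precisely the pair $(\phi^{(1)},u^{(1)})$ produced by the continuous Algorithm~\ref{algorithm:uzawa:continuous} started from $u^{(0)} := u_{j-1}^{(j-1)}$. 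Proposition~\ref{prop:uzawa:continuous} (in the form derived at the end of its proof) then yields the one-step contraction $\norm{u-u_\star^{(j)}}{H^1(\Omega)} \le q\,\norm{u-u_{j-1}^{(j-1)}}{H^1(\Omega)}$.

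Next, I would use step~[iii] together with~\eqref{eq2:uzawa:ideal} to control the discretization error: $\norm{u_\star^{(j)} - u_j^{(j)}}{H^1(\Omega)} = \alpha\,\norm{w_\star^{(j)}-w_j^{(j)}}{H^1(\Omega)} \le \alpha\Cii\,\gamma^j$. A triangle inequality therefore produces the perturbed geometric recursion
\[
 a_j := \norm{u-u_j^{(j)}}{H^1(\Omega)}
 \le q\,a_{j-1} + \alpha\Cii\,\gamma^j
 \quad\text{for all } j\in\N,
\]
starting from $a_0 = \norm{u-u_0^{(0)}}{H^1(\Omega)}$. Iterating gives the explicit bound $a_j \le q^j a_0 + \alpha\Cii \sum_{k=1}^j q^{j-k}\gamma^k$. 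In the case $\gamma>q$ the geometric sum equals $\gamma^j\sum_{i=0}^{j-1}(q/\gamma)^i \le \gamma^j/(1-q/\gamma)$, so $\kappa=\gamma$ works as claimed; in the case $\gamma\le q$ one absorbs the sum into any fixed $\kappa\in(\max\{q,\gamma\},1)$ at the cost of a larger multiplicative constant.

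For the flux, Step~1 of the proof of Proposition~\ref{prop:uzawa:continuous}, applied to the continuous iteration restarted at $u_{j-1}^{(j-1)}$, yields $\norm{\phi-\phi_\star^{(j)}}{H^{-1/2}(\Gamma)} \lesssim \norm{u-u_{j-1}^{(j-1)}}{H^1(\Omega)} = a_{j-1}$. Combining this with~\eqref{eq1:uzawa:ideal} and the already-established bound $a_{j-1}\le C\kappa^{j-1}$ via a triangle inequality gives $\norm{\phi-\phi_j^{(j)}}{H^{-1/2}(\Gamma)} \lesssim \kappa^{j-1} + \gamma^j \lesssim \kappa^j$. Together with the bound on $a_j$ this yields~\eqref{eq:uzawa:ideal}. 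The only genuinely delicate point I anticipate is the borderline case $\gamma=q$ (or $\gamma$ very close to $q$) in the geometric sum, where a factor $j$ appears and has to be absorbed by a slight enlargement of $\kappa$; away from this degenerate case, the whole argument is a standard Banach-fixed-point-with-perturbation calculation combined with the one-step contraction inherited from Proposition~\ref{prop:uzawa:continuous}.
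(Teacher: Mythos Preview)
Your overall strategy---restart the continuous iteration at $u_{j-1}^{(j-1)}$, invoke the one-step contraction, and treat the discrete solves as perturbations---is exactly the right one and matches the paper's approach. However, there is a genuine gap in your identification of $u_\star^{(j)}$ with the continuous Uzawa iterate.

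Look again at the definition of $w_\star^{(j)}$ preceding Algorithm~\ref{algorithm:uzawa:ideal}: it solves $\RR w_\star^{(j)} = f - \AA u_{j-1}^{(j-1)} + \gamma_0'(\phi_0+\phi_j^{(j)})$, i.e.\ the right-hand side uses the \emph{discrete} flux $\phi_j^{(j)}$, not the continuous $\phi_\star^{(j)}$. Consequently $(\phi_\star^{(j)},u_\star^{(j)})$ is \emph{not} the pair $(\phi^{(1)},u^{(1)})$ produced by Algorithm~\ref{algorithm:uzawa:continuous} started from $u_{j-1}^{(j-1)}$, and the one-step bound $\norm{u-u_\star^{(j)}}{H^1(\Omega)}\le q\,\norm{u-u_{j-1}^{(j-1)}}{H^1(\Omega)}$ does not follow from Proposition~\ref{prop:uzawa:continuous} as written. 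Your recursion $a_j\le q\,a_{j-1}+\alpha\Cii\gamma^j$ is therefore not justified; the bound~\eqref{eq1:uzawa:ideal} plays no role in your argument, which should already look suspicious.

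The fix is short. Either introduce a second auxiliary $\widetilde w^{(j)}$ solving $\RR\widetilde w^{(j)} = f - \AA u_{j-1}^{(j-1)} + \gamma_0'(\phi_0+\phi_\star^{(j)})$ (this \emph{is} the continuous step), and observe $\RR(w_\star^{(j)}-\widetilde w^{(j)}) = \gamma_0'(\phi_j^{(j)}-\phi_\star^{(j)})$, whence $\norm{w_\star^{(j)}-\widetilde w^{(j)}}{H^1(\Omega)}\le\norm{\phi_\star^{(j)}-\phi_j^{(j)}}{H^{-1/2}(\Gamma)}\le\Ci\gamma^j$ by~\eqref{eq1:uzawa:ideal}; or, as the paper does, expand $\RR w_\star^{(j)}$ directly and split off the term $\gamma_0'(\phi_\star^{(j)}-\phi_j^{(j)})$ before passing to $\LL$. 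Either way one arrives at
\[
 a_j \le q\,a_{j-1} + \alpha(\Ci+\Cii)\,\gamma^j,
\]
after which your treatment of the geometric sum and of the flux error is correct and coincides with the paper's.
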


\begin{proof}
Recall from~\eqref{eq:johnson-nedelec} that $f = \AA u - \gamma_0'(\phi_0+\phi)$. Therefore, it follows that
\begin{align*}
 \RR w_\star^{(j)} = f-\AA u_{j-1}^{(j-1)}+\gamma_0'(\phi_0+\phi_j^{(j)})
 &= \AA u - \AA u_{j-1}^{(j-1)} - \gamma_0'(\phi-\phi_j^{(j)})
 \\&
 = \AA u - \AA u_{j-1}^{(j-1)} - \gamma_0'(\phi-\phi_\star^{(j)}) - \gamma_0'(\phi_\star^{(j)}-\phi_j^{(j)}).
\end{align*}
With $\slo(\phi-\phi_\star^{(j)}) = (\dlo-1/2)\gamma_0(u-u_{j-1}^{(j-1)})$
and $\LL := \AA-\gamma_0'\slo^{-1}(\dlo-1/2)\gamma_0$, it holds that
\begin{align*}
 \AA u - \AA u_{j-1}^{(j-1)} - \gamma_0'(\phi-\phi_\star^{(j)})
 = \LL u - \LL u_{j-1}^{(j-1)}.
\end{align*}
The combination of these two observations yields
\begin{align*}
 \RR w_\star^{(j)} 
 = \LL u - \LL u_{j-1}^{(j-1)}  - \gamma_0'(\phi_\star^{(j)}-\phi_j^{(j)}).
\end{align*}
Together with $u_j^{(j)} = u_{j-1}^{(j-1)} + \alpha\,w_j^{(j)}$, we hence obtain that
\begin{align*}
 u - u_j^{(j)}
 &= u - u_{j-1}^{(j-1)} - \alpha w_\star^{(j)} + \alpha (w_\star^{(j)}-w_j^{(j)})
 \\&= (1-\alpha\RR^{-1}\LL)u - (1-\alpha\RR^{-1}\LL)u_{j-1}^{(j-1)}
 + \alpha\RR^{-1}\gamma_0'(\phi_\star^{(j)}-\phi_j^{(j)})
 + \alpha (w_\star^{(j)}-w_j^{(j)}).
\end{align*}
As in the proof of Proposition~\ref{prop:uzawa:continuous}, it holds that
\begin{align*}
 \norm{u - u_j^{(j)}}{H^1(\Omega)}
 \le q\,\norm{u-u_{j-1}^{(j-1)}}{H^1(\Omega)}
 + \alpha\norm{\RR^{-1}\gamma_0'(\phi_\star^{(j)}-\phi_j^{(j)})}{H^1(\Omega)}
 + \alpha\,\norm{w_\star^{(j)}-w_j^{(j)}}{H^1(\Omega)}.
\end{align*}
Since $\RR^{-1}$ is an isometry and $\norm{\gamma_0(\cdot)}{H^{1/2}(\Gamma)}\le\norm{\cdot}{H^1(\Omega)}$,
it follows that
$\|\RR^{-1}\gamma_0':H^{-1/2}(\Gamma)\to H^1(\Omega)\|\le1$ for the operator norm. Together with~\eqref{eq1:uzawa:ideal}--\eqref{eq2:uzawa:ideal}, we obtain that
\begin{eqnarray*}
 \norm{u - u_j^{(j)}}{H^1(\Omega)}
 &\le&
 q\,\norm{u-u_{j-1}^{(j-1)}}{H^1(\Omega)}
 + \alpha \norm{\phi_\star^{(j)}-\phi_{j}^{(j)}}{H^{-1/2}(\Gamma)}
 + \alpha\,\norm{w_\star^{(j)}-w_j^{(j)}}{H^1(\Omega)}
 \\&
 \le&
 q\,\norm{u-u_{j-1}^{(j-1)}}{H^1(\Omega)}
 + \alpha(\Ci+\Cii)\,\gamma^j.
\end{eqnarray*}
Let $1 > \kappa > \widetilde\kappa := \max\{q,\gamma\}$. Arguing by induction on $j$, we prove that
\begin{align}\label{eq:sum}
 \begin{split}
 \norm{u - u_j^{(j)}}{H^1(\Omega)}
 &\le q^j\,\norm{u-u_{0}^{(0)}}{H^1(\Omega)}
 + \alpha(\Ci+\Cii)\,\sum_{\ell=0}^{j-1}q^\ell\gamma^{j-\ell}
 \\&
 \le \widetilde\kappa^{\,j}\,\norm{u-u_{0}^{(0)}}{H^1(\Omega)}
 + \alpha(\Ci+\Cii)\,j\,\widetilde\kappa^{\,j}.
 \end{split}
\end{align}
Note that $j\,\widetilde\kappa^{\,j} \lesssim \kappa^j$, where the hidden constant depends only on $\widetilde\kappa$ and $\kappa$. This concludes convergence~\eqref{eq:uzawa:ideal} for the FEM part, i.e., $\norm{u - u_j^{(j)}}{H^1(\Omega)} \lesssim \kappa^j$.

If $q<\gamma$, it holds that
\begin{align*} 
 \sum_{\ell=0}^{j-1} q^\ell \gamma^{j-\ell}
 = \gamma^j \sum_{\ell=0}^{j-1} (q/\gamma)^\ell
 = \gamma^j \, \frac{1-(q/\gamma)^j}{1-(q/\gamma)}.
\end{align*}
Using this estimate in~\eqref{eq:sum}, we prove convergence~\eqref{eq:uzawa:ideal} for the FEM part with $\kappa = \gamma > q$.

The mapping properties of the boundary integral operators reveal that
\begin{align*}
 \norm{\phi-\phi^{(j)}_\star}{H^{-1/2}(\Gamma)}
 \simeq \norm{\slo\phi-\slo\phi^{(j)}_\star}{H^{1/2}(\Gamma)}
 &= \norm{(\dlo-1/2)\gamma_0(u-u^{(j-1)}_{j-1})}{H^{1/2}(\Gamma)}
 \\&\lesssim \norm{u-u^{(j-1)}_{j-1}}{H^1(\Omega)}.
\end{align*}
Together with~\eqref{eq1:uzawa:ideal}--\eqref{eq2:uzawa:ideal}, we obtain that
\begin{align*}
 \norm{\phi-\phi^{(j)}_j}{H^{-1/2}(\Gamma)}
 &\le \norm{\phi-\phi^{(j)}_\star}{H^{-1/2}(\Gamma)} + \norm{\phi^{(j)}_\star-\phi^{(j)}_j}{H^{-1/2}(\Gamma)}
 \\&
 \lesssim \norm{u-u^{(j-1)}_{j-1}}{H^1(\Omega)} + \norm{\phi^{(j)}_\star-\phi^{(j)}_j}{H^{-1/2}(\Gamma)}
 \\&
 \lesssim \kappa^{j-1} + \gamma^{j}
 \le 2\kappa^{-1}\kappa^j.
\end{align*}
Therefore, convergence~\eqref{eq:uzawa:ideal} for the BEM part follows from the above arguments.
\end{proof}


\def\HH{\mathcal{H}}%
\def\UU{\mathcal{U}}%
\def\edual#1#2{\langle\!\langle#1\,,#2\rangle\!\rangle}%
\def\Cstab{C_{\rm stab}}%
\def\Crel{C_{\rm rel}}%
\def\qred{q_{\rm red}}%
\def\qctr{q_{\rm ctr}}%

\def\kctr{\kappa_{\rm ctr}}%
\def\lctr{\lambda_{\rm ctr}}%

\def\Cstp{C_{\rm stop}}%

\def\cond{\mathrm{cond}}
\def\CPCG{C_\mathrm{PCG}}

\subsection{Adaptivity with inexact PCG solver}\label{section:adaptive}
We will realize step~[i] and step~[ii] of Algorithm~\ref{algorithm:uzawa:ideal} by adaptive mesh-refining strategies which also include the use of the preconditioned conjugate gradient method (PCG). 
To this end, we follow~\cite{axioms} and note that step~[i] and step~[ii] can be covered simultaneously within the following abstract framework. 

Let $\HH$ be a Hilbert space with scalar product $\edual\cdot\cdot$ and corresponding norm $\enorm\cdot$. For each triangulation $\TT'_\bullet$, let $\XX_\bullet$ be an associated discrete subspace of $\HH$. We suppose that $\TT'_\circ\in\refine(\TT'_\bullet)$ implies nestedness $\XX_\bullet\subseteq\XX_\circ$. 
For given $F\in\HH^*$, let $\psi\in\HH$ be the exact solution of $\edual{\psi}\chi = F(\chi)$ for all
$\chi\in\XX$.
For some fixed $\psi\in\HH$, let $\psi_\bullet^\star\in\XX_\bullet$ be the best approximation of $\psi$ in $\XX_\bullet$, i.e., $\psi_\bullet^\star$ solves
\begin{align}\label{eq:cg:weakform}
  \edual{\psi_\bullet^\star}{\chi_\bullet} = F(\chi_\bullet) \quad\text{for all } \chi_\bullet\in\XX_\bullet.
\end{align}
or equivalently
\begin{align}\label{eq:abstract:discrete}
 \edual{\psi-\psi_\bullet^\star}{\chi_\bullet} = 0
 \quad\text{for all }\chi_\bullet\in\XX_\bullet.
\end{align}
Note that this also yields the Pythagoras theorem
\begin{align}\label{eq:abstract:pythagoras}
 \enorm{\psi-\psi_\bullet}^2 = \enorm{\psi-\psi_\bullet^\star}^2 + \enorm{\psi_\bullet^\star-\psi_\bullet}^2
 \ge \enorm{\psi-\psi_\bullet^\star}^2
 \quad\text{for all }\psi_\bullet\in\XX_\bullet.
\end{align}
For each $\psi_\bullet\in\XX_\bullet$ and all $T\in\TT'_\bullet$, we suppose some refinement indicator $\varrho_\bullet(T,\psi_\bullet)\ge0$. We define the corresponding error estimator
\begin{align}\label{eq:abstract:estimator}
 \varrho_\bullet(\psi_\bullet) := \varrho_\bullet(\TT'_\bullet,\psi_\bullet),
 \quad\text{where}\quad
 \varrho_\bullet(\UU_\bullet,\psi_\bullet) := \bigg(\sum_{T\in\UU_\bullet}\varrho_\bullet(T,\psi_\bullet)^2\bigg)^{1/2}
 \text{ for all }\UU_\bullet\subseteq\TT'_\bullet.
\end{align}
We suppose that there are constants $\Cstab,\Crel>0$ and $0<\qred<1$ such that for all $\TT'_\bullet\in\refine(\TT'_0)$ and all $\TT'_\circ\in\refine(\TT'_\bullet)$ as well as all $\psi_\bullet\in\XX_\bullet$ and $\psi_\circ\in\XX_\circ$, the following properties~\eqref{axiom:stability}--\eqref{axiom:reliability} are satisfied:
\begin{enumerate}
\renewcommand{\theenumi}{A\arabic{enumi}}
\bf\item\label{axiom:stability}\rm%
{\it\bfseries Stability on non-refined elements:} $|\varrho_\circ(\TT'_\circ\cap\TT'_\bullet,\psi_\circ) - \varrho_\bullet(\TT'_\circ\cap\TT'_\bullet,\psi_\bullet)| \le \Cstab\,\enorm{\psi_\circ-\psi_\bullet}$.
\bf\item\label{axiom:reduction}\rm%
{\it\bfseries Reduction on refined elements:} $\varrho_\circ(\TT'_\circ\backslash\TT'_\bullet,\psi_\bullet)^2 \le \qred \varrho_\bullet(\TT'_\bullet\backslash\TT'_\circ,\psi_\bullet)^2$.
\bf\item\label{axiom:reliability}\rm%
{\it\bfseries Reliability for exact best approximation:} $\enorm{\psi-\psi_\bullet^\star} \le \Crel\,\varrho_\bullet(\psi_\bullet^\star)$.
\end{enumerate}

Let $\{\xi_1,\dots,\xi_N\}\subseteq\XX_\bullet$ denote a basis of $\XX_\bullet$. Then, ~\eqref{eq:cg:weakform} is
equivalent to solving 
\begin{align}\label{eq:cg:matrixform}
  S x^\star = b
  \quad\text{with }
  S = \big(\edual{\xi_k}{\xi_j}\big)_{j,k=1\dots,N}\in\R^{N\times N}_{\rm sym}
  \quad\text{and}\quad
  b = \big(F(\xi_j)\big)_{j=1,\dots,N}\in\R^N
\end{align}
in the sense that $\psi_\bullet^\star = \sum_{j=1}^N x_j^\star \xi_j$.
Since $S$ is symmetric and positive definite, we use PCG as inexact solver and
replace the exact solution $x^\star\in\R^N$ by some PCG iteration, see~\cite{saad03,matcomp}.
To this end, we consider
\begin{align}\label{eq:cg:precondform}
  P^{-1}S x^\star = P^{-1}b
\end{align}
instead of~\eqref{eq:cg:matrixform}, where $P\in\R^{N\times N}_{\rm sym}$ is a symmetric and positive definite matrix which is spectrally equivalent to $S$, i.e., 
\begin{align}\label{eq:cg:specequiv}
  c_P x^T Px \leq x^T S x \leq C_P x^T Px \quad\text{for all }x\in\R^N.
\end{align}
We suppose that the constants $c_P,C_P>0$ are independent of $\XX_\bullet$ and call $P$ an optimal preconditioner for
$S$. Then, 
\begin{align}\label{eq:condCG}
  \cond_2(P^{-1/2}SP^{-1/2}) \leq \CPCG,
\end{align}
where $\CPCG$ depends only on $c_P$ and $C_P$, but is independent of $\XX_\bullet$.
We refer to~\cite{xcn09,wuchen06,ffps,ffpsFEMBEMAS} for optimal preconditioners for FEM and BEM on locally refined meshes.

\begin{lemma}[{\cite[Section~11.3 and~11.5]{matcomp}}]\label{prop:cg}
Let $\psi_{\bullet,0}\in \XX_\bullet$ and $x^{(0)}\in\R^N$ with $\psi_{\bullet,0} = \sum_{j=1}^N x^{(0)}_j \xi_j$. For $k=1,\dots,N$, let $x^{(k)}\in\R^N$ be the approximate solution of~\eqref{eq:cg:precondform} after $k$ iterations of the PCG algorithm~\cite[Algorithm~11.5.1]{matcomp} with matrix $S\in\R^{N\times N}_{\rm sym}$, optimal preconditioner $P\in\R^{N\times N}_{\rm sym}$, initial guess $x^{(0)}\in\R^N$, and right-hand side $b\in\R^N$. Let $\psi_{\bullet,k} := \sum_{j=1}^N x^{(k)}_j \xi_j\in\XX_\bullet$ be the corresponding discrete function.
Then, 
\begin{align*}
  \enorm{\psi_\bullet^\star-\psi_{\bullet,k}} &\leq \Big(1-\CPCG^{-1}\Big)^{1/2}\enorm{\psi_\bullet^\star-\psi_{\bullet,k-1}}, \\
  \enorm{\psi_\bullet^\star-\psi_{\bullet,k}} &\leq 2\left( \frac{\sqrt{\CPCG}-1}{\sqrt{\CPCG}+1} \right)^k \enorm{\psi_\bullet^\star-\psi_{\bullet,0}}.
\end{align*}
In particular, given a tolerance $\tau>0$, there exists a constant $K\in\N$ such that
\begin{align*}
  \enorm{\psi_\bullet^\star-\psi_{\bullet,k}} 
  \leq \tau \, \enorm{\psi_\bullet^\star-\psi_{\bullet,0}}
  \quad\text{for all }K \le k \le N.
\end{align*}
The constant $K$ depends only on $C_P$, $c_P$ from~\eqref{eq:cg:specequiv} as well as on $\tau$, but is independent of $\XX_\bullet$.\qed
\end{lemma}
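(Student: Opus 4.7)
The plan is to translate the abstract Galerkin setting into coefficient vectors and invoke the classical convergence theory of the (preconditioned) conjugate gradient method. Since the result is standard and explicitly cited from \cite{matcomp}, the sketch emphasizes the bookkeeping that connects the abstract energy norm to the matrix-level $S$-norm, and the spectral bound to the Chebyshev convergence estimate.

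First, I would identify the energy norm with the $S$-norm on coefficients. If $\chi = \sum_{j=1}^N y_j \xi_j \in \XX_\bullet$ with coefficient vector $y\in\R^N$, then by definition of $S$ in \eqref{eq:cg:matrixform} one has $\enorm{\chi}^2 = y^T S y$. In particular, $\enorm{\psi_\bullet^\star - \psi_{\bullet,k}} = \|x^\star - x^{(k)}\|_S$ where $\|y\|_S := (y^T S y)^{1/2}$. This reduces all claims to statements about the PCG iteration on $\R^N$ measured in the $S$-norm.

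Second, I would pass to the symmetrically preconditioned system $\widetilde S\, \widetilde x = \widetilde b$ with $\widetilde S := P^{-1/2} S P^{-1/2}$, $\widetilde x := P^{1/2}x$, $\widetilde b := P^{-1/2}b$. It is a routine check (see \cite[Sec.~11.5]{matcomp}) that the PCG iterates $x^{(k)}$ for $(S,P)$ correspond, via the isometric change of variables $x \mapsto P^{1/2}x$, to standard CG iterates for $\widetilde S$, and the $S$-norm on the original coefficients corresponds exactly to the $\widetilde S$-norm on the transformed ones. By \eqref{eq:cg:specequiv} and \eqref{eq:condCG}, the spectral condition number of $\widetilde S$ satisfies $\cond_2(\widetilde S) \le \CPCG$, where $\CPCG$ depends only on $c_P, C_P$.

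Third, I would apply the classical CG convergence theorem to $\widetilde S$. Recall that the $k$-th CG iterate is optimal over the affine Krylov subspace in the $\widetilde S$-norm, which translates to a minimum over polynomials $p$ of degree $k$ with $p(0)=1$:
\begin{align*}
\enorm{\psi_\bullet^\star - \psi_{\bullet,k}}
= \min_{\deg p\le k,\,p(0)=1}\enorm{p(\widetilde S)(\psi_\bullet^\star - \psi_{\bullet,0})}
\le \Big(\min_{\deg p\le k,\,p(0)=1}\max_{\lambda\in[\lambda_{\min},\lambda_{\max}]}|p(\lambda)|\Big)\,\enorm{\psi_\bullet^\star - \psi_{\bullet,0}},
\end{align*}
where $0<\lambda_{\min}\le\lambda_{\max}$ denote the spectral bounds of $\widetilde S$, so $\lambda_{\max}/\lambda_{\min}\le\CPCG$. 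The per-step estimate is obtained by the crude choice $p(\lambda)=1-\lambda/\lambda_{\max}$ (equivalently, by comparing one CG step with one step of steepest descent), yielding the contraction factor $(\kappa-1)/(\kappa+1)\le(1-\kappa^{-1})^{1/2}$ with $\kappa=\CPCG$. The sharp $k$-step estimate is the shifted-Chebyshev bound, which gives the factor $2\bigl((\sqrt{\CPCG}-1)/(\sqrt{\CPCG}+1)\bigr)^{k}$; this is the only nontrivial analytic step, but it is textbook material.

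Finally, the existence of $K$ is immediate from the second estimate: choose $K$ as the smallest integer such that $2\bigl((\sqrt{\CPCG}-1)/(\sqrt{\CPCG}+1)\bigr)^{K}\le\tau$. Since $\CPCG$ depends only on $c_P, C_P$, and not on $N$ or on $\XX_\bullet$, the constant $K$ is independent of the discretization. The main (mild) obstacle is really just the Chebyshev polynomial construction, but since the lemma is quoted verbatim from \cite[Sec.~11.3 and~11.5]{matcomp}, I would simply reference the standard proof there.
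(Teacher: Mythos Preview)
Your proposal is correct and follows the standard textbook route; the paper itself gives no proof for this lemma, but simply cites \cite[Sections~11.3 and~11.5]{matcomp} and closes with a \qed. One minor quibble: the polynomial choice $p(\lambda)=1-\lambda/\lambda_{\max}$ only yields the per-step bound for $k=1$ versus $k=0$, since the min--max characterization compares to the \emph{initial} error; the per-step contraction $\enorm{\psi_\bullet^\star-\psi_{\bullet,k}}\le(1-\CPCG^{-1})^{1/2}\enorm{\psi_\bullet^\star-\psi_{\bullet,k-1}}$ for general $k$ really does require the steepest-descent comparison you mention parenthetically (using that the residual $r_{k-1}$ lies in the $k$-th Krylov space, so the optimal CG step beats the exact line search along $r_{k-1}$).
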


The proof of the following proposition follows the ideas of~\cite{ckns}, but (unlike~\cite{ckns}) allows that $\psi_\ell\approx\psi_\ell^\star$ results from the inexact solution of~\eqref{eq:abstract:discrete} with, e.g., PCG.

\def\lcg{\lambda}
\begin{lemma}\label{prop:inexact}
Let $0<\theta\le1$ and suppose that $\TT'_{\ell+1} \in \refine(\TT'_\ell)$ satisfies, for some $\psi_\ell\in\XX_\ell$,
the D\"orfler marking criterion %
\begin{align}\label{eq:abstract:doerfler}
 \theta\varrho_\ell(\psi_\ell)^2 \le \varrho_\ell(\TT'_\ell\backslash\TT'_{\ell+1},\psi_\ell)^2.
\end{align}
Let $0\leq \lcg<1$.
Then, there exist $0<\kctr,\qctr<1$ such that the following assertion holds: 
If $\psi_{\ell+1}\in\XX_{\ell+1}$ is close to the exact best approximation $\psi_{\ell+1}^\star\in\XX_{\ell+1}$ in the sense of
\begin{align}\label{eq:abstract:inexact}
  \enorm{\psi_{\ell+1}^\star - \psi_{\ell+1}}^2 \leq \lambda \,\enorm{\psi_{\ell+1}^\star-\psi_\ell}^2,
\end{align}
then it follows that the so-called quasi-error is contractive, i.e.,
\begin{align}\label{eq:abstract:contraction}
 \Delta_{\ell+1} \le\qctr\,\Delta_{\ell},
 \quad\text{where}\quad
 \Delta_\bullet := \enorm{\psi-\psi_\bullet}^2 + \kctr\,\varrho_\bullet(\psi_\bullet)^2
\end{align}
The constants $\kctr$, $\qctr$ depend only on $\qred$, $\lcg$, $\Cstab$, $\Crel$, and $\theta$.
\end{lemma}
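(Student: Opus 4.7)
My plan is to adapt the perturbation strategy of~\cite{ckns} to the present setting, in which the new iterate $\psi_{\ell+1}$ is only an inexact approximation of the Galerkin best approximation $\psi_{\ell+1}^\star$, governed by~\eqref{eq:abstract:inexact}. As in the exact case, contraction of $\Delta_\bullet$ will come from combining an estimator reduction (obtained from (A1) and (A2)) with an energy reduction (obtained via a perturbed Pythagoras), and then using reliability (A3) to trade a fraction of the energy error against the estimator.

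\emph{Estimator reduction.} Applying stability (A1) on $\TT'_\ell\cap\TT'_{\ell+1}$ and reduction (A2) on the refined elements, together with Young's inequality and the D\"orfler criterion~\eqref{eq:abstract:doerfler}, I obtain, for any $\delta>0$,
\begin{align*}
 \varrho_{\ell+1}(\psi_{\ell+1})^2 \le q_\theta\,\varrho_\ell(\psi_\ell)^2 + C(\delta)\,\enorm{\psi_{\ell+1}-\psi_\ell}^2,
 \quad q_\theta:=(1+\delta)\bigl(1-(1-\qred)\theta\bigr),
\end{align*}
with $C(\delta)$ depending only on $\delta$ and $\Cstab$. This is the usual estimator reduction, but written at the inexact iterates; fixing $\delta$ small ensures $q_\theta<1$.

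\emph{Inexact Pythagoras.} Since $\psi_\ell\in\XX_\ell\subseteq\XX_{\ell+1}$ and $\psi_{\ell+1}\in\XX_{\ell+1}$, the Galerkin orthogonality~\eqref{eq:abstract:discrete} in $\XX_{\ell+1}$ yields two Pythagoras identities, whose difference combined with~\eqref{eq:abstract:inexact} gives
\begin{align*}
 \enorm{\psi-\psi_{\ell+1}}^2 \le \enorm{\psi-\psi_\ell}^2 - (1-\lambda)\,\enorm{\psi_{\ell+1}^\star-\psi_\ell}^2.
\end{align*}
A triangle inequality with~\eqref{eq:abstract:inexact} also gives $\enorm{\psi_{\ell+1}-\psi_\ell}^2\le 2(1+\lambda)\enorm{\psi_{\ell+1}^\star-\psi_\ell}^2$, which closes the loop with the perturbation term from the estimator reduction. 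Adding the two bounds with weight $\kctr$ yields
\begin{align*}
 \Delta_{\ell+1} \le \enorm{\psi-\psi_\ell}^2 - \bigl[1-\lambda-2\kctr C(\delta)(1+\lambda)\bigr]\enorm{\psi_{\ell+1}^\star-\psi_\ell}^2 + \kctr q_\theta\,\varrho_\ell(\psi_\ell)^2.
\end{align*}

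\emph{Contraction via reliability.} Expanding $\enorm{\psi-\psi_\ell}^2=\enorm{\psi-\psi_{\ell+1}^\star}^2+\enorm{\psi_{\ell+1}^\star-\psi_\ell}^2$ by Pythagoras, bounding $\enorm{\psi-\psi_{\ell+1}^\star}^2\le\Crel^2\varrho_{\ell+1}(\psi_{\ell+1}^\star)^2$ by (A3), then $\varrho_{\ell+1}(\psi_{\ell+1}^\star)^2$ by $\varrho_{\ell+1}(\psi_{\ell+1})^2$ via (A1) at the same level (together with~\eqref{eq:abstract:inexact}), and finally $\varrho_{\ell+1}(\psi_{\ell+1})^2$ by $\varrho_\ell(\psi_\ell)^2+\enorm{\psi_{\ell+1}^\star-\psi_\ell}^2$ via the estimator reduction, I reduce the contraction $\Delta_{\ell+1}\le\qctr\Delta_\ell$ to two sign conditions on the coefficients of $\varrho_\ell(\psi_\ell)^2$ and $\enorm{\psi_{\ell+1}^\star-\psi_\ell}^2$. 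The main obstacle is the joint feasibility of these conditions: the first imposes a lower bound of the form $\kctr\gtrsim(1-\qctr)\Crel^2 q_\theta/(\qctr-q_\theta)$, the second an upper bound $2\kctr C(\delta)(1+\lambda)\le 1-\lambda-(1-\qctr)K$ with $K=K(\Cstab,\Crel,\lambda,\delta)$. I will verify compatibility by first choosing $\delta$ small so that $q_\theta<1$, then $\qctr\in(q_\theta,1)$ close enough to $1$, and finally $\kctr$ inside the resulting non-empty window; the final constants depend only on $\qred,\lambda,\Cstab,\Crel$, and $\theta$, as asserted.
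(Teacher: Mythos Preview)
Your plan is correct and follows essentially the same route as the paper: both combine an inexact Pythagoras identity (from Galerkin orthogonality in $\XX_{\ell+1}$ together with~\eqref{eq:abstract:inexact}), the estimator reduction from (A1)--(A2) and D\"orfler marking, and reliability (A3) applied to $\enorm{\psi-\psi_{\ell+1}^\star}$, and then tune the free parameters. The only difference is organizational: the paper first forms a convex combination of the two energy bounds (with parameter $\delta$) and afterwards adds the weighted estimator, whereas you first add the weighted estimator reduction and then peel off a $(1-\qctr)$--fraction of $\enorm{\psi-\psi_\ell}^2$ to process via reliability; correspondingly the paper chooses $\varepsilon$, then $\kctr$, then $\delta$, while you choose $\delta$, then $\qctr$, then $\kctr$, but both parameter selections succeed for the same reason (the lower bound on $\kctr$ tends to $0$ and the upper bound to a positive number as $\qctr\to1$).
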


\begin{proof}
Applying the Pythagoras theorem~\eqref{eq:abstract:pythagoras} twice and using~\eqref{eq:abstract:inexact}, we prove that
\begin{eqnarray*}
 \enorm{\psi-\psi_{\ell+1}}^2
 &=& \enorm{\psi-\psi_{\ell+1}^\star}^2 + \enorm{\psi_{\ell+1}^\star-\psi_{\ell+1}}^2
 \\&
 =& \enorm{\psi-\psi_{\ell}}^2 - \enorm{\psi_{\ell+1}^\star-\psi_\ell}^2 + \enorm{\psi_{\ell+1}^\star-\psi_{\ell+1}}^2
  \\
  &\leq & \enorm{\psi-\psi_{\ell}}^2 - (1-\lcg)\enorm{\psi_{\ell+1}^\star-\psi_\ell}^2.
\end{eqnarray*}
In addition, we may also employ reliability~\eqref{axiom:reliability} and stability~\eqref{axiom:stability} to prove that
\begin{align}\label{eq0:abstract}
\begin{split}
 \enorm{\psi-\psi_{\ell+1}}^2
 &\,= \,\enorm{\psi-\psi_{\ell+1}^\star}^2 + \enorm{\psi_{\ell+1}^\star-\psi_{\ell+1}}^2
 \\
 &\reff{axiom:reliability}\le \Crel^2\,\varrho_{\ell+1}(\psi_{\ell+1}^\star)^2 + \enorm{\psi_{\ell+1}^\star-\psi_{\ell+1}}^2
 \\
 &\reff{axiom:stability}\le 2\Crel^2\,\varrho_{\ell+1}(\psi_{\ell+1})^2 + (1+2\Crel^2\Cstab^2)\enorm{\psi_{\ell+1}^\star-\psi_{\ell+1}}^2
  \\
 & \le 2\Crel^2\,\varrho_{\ell+1}(\psi_{\ell+1})^2 + (1+2\Crel^2\Cstab^2)\lcg\enorm{\psi_{\ell+1}^\star-\psi_{\ell}}^2.
\end{split}
\end{align}
Let $\delta>0$ which will be fixed later. Define $C'' := (1+2\,\Crel^2\Cstab^2)$. Then, the last two estimates lead to
\begin{eqnarray}\label{eq1:abstract}
 \notag
 \enorm{\psi-\psi_{\ell+1}}^2 
 &=& 
 (1-\delta)\,\enorm{\psi-\psi_{\ell+1}}^2
 + \delta\,\enorm{\psi-\psi_{\ell+1}}^2
 \\
 &\le& 
 \notag
  (1-\delta)\,\enorm{\psi-\psi_{\ell}}^2 - \big( (1-\delta)(1-\lcg) - C''\delta\lcg \big)\,\enorm{\psi_{\ell+1}^\star-\psi_\ell}^2 
 \\&&\quad
 + 2\delta\Crel^2\,\varrho_{\ell+1}(\psi_{\ell+1})^2.
\end{eqnarray}
Having bounded the energy error, we consider the estimator. For all $\eps>0$, it holds that
\begin{eqnarray*}
 \varrho_{\ell+1}(\psi_{\ell+1})^2
 &\reff{axiom:stability}\le& 
 \big(\varrho_{\ell+1}(\psi_{\ell}) + \Cstab\,\enorm{\psi_{\ell+1}-\psi_\ell}\big)^2
 \\
 &\le& (1+\eps)\,\varrho_{\ell+1}(\psi_{\ell})^2 + (1+\eps^{-1})\,\Cstab^2\,\enorm{\psi_{\ell+1}-\psi_\ell}^2
 \\
 &\le& (1+\eps)\,\varrho_{\ell+1}(\psi_{\ell})^2 + 2(1+\eps^{-1})\,\Cstab^2\,\big(\enorm{\psi_{\ell+1}^\star-\psi_\ell}^2
 +\enorm{\psi_{\ell+1}^\star-\psi_{\ell+1}}^2\big)
 \\
 &\reff{eq:abstract:inexact}\le&
 (1+\eps)\,\varrho_{\ell+1}(\psi_{\ell})^2 
 + 2(1+\eps^{-1})\,\Cstab^2(1+\lcg) \, \enorm{\psi_{\ell+1}^\star-\psi_\ell}^2.
\end{eqnarray*}
Moreover, stability~\eqref{axiom:stability}, reduction~\eqref{axiom:reduction}, and D\"orfler marking~\eqref{eq:abstract:doerfler} yield that
\begin{eqnarray*}
 \varrho_{\ell+1}(\psi_{\ell})^2
 &=& \varrho_{\ell+1}(\TT'_{\ell+1}\cap\TT'_\ell,\psi_{\ell})^2 + \varrho_{\ell+1}(\TT'_{\ell+1}\backslash\TT'_\ell,\psi_{\ell})^2
 \\
 &\le& \varrho_\ell(\TT'_{\ell+1}\cap\TT'_\ell,\psi_{\ell})^2 + \qred\,\varrho_\ell(\TT'_\ell\backslash\TT'_{\ell+1},\psi_{\ell})^2
 \\
 &=& \varrho_{\ell}(\psi_{\ell})^2 - (1-\qred)\,\varrho_\ell(\TT'_\ell\backslash\TT'_{\ell+1},\psi_{\ell})^2
 \\
 &\reff{eq:abstract:doerfler}\le&
 \big(1-(1-\qred)\theta\big)\,\varrho_{\ell}(\psi_{\ell})^2.
\end{eqnarray*}
We set $q':= 1-(1-\qred)\theta < 1$ and $C':=2\Cstab^2(1+\lcg)$.
Combining the last two estimates, we obtain that
\begin{align}\label{eq2:abstract}
 \varrho_{\ell+1}(\psi_{\ell+1})^2
  \le (1+\eps)q'\,\varrho_{\ell}(\psi_{\ell})^2 + (1+\eps^{-1})C'\,\enorm{\psi_{\ell+1}^\star-\psi_\ell}^2.
\end{align}
Let $\kctr>0$ which is fixed later. Combining~\eqref{eq1:abstract}--\eqref{eq2:abstract}, we infer that
\begin{eqnarray*}
 \Delta_{\ell+1} 
 &=& \enorm{\psi-\psi_{\ell+1}}^2 + \kctr\,\varrho_{\ell+1}(\psi_{\ell+1})^2
 \\&
 \reff{eq1:abstract}\le& (1-\delta)\,\enorm{\psi-\psi_{\ell}}^2 
  - \big( (1-\delta)(1-\lcg) - C''\delta\lcg \big)\enorm{\psi_{\ell+1}^\star-\psi_\ell}^2
 \\&&\quad
 + (\kctr+2\delta\Crel^2)\,\varrho_{\ell+1}(\psi_{\ell+1})^2
 \\&
 \reff{eq2:abstract}\le& (1-\delta)\,\enorm{\psi-\psi_{\ell}}^2
  + (\kctr+2\delta\Crel^2)(1+\eps)q'\,\varrho_\ell(\psi_\ell)^2
 \\&&\quad
   - \big\{(1-\delta)(1-\lcg) - C''\delta\lcg - (\kctr+2\delta\Crel^2) C' (1+\eps^{-1})\big\}\,\enorm{\psi_{\ell+1}^\star-\psi_\ell}^2
 \\
 &=&
 (1-\delta)\,\enorm{\psi-\psi_{\ell}}^2
  + \kctr\, (1+ \kctr^{-1}2\delta\Crel^2)(1+\eps)q'\,\varrho_\ell(\psi_\ell)^2
 \\&&\quad
   - \big\{(1-\delta)(1-\lcg) - C''\delta\lcg - (\kctr+2\delta\Crel^2) C' (1+\eps^{-1})\big\}\,\enorm{\psi_{\ell+1}^\star-\psi_\ell}^2.
\end{eqnarray*}
It remains to choose $\delta,\kctr,\eps>0$. 
First, choose $\eps>0$ sufficiently small such that $(1+\eps)q'<1$.
Then, choose $\kctr>0$ sufficiently small such that $(1-\lcg)-\kctr C'(1+\eps^{-1})>0$.
Finally, choose $\delta>0$ such that
\begin{itemize}
\item $q'' := \big\{ (1+\kctr^{-1}2\delta\Crel^2)(1+\eps)q' \big\} < 1$,
\item $\big\{(1-\delta)(1-\lcg) - C''\delta\lcg - (\kctr+2\delta\Crel^2) C' (1+\eps^{-1})\big\}\ge0$.
\end{itemize}
This leads to
\begin{align*}
 \Delta_{\ell+1} \le (1-\delta)\,\enorm{\psi-\psi_{\ell}}^2 + \kctr q''\,\varrho_\ell(\psi_\ell)^2
 \le \max\{1-\delta\,,\,q''\} \, \Delta_\ell.
\end{align*}
and hence concludes the proof.
\end{proof}

\begin{remark}\label{remark:reliable}
Note that~\eqref{eq0:abstract} shows that
\begin{align}\label{eq:remark:reliable}
  \enorm{\psi - \psi_\ell}^2 \lesssim \rho_\ell(\psi_\ell)^2 + \enorm{\psi_\ell^\star-\psi_\ell}^2,
\end{align}
where the hidden constants depend only on $\Crel,\Cstab>0$. The error term $\enorm{\psi_\ell^\star-\psi_\ell}^2$ can 
efficiently be evaluated in an equivalent norm: Let $x^\star,x\in\R^N$ be the coefficent vectors of
$\psi_\ell^\star$ resp.\, $\psi_\ell$, $S$ be the stiffness matrix of $\edual\cdot\cdot$, and $P$ be an optimal preconditioner.
Then,
\begin{align}
\begin{split}\label{eq:cg:normequiv}
  \enorm{\psi_\ell^\star-\psi_\ell}^2 &= (x^\star-x)^T S (x^\star-x) = \big( b-Sx\big)^T S^{-1} \big(b-Sx\big) \\
  &\simeq \big( b-Sx\big)^T P^{-1} \big(b-Sx\big) =: \enorm{\psi_\ell^\star-\psi_\ell}_P^2, 
\end{split}
\end{align}
where the hidden constants depend only on $c_P,C_P$ from~\eqref{eq:cg:specequiv}. Note that $\enorm{\psi_\ell^\star-\psi_\ell}_P^2$ is evaluated in
each iteration of the PCG algorithm; see~\cite[Algorithm~11.5.1]{matcomp}. 
Therefore, no extra computational cost is needed.\qed
\end{remark}

\begin{algorithm}\label{algorithm:adaptive}
\textbf{Input:} Parameter $0<\theta\le1$ as well as $0\le\lambda<1$, initial triangulation $\TT'_0$, initial guess
  $\psi_{-1}\in\XX_0$, as well as tolerance $\tau>0$.\\
\textbf{Adaptive loop:} For all $\ell=0,1,2,\dots$, iterate the following steps~{\rm(i)--(iii)}, until
\begin{align}\label{eq:algorithm:stopping}
  \varrho_{\ell}(\psi_{\ell})^2 + \enorm{\psi_\ell^\star-\psi_\ell}_P^2 \le\tau^2,
\end{align}
where $\enorm{\cdot}_P$ is defined in~\eqref{eq:cg:normequiv}:
\begin{itemize}
\item[\rm(i)] Compute an approximate solution $\psi_\ell:=\psi_{\ell,k}\in\XX_\ell$
to~\eqref{eq:cg:weakform}, where $k\in\N_0$ is the minimal number such that 
the $k$-th iterate $\psi_{\ell,k}$ in PCG with initial guess $\psi_{\ell,0}:=\psi_{\ell-1}$ (see Proposition~\ref{prop:cg}) satisfies
\begin{align}\label{eq:algorithm:inexact}
  \enorm{\psi_\ell^\star-\psi_\ell}^2 \leq \lambda \, \enorm{\psi_\ell^\star-\psi_{\ell-1}}^2.
\end{align}
\item[\rm(ii)] Determine a set of marked elements $\MM'_\ell\subseteq\TT'_\ell$ such that
\begin{align}\label{eq:algorithm:doerfler}
 \theta\,\varrho_\ell(\psi_\ell)^2 \le \varrho_\ell(\MM'_\ell,\psi_\ell)^2.
\end{align}
\item[\rm(iii)] Generate new triangulation $\TT'_{\ell+1}:=\refine(\TT'_\ell,\MM'_\ell)$.
\end{itemize}
{\bfseries Output:} Smallest index $\ell$, adaptively refined triangulation $\TT'_\ell\in\refine(\TT'_0)$, 
and discrete approximation $\psi_\ell\in\XX_\ell$ which satisfies the stopping criterion~\eqref{eq:algorithm:stopping}.
\end{algorithm}

\begin{remark}\label{remark:redpcg}
Proposition~\ref{prop:cg} proves that for fixed $0\leq\lambda<1$, the smallest number of PCG iterations $k$ such
that~\eqref{eq:algorithm:inexact} holds, is uniformly bounded by some $K\in\N$ that depends only on $\lambda$ 
and $\CPCG$, but not on $\ell\in\N_0$. For the particular choice $\lambda := (1-\CPCG^{-1})^{1/2}$, the
condition~\eqref{eq:algorithm:inexact} isa already satisfied after one PCG step.
\end{remark}

\begin{proposition}\label{theorem:adaptive}
Let $0<\theta\le1$, $0\leq \lcg<1$ and $\tau>0$.
Then, Algorithm~\ref{algorithm:adaptive} terminates after finitely many iterations and provides some triangulation 
$\TT'_\ell\in\refine(\TT'_0)$ together with some discrete approximation $\psi_\ell\in\XX_\ell$ to $\psi\in\HH$ such that
\begin{align}
 \enorm{\psi-\psi_\ell} \le \Cstp\,\tau,
\end{align}
where $\Cstp>0$ depends only on $\Crel$ and $\Cstab$.
\end{proposition}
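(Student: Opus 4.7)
The plan is to split the argument into two parts: first the a posteriori bound that follows once the stopping criterion triggers, and second the proof that it must trigger after finitely many iterations.

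For the error bound at termination, suppose~\eqref{eq:algorithm:stopping} holds at some index $\ell$. I would combine Remark~\ref{remark:reliable}, i.e.,~\eqref{eq:remark:reliable}, with the spectral equivalence~\eqref{eq:cg:normequiv} to estimate
\begin{align*}
 \enorm{\psi-\psi_\ell}^2 \lesssim \varrho_\ell(\psi_\ell)^2 + \enorm{\psi_\ell^\star-\psi_\ell}^2 \simeq \varrho_\ell(\psi_\ell)^2 + \enorm{\psi_\ell^\star-\psi_\ell}_P^2 \le \tau^2,
\end{align*}
where the hidden constants depend only on $\Crel$, $\Cstab$, and the spectral constants $c_P,C_P$. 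This produces the constant $\Cstp$ of the claim.

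For termination, I would argue by contradiction and assume the algorithm runs forever. Step~(ii) together with step~(iii) enforces the Dörfler property~\eqref{eq:abstract:doerfler} since $\MM'_\ell\subseteq\TT'_\ell\setminus\TT'_{\ell+1}$, while step~(i) produces an iterate $\psi_\ell$ satisfying the inexactness condition~\eqref{eq:abstract:inexact}. Hence Lemma~\ref{prop:inexact} applies at every level and yields the quasi-error contraction $\Delta_{\ell+1}\le \qctr\,\Delta_\ell$. This forces $\Delta_\ell\to 0$ geometrically, so from the very definition of $\Delta_\ell$ I read off $\varrho_\ell(\psi_\ell)\to 0$ and $\enorm{\psi-\psi_\ell}\to 0$.

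It remains to control the algebraic residual $\enorm{\psi_\ell^\star-\psi_\ell}_P$. Using~\eqref{eq:algorithm:inexact} and then Pythagoras~\eqref{eq:abstract:pythagoras} applied to the nested inclusion $\psi_{\ell-1}\in\XX_{\ell-1}\subseteq\XX_\ell$, I bound
\begin{align*}
 \enorm{\psi_\ell^\star-\psi_\ell}^2 \le \lambda\,\enorm{\psi_\ell^\star-\psi_{\ell-1}}^2 \le \lambda\,\enorm{\psi-\psi_{\ell-1}}^2 \longrightarrow 0,
\end{align*}
and spectral equivalence~\eqref{eq:cg:normequiv} transfers this to $\enorm{\psi_\ell^\star-\psi_\ell}_P\to 0$. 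Therefore the left-hand side of the stopping criterion~\eqref{eq:algorithm:stopping} tends to zero, so for sufficiently large $\ell$ it falls below $\tau^2$, contradicting non-termination. The main technical point — and the only place where one has to be careful — is verifying that the two hypotheses of Lemma~\ref{prop:inexact} are simultaneously available at every step, but this is exactly what the Dörfler marking in~(ii) and the PCG stopping rule in~(i) (whose finite realizability is guaranteed by Remark~\ref{remark:redpcg}) are engineered to provide.
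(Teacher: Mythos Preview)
Your proof is correct and follows essentially the same approach as the paper: verify the hypotheses of Lemma~\ref{prop:inexact} from the marking and PCG steps, deduce geometric decay of the quasi-error, infer that both the estimator and the algebraic residual tend to zero so the stopping criterion must trigger, and finally invoke Remark~\ref{remark:reliable} together with the norm equivalence~\eqref{eq:cg:normequiv} for the error bound. The only cosmetic differences are that the paper argues directly rather than by contradiction, and bounds $\enorm{\psi_\ell^\star-\psi_\ell}\le\enorm{\psi-\psi_\ell}$ in one Pythagoras step instead of routing through $\psi_{\ell-1}$ as you do; your observation that $\Cstp$ also picks up a dependence on $c_P,C_P$ through~\eqref{eq:cg:normequiv} is in fact more precise than the paper's stated dependence.
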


\begin{proof}
Since marked elements are refined, i.e., $\MM'_\ell\subseteq\TT'_\ell\backslash\TT'_{\ell+1}$, the marking
criterion~\eqref{eq:algorithm:doerfler} ensures that~\eqref{eq:abstract:doerfler} is satisfied in each step $\ell\ge0$ of the adaptive loop. For $\ell\ge1$, the accuracy criterion~\eqref{eq:algorithm:inexact} coincides with~\eqref{eq:abstract:inexact}. Hence, Proposition~\ref{prop:inexact} applies and provides $0<\kctr,\qctr<1$ with~\eqref{eq:abstract:contraction}. In particular, this guarantees
\begin{align*}
 \enorm{\psi-\psi_\ell}^2 + \kctr\,\varrho_{\ell}(\psi_\ell)^2 
 \le \qctr^\ell\,\big(\enorm{\psi-\psi_0}^2 + \kctr\,\varrho_{\ell}(\psi_0)^2\big)
 \quad\text{for all }\ell\ge0.
\end{align*}
In particular (and formally for $\tau=0$), this proves $\varrho_{\ell}(\psi_\ell)^2 + \enorm{\psi-\psi_\ell}^2\to0$ as $\ell\to\infty$.
Since $\enorm{\psi_\ell^\star-\psi_\ell}_P \simeq \enorm{\psi_\ell^\star-\psi_\ell}\leq \enorm{\psi-\psi_\ell}$, this also shows 
$\enorm{\psi_\ell^\star-\psi_\ell}_P \to 0$.
Hence, there exists a minimal $\ell\in\N_0$ such that the stopping criterion~\eqref{eq:algorithm:stopping} is satisfied and Algorithm~\ref{algorithm:adaptive} terminates.
Remark~\ref{remark:reliable} provides some constant $\Cstp>0$ which depends only on $\Crel$ and $\Cstab$, such that
\begin{align*}
  \enorm{\psi-\psi_\ell}^2 \reff{eq:remark:reliable}\le \Cstp (\varrho_\ell(\psi_\ell)^2 + \enorm{\psi_\ell^\star-\psi_\ell}_P^2)
  \reff{eq:algorithm:stopping}\le \Cstp\, \tau.
\end{align*}
This concludes the proof.
\end{proof}


\def\Cinit{C_{\rm init}}
\def\Capx{C_{\rm apx}}

\subsection{Realization of step~[i] of Uzawa iteration}
\label{section:uzawa_step_i}%
Step~[i] of Algorithm~\ref{algorithm:uzawa:ideal} will be realized by means of Algorithm~\ref{algorithm:adaptive}, where 
$$
\psi := \phi_\star^{(j)} \in H^{-1/2}(\Gamma)=:\HH,
\quad 
\XX_\bullet := \PP^{p-1}(\TT_\bullet|_\Gamma), 
\quad\text{and}\quad 
\edual\cdot\cdot:=\ip{V(\cdot)}{(\cdot)}_\Gamma.
$$
We employ the weighted-residual error estimator from~\cite{cs95,cc97,cms01}. We note, however, that the residual involves the integration of $(\dlo-1/2)u_0$ which can hardly be performed for continuous data $u_0\in H^{1/2}(\Gamma)$. Therefore, we follow~\cite{partOne}, suppose additional regularity $u_0\in H^1(\Gamma)$, and approximate $u_0\approx u_{0,\bullet}\in\SS^1(\TT_\bullet|_\Gamma)$. This additional approximation error is also included in the {\sl a~posteriori} error estimator. 
Let $\nabla_\Gamma(\cdot)$ denote the surface gradient.
Recall that $V\phi_\star^{(j)} = (\dlo-1/2)(\gamma_0 u_{j-1}^{(j-1)}-u_0)$. Then, the overall estimator reads
\begin{align*}
 \mu_\bullet(E,\psi_\bullet)^2
 &:=
 |E|^{1/(d-1)}\,\norm{\nabla_\Gamma\big((\dlo-1/2)(\gamma_0u_{j-1}^{(j-1)}-u_{0,\bullet})-\slo\psi_\bullet\big)}{L^2(E)}^2
 \\&\qquad
 + |E|^{1/(d-1)}\,\norm{(1-\Pi_\bullet)\nabla_\Gamma u_0}{L^2(E)}^2,
\end{align*}
where $\Pi_\bullet:L^2(\Gamma)\to\PP^{p-1}(\TT_\bullet|_\Gamma)$ denotes the $L^2$-orthogonal projection onto $\PP^{p-1}(\TT_\bullet|_\Gamma)$. 

\begin{lemma}[{\cite[Proposition~2]{partOne} and \cite[Section~6]{partOne}}]
\label{lemma:bem:estimator}
Suppose that the discretization $u_{0,\bullet}\in\SS^p(\TT_\bullet|_\Gamma)$ of $u_0\in H^1(\Gamma)$ is obtained
\begin{itemize}
\item either by the Scott-Zhang projection~\cite{partOne} onto $\SS^p(\TT_\bullet|_\Gamma)$ for $p\ge1$ and $d\ge2$,
\item or by the $L^2$-orthogonal projection onto $\SS^p(\TT_\bullet|_\Gamma)$ for $p\ge1$ and $d=2$,
\item or by the $L^2$-orthogonal projection onto $\SS^p(\TT_\bullet|_\Gamma)$ for $p\ge1$ and $d=3$, if this is $H^1$-stable (see~\cite{kpp,ghs2016}),
\item or by nodal interpolation for $p=1$ and $d=2$.
\end{itemize}
Suppose that $\refine(\cdot)$ releies on newest vertex bisection~\cite{kpp,stevenson} for $d=3$.
Then, the error estimator $\mu_\bullet(\cdot)$ satisfies the assumptions~\eqref{axiom:stability}--\eqref{axiom:reliability} from Section~\ref{section:adaptive}, where $\Cstab$, $\Crel$, and $0<\qred<1$ depend only on the mesh-refinement strategy $\refine(\cdot)$ and $\sigma$-shape regularity of $\TT_0$. Moreover, in all these cases, $\TT_\circ\in\refine(\TT_\bullet)$ implies that 
\begin{align}\label{eq:lemma:bem:estimator}
 \Capx^{-1}\,\norm{u_{0,\circ}-u_{0,\bullet}}{H^{1/2}(\Gamma)}
 \le \bigg(\sum_{E\in\TT_\bullet|_\Gamma}|E|^{1/(d-1)}\,\norm{(1-\Pi_\bullet)\nabla_\Gamma u_0}{L^2(E)}^2\bigg)^{1/2}
 \le \mu_\bullet(\psi_\bullet)
\end{align}
for all $\psi_\bullet\in\PP^{p-1}(\TT_\bullet|_\Gamma)$,
where $\Capx>0$ depends only on $\Gamma$, $p$, and $\sigma$-shape regularity of $\TT_\bullet$ as well as the use of newest vertex bisection for $d=3$.
\qed
\end{lemma}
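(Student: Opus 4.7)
My plan is to verify the three axioms \eqref{axiom:stability}–\eqref{axiom:reliability} individually for the weighted-residual estimator $\mu_\bullet$, following the strategy already developed in~\cite{cs95,cc97,cms01,partOne}, and then to establish the approximation bound~\eqref{eq:lemma:bem:estimator} separately. Throughout, I would fix the current Uzawa step, so that $u_{j-1}^{(j-1)} \in H^1(\Omega)$ is mesh-independent; the mesh-dependent ingredients of $\mu_\bullet$ are then only $\psi_\bullet$, $u_{0,\bullet}$, and the elementwise $L^2$-projection $\Pi_\bullet$.

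For stability~\eqref{axiom:stability}, I would split $\mu_\bullet(E,\psi_\bullet)^2$ into the residual part $|E|^{1/(d-1)}\|\nabla_\Gamma r_\bullet\|_{L^2(E)}^2$ with $r_\bullet := (\dlo-1/2)(\gamma_0 u_{j-1}^{(j-1)} - u_{0,\bullet}) - \slo\psi_\bullet$ and the oscillation part $|E|^{1/(d-1)}\|(1-\Pi_\bullet)\nabla_\Gamma u_0\|_{L^2(E)}^2$. On $E \in \TT_\bullet|_\Gamma \cap \TT_\circ|_\Gamma$, the oscillation part is identical in both meshes because $\Pi_\bullet$ and $\Pi_\circ$ coincide elementwise on unrefined elements. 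For the residual part, the triangle inequality together with the local inverse-type estimate
\[
 \big\||E|^{1/(2(d-1))} \nabla_\Gamma \slo \eta\big\|_{L^2(E)} \lesssim \|\eta\|_{H^{-1/2}(\Gamma)}
\]
and its analog for $\dlo$ (as in~\cite{cc97,ffkmp}) reduces the difference to $\|\psi_\circ-\psi_\bullet\|_{H^{-1/2}(\Gamma)} + \|u_{0,\circ}-u_{0,\bullet}\|_{H^{1/2}(\Gamma)}$; the second term is then absorbed by the $H^{1/2}$-stability of the chosen projection onto $\SS^p$. For reduction~\eqref{axiom:reduction}, the essential input is $|T'| \le \qref |T|$ for the unique father $T$ of any refined element $T'$, which provides a factor $\qref^{1/(d-1)}$ in both contributions (and $\Pi_\circ$ can only improve upon $\Pi_\bullet$ on refined elements). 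For reliability~\eqref{axiom:reliability}, the classical Carstensen–Stephan argument gives
\[
 \|\psi - \psi_\bullet^\star\|_{H^{-1/2}(\Gamma)} \lesssim \big\|h^{1/2}\nabla_\Gamma(\slo\psi - \slo\psi_\bullet^\star)\big\|_{L^2(\Gamma)}
\]
by testing Galerkin orthogonality with a Scott–Zhang quasi-interpolant; writing $\slo\psi = (\dlo-1/2)(\gamma_0 u_{j-1}^{(j-1)} - u_0)$ and inserting $u_{0,\bullet}$ splits the upper bound into the residual contribution (with $u_{0,\bullet}$) and an additional $\|u_0-u_{0,\bullet}\|_{H^{1/2}(\Gamma)}$ term, which is absorbed into the oscillation part of $\mu_\bullet$.

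The estimate~\eqref{eq:lemma:bem:estimator} then follows from the approximation properties of the chosen projection onto $\SS^p(\TT_\bullet|_\Gamma)$: interpolating between the $H^1$-stability and the local $L^2$-best approximation of $\nabla_\Gamma u_0$ by $\Pi_\bullet\nabla_\Gamma u_0$ yields
\[
 \|u_0 - u_{0,\circ}\|_{H^{1/2}(\Gamma)}^2 + \|u_0 - u_{0,\bullet}\|_{H^{1/2}(\Gamma)}^2 \lesssim \sum_{E \in \TT_\bullet|_\Gamma} |E|^{1/(d-1)}\|(1-\Pi_\bullet)\nabla_\Gamma u_0\|_{L^2(E)}^2,
\]
which is precisely the claimed bound; the various case distinctions (Scott–Zhang versus $L^2$-projection versus nodal interpolation) are needed only to ensure the $H^{1/2}$-stability of the chosen projection, so that the first (non-computable) term and the second (computable) term are equivalent.

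The main obstacle I anticipate is \eqref{axiom:stability}, because the estimator depends on the mesh through three distinct objects: $\psi_\bullet$, $u_{0,\bullet}$, and $\Pi_\bullet$. Extracting a clean stability constant requires careful bookkeeping together with the $H^{1/2}$-stability of the data projection, which, for the $L^2$-projection in three dimensions, is only known under $H^1$-stability conditions that hinge on newest vertex bisection and the mesh-regularity result of~\cite{kpp,ghs2016}. Once this is handled, the remaining axioms and the approximation bound follow along the lines of~\cite[Proposition~2 and Section~6]{partOne} with only notational changes, since $u_{j-1}^{(j-1)}$ plays the role of frozen data in the current Uzawa step.
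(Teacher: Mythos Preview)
The paper does not prove this lemma: it is stated with a citation to \cite[Proposition~2 and Section~6]{partOne} and closed immediately with \qed. Your outline is therefore not competing with any argument in the present paper; it is a reconstruction of what the cited reference does, and in its broad strokes (inverse-type estimates for $\slo$ and $\dlo$ to get stability, element-size reduction for \eqref{axiom:reduction}, the Carstensen--Stephan localization for \eqref{axiom:reliability}, and approximation properties of the chosen data projection for~\eqref{eq:lemma:bem:estimator}) your sketch is consistent with that literature.

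There is one point where your argument is imprecise. In verifying~\eqref{axiom:stability} you correctly isolate, on non-refined facets, a contribution of size $\norm{u_{0,\circ}-u_{0,\bullet}}{H^{1/2}(\Gamma)}$ coming from the change of the data approximation between the two meshes. You then say this is ``absorbed by the $H^{1/2}$-stability of the chosen projection''. That is not what happens: $H^{1/2}$-stability controls $\norm{u_{0,\bullet}}{H^{1/2}(\Gamma)}$ in terms of $\norm{u_0}{H^{1/2}(\Gamma)}$, but it cannot bound $\norm{u_{0,\circ}-u_{0,\bullet}}{H^{1/2}(\Gamma)}$ by $\enorm{\psi_\circ-\psi_\bullet}$, which is what~\eqref{axiom:stability} literally demands. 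The mechanism in~\cite{partOne} is instead precisely the estimate~\eqref{eq:lemma:bem:estimator}: the data perturbation is bounded by the oscillation contribution, which is itself a summand of $\mu_\bullet(\psi_\bullet)$. This gives a stability bound whose right-hand side contains both $\enorm{\psi_\circ-\psi_\bullet}$ and (a sub-part of) $\mu_\bullet(\psi_\bullet)$; that slightly generalized form of~\eqref{axiom:stability} is still sufficient to run the estimator-reduction argument behind Lemma~\ref{prop:inexact}. So your plan works once you replace the vague ``absorbed by $H^{1/2}$-stability'' by the concrete use of~\eqref{eq:lemma:bem:estimator}, which you in any case establish separately.
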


Because of Lemma~\ref{lemma:bem:estimator}, we can employ Algorithm~\ref{algorithm:adaptive} to realize step~[i] of Algorithm~\ref{algorithm:uzawa:ideal}. In particular, the following theorem proves that the number of adaptive iterations of Algorithm~\ref{algorithm:adaptive} is uniformly bounded.

\begin{theorem}\label{prop:bem:estimator}
Let $0<\gamma<1$, $0<\theta\le1$, and $0\leq\lcg<1$.
For $j\in\N$, choose $\tau = \gamma^j$ and $\TT'_0:=\TT_{j-1}$. Then, the following assertions {\rm(a)--(b)} hold:

{\rm(a)}
After $\ell\in\N_0$ iterations, Algorithm~\ref{algorithm:adaptive} returns the triangulation $\TT_j^{\rm[i]} = \TT'_\ell
\in \refine(\TT_j)$ and a corresponding discrete function $\phi_j^{(j)}\in\PP^{p-1}(\TT_\ell'|_\Gamma)$ such that 
\begin{align}\label{eq:step1:termination} 
 \norm{\phi^{(j)}_\star-\phi_j^{(j)}}{H^{-1/2}(\Gamma)}\le\Ci\,\gamma^j,
\end{align}
where $\Ci>0$ depends only on $\Crel$, $\Cstab$, and $\Gamma$. 

{\rm(b)} Suppose that $\alpha>0$ is sufficiently small in the sense of Proposition~\ref{prop:uzawa:continuous} and that $0<q<1$ is the resulting contraction constant of the continuous Uzawa iteration. Suppose $q<\gamma$. 
Moreover, suppose that there exists $\Cinit>0$ such that for all $j\ge1$, the initial guess $\phi_{j,0}^{(j)} = \phi_{j-1}^{(j)} \in \PP^{p-1}(\TT_{j-1}|_\Gamma)$ for Algorithm~\ref{algorithm:adaptive} satisfies
\begin{align}\label{eq:step1:initial}
 \norm{\phi_{j-1}^{(j),\star} - \phi_{j-1}^{(j)}}{H^{-1/2}(\Gamma)}
 \le \Cinit\,\norm{\phi_{j-1}^{(j),\star} - \phi_{j-1}^{(j-1)}}{H^{-1/2}(\Gamma)},
\end{align}
where $\phi_{j-1}^{(j),\star} \in \PP^{p-1}(\TT_{j-1}|_\Gamma)$ is the best approximation of $\phi^{(j)}_\star$ in $\PP^{p-1}(\TT_{j-1}|_\Gamma)$ with respect to $\norm{\cdot}{H^{-1/2}(\Gamma)}$.
Then, the number $\ell\in\N_0$ of iterations in Algorithm~\ref{algorithm:adaptive} is uniformly bounded for all $j\in\N$, i.e., $\ell\le L$, where $L>0$ depends only on $\alpha$, $\Ci$, $\Cii$, $\Cinit$, $\gamma$, $p$ as well as on uniform $\sigma$-shape regularity of the triangulation $\TT_j\in\refine(\TT_0)$ and on $\Gamma$.
\end{theorem}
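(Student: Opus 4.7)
Part~(a) is essentially a corollary of Proposition~\ref{theorem:adaptive}, applied with $\HH=H^{-1/2}(\Gamma)$, $\XX_\bullet=\PP^{p-1}(\TT_\bullet|_\Gamma)$, and $\edual{\cdot}{\cdot}=\ip{\slo\cdot}{\cdot}_\Gamma$. Ellipticity of $\slo$ gives $\enorm{\cdot}\simeq\norm{\cdot}{H^{-1/2}(\Gamma)}$ (with constants depending only on $\Gamma$), and Lemma~\ref{lemma:bem:estimator} supplies axioms~\eqref{axiom:stability}--\eqref{axiom:reliability}. Choosing $\tau=\gamma^j$ translates Proposition~\ref{theorem:adaptive} directly into~\eqref{eq:step1:termination} with $\Ci$ depending only on $\Crel,\Cstab,\Gamma$.

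For part~(b), the plan is to show that the initial quasi-error
\begin{align*}
\Delta_0 := \enorm{\phi^{(j)}_\star-\phi_{j-1}^{(j)}}^2 + \kctr\,\varrho_0(\phi_{j-1}^{(j)})^2
\end{align*}
of Algorithm~\ref{algorithm:adaptive} at Uzawa step $j$ is bounded by a constant times $\gamma^{2(j-1)}$, independently of~$j$. Granted this, the contraction $\Delta_\ell\le\qctr^\ell\Delta_0$ from the proof of Proposition~\ref{theorem:adaptive}, combined with $\enorm{\psi_\ell^\star-\psi_\ell}_P^2\lesssim\enorm{\psi-\psi_\ell}^2\le\Delta_\ell$ and $\varrho_\ell(\psi_\ell)^2\le\kctr^{-1}\Delta_\ell$, forces the stopping criterion~\eqref{eq:algorithm:stopping} at level $\tau^2=\gamma^{2j}$ after at most $L\sim\log(1/\gamma^{2})/\log(1/\qctr)$ adaptive steps, uniformly in~$j$.

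To bound $\enorm{\phi^{(j)}_\star-\phi_{j-1}^{(j)}}$, I would use Pythagoras with respect to the best approximation $\phi_{j-1}^{(j),\star}$, assumption~\eqref{eq:step1:initial}, and the best-approximation property to reduce it to a multiple of $\norm{\phi^{(j)}_\star-\phi_{j-1}^{(j-1)}}{H^{-1/2}(\Gamma)}$. A triangle-inequality split via $\phi^{(j-1)}_\star$ then controls the first piece by $\norm{u_{j-1}^{(j-1)}-u_{j-2}^{(j-2)}}{H^1(\Omega)}\lesssim\gamma^{j-1}$ using the identity $\slo(\phi^{(j)}_\star-\phi^{(j-1)}_\star)=(\dlo-1/2)\gamma_0(u_{j-1}^{(j-1)}-u_{j-2}^{(j-2)})$ together with the mapping properties of $\slo^{-1},\dlo,\gamma_0$ and Theorem~\ref{prop:uzawa:ideal} (which for $q<\gamma$ yields $\kappa=\gamma$ and hence $\norm{u-u_k^{(k)}}{H^1(\Omega)}\lesssim\gamma^k$); the second piece is $\le\Ci\gamma^{j-1}$ by part~(a) applied at Uzawa level~$j-1$.

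To bound $\varrho_0(\phi_{j-1}^{(j)})$, I would apply stability~\eqref{axiom:stability} to reduce it to $\varrho_0(\phi_{j-1}^{(j-1)})$ plus a controlled perturbation, then compare the former to the estimator value at termination of step~[i] of Uzawa iteration~$j-1$. That previous terminal value is bounded by $\gamma^{j-1}$ via the stopping criterion, and it does not grow when passing to the refined mesh $\TT_{j-1}$ (a consequence of~\eqref{axiom:stability} with $\psi_\circ=\psi_\bullet$ and~\eqref{axiom:reduction}), while the two residuals differ only by $(\dlo-1/2)\gamma_0(u_{j-1}^{(j-1)}-u_{j-2}^{(j-2)})$. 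The technical crux, which I expect to be the main obstacle, is to bound the contribution of this difference to the weighted-residual estimator, i.e.\ $\sum_E|E|^{1/(d-1)}\norm{\nabla_\Gamma(\dlo-1/2)\gamma_0(u_{j-1}^{(j-1)}-u_{j-2}^{(j-2)})}{L^2(E)}^2$, by a constant times $\norm{u_{j-1}^{(j-1)}-u_{j-2}^{(j-2)}}{H^1(\Omega)}^2$ with constants depending only on $\sigma$-shape regularity, $p$, and $\Gamma$. This balances the $H^{1/2}\to H^{1/2}$ mapping property of $\dlo$ against the mesh weights by exploiting that $u_{j-1}^{(j-1)}-u_{j-2}^{(j-2)}\in\SS^p(\TT_{j-1})$ is piecewise polynomial, and is the step where the uniformity of $L$ in~$j$ ultimately rests.
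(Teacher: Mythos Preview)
Your plan is correct and mirrors the paper's proof closely: part~(a) via Proposition~\ref{theorem:adaptive} and the norm equivalence $\enorm{\cdot}\simeq\norm{\cdot}{H^{-1/2}(\Gamma)}$, and part~(b) by bounding the initial quasi-error at Uzawa step~$j$ by $C\gamma^{2(j-1)}$ and invoking the contraction~\eqref{eq:abstract:contraction}. The technical crux you identify is resolved in the paper via the local inverse estimates for $\slo$ and $\dlo$ from~\cite{invest}, which give $\norm{h^{1/2}\nabla_\Gamma\slo\psi}{L^2(\Gamma)}\lesssim\norm{\psi}{H^{-1/2}(\Gamma)}$ and $\norm{h^{1/2}\nabla_\Gamma(\dlo-1/2)v}{L^2(\Gamma)}\lesssim\norm{v}{H^{1/2}(\Gamma)}$ for \emph{general} arguments (no discreteness needed); the only ingredient you omit is the data-approximation difference $u_{0,j-1}-u_{0,j-2}$ appearing in the residual, which the paper controls by~\eqref{eq:lemma:bem:estimator}.
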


\begin{remark}
Note that~\eqref{eq:step1:initial} allows the choice $\phi_{j,0}^{(j)} = \phi_{j-1}^{(j-1)}$. However, in our implementation, we obtain $\phi_{j,0}^{(j)}$ by one CG iteration with initial value $\phi_{j-1}^{(j-1)}$. Then, $\Cinit$ depends only on the norm equivalence $\enorm\cdot \simeq \norm{\cdot}{H^{-1/2}(\Gamma)}$ and hence on $\Gamma$.\qed
\end{remark}

\begin{proof}[Proof of Theorem~\ref{prop:bem:estimator}]
To prove~(a), note the norm equivalence $\enorm{\chi} :=
\ip{V\chi}{\chi}_\Gamma^{1/2}\simeq\norm{\chi}{H^{-1/2}(\Gamma)}$ for all $\chi\in H^{-1/2}(\Gamma)$. The first claim
together with the estimate~\eqref{eq:step1:termination} follows from Proposition~\ref{theorem:adaptive}, where $\Ci \simeq \Cstp$ with hidden norm equivalence constants.

To prove~(b), note that the number of iterations is finite for $j=1$. Without loss of generality, we may hence
suppose $j\ge2$. Let $\TT_{j,\ell} = \TT_\ell'$ be the $\ell$-th adaptive mesh in Algorithm~\ref{algorithm:adaptive} (in the $j$-th iteration of Algorithm~\ref{algorithm:uzawa:ideal}). 
Let $\phi^{(j)}_{j,\ell} \in \PP^{p-1}(\TT_{j,\ell}|_\Gamma)$ be the corresponding approximation.
Recall that Algorithm~\ref{algorithm:adaptive} guarantees that
\begin{align*}
 \enorm{\phi^{(j)}_\star-\phi^{(j)}_{j,\ell}}^2 + \kctr\mu_{j,\ell}(\phi^{(j)}_{j,\ell})^2
 \le \qctr^\ell \,
 \big(\enorm{\phi^{(j)}_\star-\phi^{(j)}_{j,0}}^2 + \kctr\mu_{j,0}(\phi^{(j)}_{j,0})^2\big)
 \quad\text{for all }\ell\ge0.
\end{align*} 
This proves that
\begin{align*}
 \mu_{j,\ell}(\phi^{(j)}_{j,\ell})^2
 \le \kctr^{-1}\qctr^\ell\,\big(\enorm{\phi^{(j)}_\star-\phi^{(j)}_{j,0}}^2 + \kctr\mu_{j,0}(\phi^{(j)}_{j,0})\big)^2
 \quad\text{for all }\ell\ge0.
\end{align*}
To conclude the proof of~(b), it only remains to show that 
\begin{align}\label{eq0:prop:bem:estimator}
 \enorm{\phi^{(j)}_\star-\phi^{(j)}_{j,0}} + \mu_{j,0}(\phi^{(j)}_{j,0}),
 \le C'\,\gamma^{j-1}
\end{align}
where $C'>0$ is independent of $j$. For sufficiently large $\ell=L$ (which does \emph{not} depend on $j$) and $\tau = \gamma^j$, the stopping criterion~\eqref{eq:algorithm:stopping} is then satisfied and hence Algorithm~\ref{algorithm:adaptive} terminates for some $\ell\le L$.

For the ease of presentation, we suppose $\enorm\cdot = \norm{\cdot}{H^{-1/2}(\Gamma)}$ so that all estimates hold up to norm equivalence constants (which, however, depend only on $\Gamma$).
The proof of~\eqref{eq0:prop:bem:estimator} is split into several steps.

{\bf Step~1.} Recall that $\TT_{j,0}=\TT_{j-1} = \TT_{j-1}^{\rm[ii]}\in\refine(\TT_{j-1}^{\rm[i]})$. 
For $k\in\{j-1,j\}$, let $\phi^{(k),\star}_{j,0}\in\PP^{p-1}(\TT_{j,0}|_\Gamma)$ be the best approximation of $\phi^{(k)}_\star$  in $\PP^{p-1}(\TT_{j,0}|_\Gamma)$ with respect to $\norm{\cdot}{H^{-1/2}(\Gamma)}$. Then, the triangle inequality and elementary properties of the orthogonal projection prove that
\begin{align*}
 \enorm{\phi_\star^{(j)} \!-\! \phi_{j,0}^{(j)}}
 &\le \enorm{\phi_\star^{(j)} \!-\! \phi_\star^{(j-1)}}
 + \enorm{\phi_\star^{(j-1)} \!-\! \phi_{j,0}^{(j-1),\star}}
 + \enorm{\phi_{j,0}^{(j-1),\star} \!-\! \phi_{j,0}^{(j),\star} }
 + \enorm{\phi_{j,0}^{(j),\star} \!-\! \phi_{j,0}^{(j)}}
 \\&
 \le 2\,\enorm{\phi_\star^{(j)} \!-\! \phi_\star^{(j-1)}}
 + \enorm{\phi_\star^{(j-1)} \!-\! \phi_{j-1}^{(j-1)}}
 + \enorm{\phi_{j,0}^{(j),\star} \!-\! \phi_{j,0}^{(j)}}
 \\&
 \reff{eq:step1:initial}\le 2\,\enorm{\phi_\star^{(j)} - \phi_\star^{(j-1)}} 
 + \enorm{\phi_\star^{(j-1)} - \phi_{j-1}^{(j-1)}}
 + \Cinit\,\enorm{\phi_{j,0}^{(j),\star} - \phi_{j-1}^{(j-1)}}
\end{align*}
and
\begin{align*}
 \enorm{\phi_{j,0}^{(j),\star} - \phi_{j-1}^{(j-1)}}
 &\le 
 \enorm{\phi_{j,0}^{(j),\star} - \phi_{j,0}^{(j-1),\star}}
 + \enorm{\phi_{j,0}^{(j-1),\star} - \phi_{j-1}^{(j-1)}}
 \\
 &\le \enorm{\phi_\star^{(j)} - \phi_\star^{(j-1)}}
  + \enorm{\phi_\star^{(j-1)} - \phi_{j-1}^{(j-1)}}.
\end{align*}
Combining these two estimates, we see that
\begin{align*}
 \enorm{\phi_\star^{(j)} - \phi_{j,0}^{(j)}}
 &\le (2+\Cinit)\,\enorm{\phi_\star^{(j)} - \phi_\star^{(j-1)}}
 + (1+\Cinit)\, \enorm{\phi_\star^{(j-1)} - \phi_{j-1}^{(j-1)}}
 \\&
 \reff{eq:step1:termination}\le
 (2+\Cinit)\,\enorm{\phi_\star^{(j)} - \phi_\star^{(j-1)}} + \Ci\,\gamma^{j-1}.
\end{align*}
With stability of $\slo^{-1}$ and $\dlo$, Proposition~\ref{prop:uzawa:ideal} (where $\kappa=\gamma>q$ is used) proves that
\begin{align*}
 &\enorm{\phi_\star^{(j)}-\phi_\star^{(j-1)}}
 \simeq \norm{(\dlo-1/2)\gamma_0(u^{(j-1)}_{j-1}-u^{(j-2)}_{j-2})}{H^{1/2}(\Gamma)}
 \lesssim \norm{u^{(j-1)}_{j-1}-u^{(j-2)}_{j-2}}{H^1(\Omega)}
 \\&\qquad
 \le \norm{u-u^{(j-1)}_{j-1}}{H^1(\Omega)} + \norm{u-u^{(j-2)}_{j-2}}{H^1(\Omega)}
 \reff{eq:uzawa:ideal}\lesssim (1+\gamma^{-1})\,\gamma^{j-1}.
\end{align*}
The hidden constants depend only on $\alpha$, $\Ci$, $\Cii$, $\gamma$, and $\Gamma$.
Overall, we thus obtain that
$$\enorm{\phi_\star^{(j)}-\phi_{j,0}^{(j)}}\lesssim \gamma^{j-1},
$$%
where the hidden constant depends only on $\alpha$, $\Ci$, $\Cii$, $\Cinit$, $\gamma$, and $\Gamma$.

{\bf Step~2a.}
Since Algorithm~\ref{algorithm:adaptive} terminated in the $(j-1)$-th iteration, the stopping criterion~\eqref{eq:algorithm:stopping} with $\tau=\gamma^{j-1}$ implies that $\mu_{j-1}(\phi_{j-1}^{(j-1)}) \le \gamma^{j-1}$.
To simplify notation, let $h_\bullet\in L^\infty(\Omega)$ be the local mesh-size, $h_\bullet|_E = |E|^{1/(d-1)}$. Recall that $\TT_{j,0}=\TT_{j-1} = \TT_{j-1}^{\rm[ii]}\in\refine(\TT_{j-1}^{\rm[i]})$ and that $\mu_{j-1}(\cdot)$ is associated with $\TT_{j-1}^{\rm[i]}$. Since $\TT_{j-1} = \TT_{j-1}^{\rm[ii]}\in\refine(\TT_{j-1}^{\rm[i]})$, this proves that 
\begin{align*}
&\Big(\norm{h_{j-1}^{1/2}\nabla_\Gamma\big((\dlo-1/2)(\gamma_0u^{(j-2)}_{j-2}-u_{0,j-2})-V\phi^{(j-1)}_{j-1}\big)}{L^2(\Gamma)}^2 
+ \norm{h_{j-1}^{1/2}(1-\Pi_{j-1})\nabla_\Gamma u_0}{L^2(\Gamma)}^2\Big)^{1/2}
\\&\quad
\le \mu_{j-1}(\phi_{j-1}^{(j-1)}) \le \gamma^{j-1}.
\end{align*}

{\bf Step~2b.}
We employ the local inverse estimate for $\dlo$ from~\cite{invest} to see that
\begin{align*}
  &\norm{h_{j-1}^{1/2}\nabla_\Gamma(\dlo-1/2)\big(\gamma_0(u^{(j-1)}_{j-1}-u^{(j-2)}_{j-2})-(u_{0,j-1}-u_{0,j-2})\big)}{L^2(\Gamma)}
 \\&\quad
 \lesssim \norm{\gamma_0(u^{(j-1)}_{j-1}-u^{(j-2)}_{j-2})-(u_{0,j-1}-u_{0,j-2})}{H^{1/2}(\Gamma)}
 \\&\quad 
 \le \norm{u^{(j-1)}_{j-1}-u^{(j-2)}_{j-2}}{H^1(\Omega)} + \norm{u_{0,j-1}-u_{0,j-2}}{H^{1/2}(\Gamma)}.
\end{align*}
As in Step~1, the first term is estimated by 
$\norm{u^{(j-1)}_{j-1}-u^{(j-2)}_{j-2}}{H^1(\Omega)}\lesssim(1+\gamma^{-1})\gamma^{j-1}.$ 
The second term is estimated with~\eqref{eq:lemma:bem:estimator} as
$\norm{u_{0,j-1}-u_{0,j-2}}{H^{1/2}(\Gamma)} \lesssim \mu_{j-2}(\phi_{j-2}^{(j-2)}) \le \gamma^{j-2}.$
Altogether, we obtain
$$
\norm{h_{j-1}^{1/2}\nabla_\Gamma(\dlo-1/2)\big(\gamma_0(u^{(j-1)}_{j-1}-u^{(j-2)}_{j-2})-(u_{0,j-1}-u_{0,j-2})\big)}{L^2(\Gamma)}
\lesssim (1+\gamma^{-1})\gamma^{j-1},
$$
where the hidden constant depends only on $\alpha$, $\Ci$, $\Cii$, $\Cinit$, $\gamma$, $p$, $\Gamma$, and 
$\sigma$-shape regularity of $\TT_j$.

{\bf Step~2c.} 
We employ the local inverse estimate for $\slo$ from~\cite{invest} to see that
\begin{align*}
 &\norm{h_{j-1}^{1/2}\nabla_\Gamma\slo(\phi_{j,0}^{(j)}-\phi_{j-1}^{(j-1)}}{L^2(\Gamma)}
 \le \norm{\phi_{j,0}^{(j)}-\phi_{j-1}^{(j-1)}}{H^{-1/2}(\Gamma)}
  = \enorm{\phi_{j,0}^{(j)}-\phi_{j-1}^{(j-1)}}.
\end{align*}
As in Step~1, it holds that
\begin{align*}
 \enorm{\phi_{j,0}^{(j)}-\phi_{j-1}^{(j-1)}}
 \le \enorm{\phi_{j,0}^{(j)}-\phi_\star^{(j)}}
 + \enorm{\phi_\star^{(j)} - \phi_\star^{(j-1)}}
 + \enorm{\phi_\star^{(j-1)} - \phi_{j-1}^{(j-1)}}
 \lesssim \gamma^{j-1},
\end{align*}
where the hidden constant depends only on $\alpha$, $\Ci$, $\Cii$, $\Cinit$, $\gamma$, $\sigma$-shape regularity of $\TT_j$, the polynomial degree $p$, and on $\Gamma$.

{\bf Step~2d.}
Recall that $h_{j,0} = h_{j-1}$. The combination of Step~2a--2c proves
{\small\begin{align*}
 &\mu_{j,0}(\phi_{j,0}^{(j)})
 = \Big(\norm{h_{j-1}^{1/2}\nabla_\Gamma\big((\dlo\!-\!1/2)(\gamma_0u^{(j-1)}_{j-1}\!-\!u_{0,j-1})\!-\!V\phi^{(j-1)}_{j,0}\big)}{L^2(\Gamma)}^2 
+ \norm{h_{j-1}^{1/2}(1\!-\!\Pi_{j-1})\nabla_\Gamma u_0}{L^2(\Gamma)}^2\Big)^{1/2}
 \\&\quad\le
 \Big(\norm{h_{j-1}^{1/2}\nabla_\Gamma\big((\dlo-1/2)(\gamma_0u^{(j-2)}_{j-2}-u_{0,j-2})-V\phi^{(j-1)}_{j-1}\big)}{L^2(\Gamma)}^2 
+ \norm{h_{j-1}^{1/2}(1-\Pi_{j-1})\nabla_\Gamma u_0}{L^2(\Gamma)}^2\Big)^{1/2}
 \\&\qquad\qquad
 + \norm{h_{j-1}^{1/2}\nabla_\Gamma(\dlo-1/2)\big(\gamma_0(u^{(j-1)}_{j-1}-u^{(j-2)}_{j-2})-(u_{0,j-1}-u_{0,j-2})\big)}{L^2(\Gamma)}
 \\&\qquad\qquad
  + \norm{h_{j-1}^{1/2}\nabla_\Gamma\slo(\phi_{j,0}^{(j)}-\phi_{j-1}^{(j-1)})}{L^2(\Gamma)}
 \\&\quad\lesssim \gamma^{j-1}. 
\end{align*}}%
Overall, the combination of Step~1 and Step~2d verifies~\eqref{eq0:prop:bem:estimator} and hence concludes the proof.

\end{proof}

\subsection{Realization of step~[ii] of Uzawa iteration}
\label{section:uzawa_step_ii}%
Step~[ii] of Algorithm~\ref{algorithm:uzawa:ideal} will be realized by means of Algorithm~\ref{algorithm:adaptive}, where 
$$
\psi := w_\star^{(j)} \in H^1(\Omega)=:\HH,
\quad 
\XX_\bullet := \SS^{p}(\TT_\bullet), 
\quad\text{and}\quad 
\edual\cdot\cdot:
= \ip{\Riesz(\cdot)}{(\cdot)}_\Omega.
$$
Note that $\enorm\cdot = \norm\cdot{H^1(\Omega)}$.
We employ a weighted-residual error estimator similar to, e.g.,~\cite{aoAposteriori,verfuerth}. 
We suppose additional regularity $\phi_0\in L^2(\Gamma)$.
Recall that
$\Riesz w_\star^{(j)} = f + \gamma_0'\phi_0 - (\FE u_{j-1}^{(j-1)} -\gamma_0' \phi_{j}^{(j)}) \in \widetilde
H^{-1}(\Omega)$. 
Therefore, the estimator reads
\begin{align*}
  \eta_\bullet(T,\psi_\bullet)^2 &:= |T|^{2/d} \norm{f+\div(\material(\nabla u_{j-1}^{(j-1)}) + \nabla \psi_\bullet) 
  -b(\nabla u_{j-1}^{(j-1)}) - c(u_{j-1}^{(j-1)}) - \psi_\bullet }{L^2(T)}^2 \\
  &\qquad + |T|^{1/d} \norm{ [(\material(\nabla u_{j-1}^{(j-1)})+\nabla\psi_\bullet)\cdot\normal]}{L^2(\partial T \setminus\Gamma)}^2 \\
  &\qquad + |T|^{1/d} \norm{\phi_0+\phi_j^{(j)} - (\material(\nabla u_{j-1}^{(j-1)})+\nabla\psi_\bullet)\cdot\normal}{L^2(\partial T\cap \Gamma)}^2.
\end{align*}
The following observation goes back to~\cite{ckns}, where the properties~\eqref{axiom:stability}--\eqref{axiom:reduction} are implicitly proved in~\cite[Section~3.1]{ckns}.

\begin{lemma}\label{lemma:fem:estimator}
The error estimator $\eta_\bullet(\cdot)$ satisfies the assumptions~\eqref{axiom:stability}--\eqref{axiom:reliability} from Section~\ref{section:adaptive}, where $\Cstab$, $\Crel$, and $0<\qred<1$ depend only on the mesh-refinement strategy $\refine(\cdot)$ and $\sigma$-shape regularity of $\TT_0$.
\end{lemma}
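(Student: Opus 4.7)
The plan is to interpret the entire step~[ii] of Algorithm~\ref{algorithm:uzawa:ideal} as the adaptive Galerkin solution of a \emph{linear}, symmetric, coercive elliptic problem with \emph{fixed} right-hand side data coming from the previous Uzawa step (namely $u_{j-1}^{(j-1)}$ and $\phi_j^{(j)}$), and then to apply the classical residual-based a~posteriori analysis of~\cite{ckns}. Since $\edual\cdot\cdot=\ip{\nabla\cdot}{\nabla\cdot}_\Omega+\ip\cdot\cdot_\Omega$ induces the full $H^1(\Omega)$-norm and $\psi_\bullet^\star$ is the Galerkin projection of $\psi=w_\star^{(j)}$, the underlying PDE is exactly the Yukawa equation $\RR w_\star^{(j)}=f+\gamma_0'\phi_0-\AA u_{j-1}^{(j-1)}+\gamma_0'\phi_j^{(j)}$ with natural Neumann data $\phi_0+\phi_j^{(j)}-(\material(\nabla u_{j-1}^{(j-1)})+\nabla w_\star^{(j)})\cdot\normal$ on $\Gamma$, and the three terms in $\eta_\bullet(T,\psi_\bullet)^2$ are precisely its volume residual, interior jump, and Neumann boundary residual.

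For~\eqref{axiom:stability}, I would follow~\cite[Section~3.1]{ckns} elementwise: on a non-refined element $T\in\TT'_\bullet\cap\TT'_\circ$ the data $\AA u_{j-1}^{(j-1)}$, $\phi_j^{(j)}$, $f$, $\phi_0$ enter identically in $\eta_\bullet$ and $\eta_\circ$, so they cancel, and it only remains to control contributions involving $\psi_\circ-\psi_\bullet\in\SS^p(\TT'_\circ)$. The triangle inequality together with a polynomial inverse estimate for the volume term and a scaled trace inequality for the face terms bounds each contribution by $\enorm{\psi_\circ-\psi_\bullet}$ (restricted to the patch of $T$); $\sigma$-shape regularity supplies uniform constants and finite overlap. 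For~\eqref{axiom:reduction}, the crucial observations are that $|T'|\le\qref|T|$ yields reduction factors $\qref^{2/d}$ and $\qref^{1/d}$ in the two mesh-size weights, and that interior jumps across the newly created faces \emph{inside} $T$ vanish because $\psi_\bullet|_T$ is polynomial and $u_{j-1}^{(j-1)}|_T$ is polynomial as well (recall $\TT'_\bullet\in\refine(\TT_j^{\rm[i]})\subseteq\refine(\TT_{j-1})$). Summing over children of each refined element then gives~\eqref{axiom:reduction}.

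The main work is~\eqref{axiom:reliability}. Here I would invoke Galerkin orthogonality~\eqref{eq:abstract:discrete} for $e:=\psi-\psi_\bullet^\star$ together with a Scott--Zhang quasi-interpolant $J_\bullet\colon H^1(\Omega)\to\SS^p(\TT'_\bullet)$ obeying the local estimates $\|e-J_\bullet e\|_{L^2(T)}\lesssim|T|^{1/d}\|e\|_{H^1(\omega_T)}$ and $\|e-J_\bullet e\|_{L^2(\partial T)}\lesssim|T|^{1/(2d)}\|e\|_{H^1(\omega_T)}$. Writing $\enorm e^2=\edual{e}{e-J_\bullet e}$, substituting the defining equation for $\RR w_\star^{(j)}$, and performing element-wise integration by parts yields the standard decomposition of the residual into exactly the volume, interior-jump, and Neumann-boundary contributions that make up the three terms of $\eta_\bullet(T,\psi_\bullet^\star)^2$. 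An elementwise Cauchy--Schwarz combined with the Scott--Zhang estimates and finite overlap of the patches $\omega_T$ then yields
\begin{align*}
 \enorm e^2\lesssim\eta_\bullet(\psi_\bullet^\star)\,\enorm e,
\end{align*}
and division by $\enorm e$ gives~\eqref{axiom:reliability} with $\Crel$ depending only on $\sigma$-shape regularity.

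The one genuine subtlety I foresee is making sure the boundary residual is pointwise meaningful: the pairing $\ip{\phi_0+\phi_j^{(j)}}{\gamma_0 v}_\Gamma$ appearing in the right-hand side of $\RR w_\star^{(j)}$ must be interpretable as an $L^2(\Gamma)$-integral so that the $L^2(\partial T\cap\Gamma)$-norms in $\eta_\bullet$ are well-defined. This is precisely why the text assumes $\phi_0\in L^2(\Gamma)$, while $\phi_j^{(j)}\in\PP^{p-1}(\TT_j^{\rm[i]}|_\Gamma)\subseteq L^2(\Gamma)$ is automatic. Beyond this, every step is routine and the resulting constants $\Cstab$, $\Crel$, $\qred$ indeed depend only on $\refine(\cdot)$ and on $\sigma$-shape regularity of $\TT_0$, uniformly over all $\TT'_\bullet\in\refine(\TT'_0)$.
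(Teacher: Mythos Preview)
Your proposal is correct and follows essentially the same route as the paper's sketch: both treat step~[ii] as a linear coercive problem with frozen data, derive~\eqref{axiom:stability} from inverse and trace inequalities after the fixed data cancel, derive~\eqref{axiom:reduction} from the mesh-size weights, and derive~\eqref{axiom:reliability} via Galerkin orthogonality, a Cl\'ement/Scott--Zhang interpolant, and elementwise integration by parts. Two minor remarks: your observation that the interior jumps across newly created faces vanish is the right point for~\eqref{axiom:reduction}, but strictly speaking it is the \emph{continuity} of $A(\nabla u_{j-1}^{(j-1)})$ on $T$ (not its being polynomial) that kills the jump; and the paper singles out as the only non-routine step for~\eqref{axiom:reliability} that $\FE u_{j-1}^{(j-1)}\notin L^2(\Omega)$ in general, which is handled precisely by the elementwise integration by parts you already invoke.
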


\begin{proof}[Sketch of proof]
Throughout, we suppose that $\TT_\circ\in\refine(\TT_\bullet)$ and $\TT_\bullet\in\refine(\TT_0)$.
To see~\eqref{axiom:stability}, let $T\in\TT_\circ\cap\TT_\bullet$ and $\psi_\circ\in\SS^p(\TT_\circ)$,
$\psi_\bullet\in\SS^p(\TT_\bullet)$. Then,
\begin{align*}
  |\eta_\circ(T,\psi_\circ)-\eta_\bullet(T,\psi_\bullet)|^2 &\leq |T|^{2/d}
  \norm{\Delta(\psi_\circ-\psi_\bullet)-(\psi_\circ-\psi_\bullet)}{L^2(T)}^2 
  \\ &\quad + |T|^{1/d}
  \big(\norm{[\nabla(\psi_\circ-\psi_\bullet)\cdot\normal]}{L^2(\partial T\backslash\Gamma)}^2
  +\norm{\nabla (\psi_\circ-\psi_\bullet)\cdot\normal}{L^2(\partial T\cap\Gamma)}^2\big).
 \end{align*}
Together with an inverse inequality and the trace inequality, we hence obtain that
\begin{align*}
  &|\eta_\circ(\TT_\circ\cap\TT_\bullet,\psi_\circ)-\eta_\bullet(\TT_\circ\cap\TT_\bullet,\psi_\bullet)|^2
  \\
  &\qquad \leq \sum_{T\in\TT_\circ\cap\TT_\bullet} \Big( |T|^{2/d} \norm{\Delta(\psi_\circ-\psi_\bullet)-(\psi_\circ-\psi_\bullet)}{L^2(T)}^2 + 2\,|T|^{1/d}
  \norm{\nabla(\psi_\circ-\psi_\bullet)}{L^2(\partial T)}^2 \Big)
  \\
  &\qquad \leq \Cstab \enorm{\psi_\circ-\psi_\bullet}^2.
\end{align*}
The constant $\Cstab>$ depends only on
the mesh-refinement strategy $\refine(\cdot)$, $\sigma$-shape regularity of $\TT_0$, and the polynomial degree $p$.

To see~\eqref{axiom:reduction}, note that all local contributions to the estimator $\eta_\bullet(\psi_\bullet)^2$
are weighted with either $|T|^{2/d}$ or $|T|^{1/d}$.
Therefore,~\eqref{axiom:reduction} simply follows from reduction of refined elements; see Section~\ref{sec:refine}.

Finally, reliability~\eqref{axiom:reliability} follows with the same techniques as in~\cite{aoAposteriori,verfuerth}.
The only difference is that we have to tackle the term $\FE u_{j-1}^{(j-1)}$ from the right-hand side. In general, this
term is not in $L^2(\Omega)$. 
However, elementwise integration by parts proves
\begin{align*}
  &\ip{\FE u_{j-1}^{(j-1)}}v_\Omega = \sum_{T\in\TT_\bullet} \Big(\ip{\material(\nabla u_{j-1}^{(j-1)})}{\nabla v}_T 
  + \ip{b(\nabla u_{j-1}^{(j-1)}) + c(u_{j-1}^{(j-1)})}{v}_T\Big) 
  \\&\quad
  = \sum_{T\in\TT_\bullet} \Big(\ip{-\div\material(\nabla u_{j-1}^{(j-1)})}{v}_T + \ip{\material(\nabla u_{j-1}^{(j-1)})\cdot\normal}{v}_{\partial T} 
 + \ip{b(u_{j-1}^{(j-1)}) + c(u_{j-1}^{(j-1)})}{v}_T\Big)
\end{align*}
for all $v\in H^1(\Omega)$. With this identity, the residual $\Riesz w_\star^{j}-f -\gamma_0'\phi_{0}
+\FE(u_{j-1}^{(j-1)})-\gamma_0'\phi_{j}^{(j)}$ can be estimated with standard techniques.
\end{proof}

Because of Lemma~\ref{lemma:fem:estimator}, we can employ Algorithm~\ref{algorithm:adaptive} to realize step~[ii] of
Algorithm~\ref{algorithm:uzawa:ideal}. 
Moreover, provided that the inverse-type inequality
\begin{align}\label{eq:invest:fem}
  &\sum_{T\in\TT_\bullet} \Big(|T|^{2/d} \norm{\div(\material(\nabla v_\bullet)-\material(\nabla w_\bullet)) - b(\nabla v_\bullet) + b(\nabla w_\bullet) - c(v_\bullet)+c(w_\bullet)}{L^2(T)}^2  \nonumber \\
  &\qquad  + |T|^{1/d} \norm{ [\material(\nabla v_\bullet)-\material(\nabla w_\bullet)]\cdot\normal}{L^2(\partial T\setminus \Gamma)}^2 \nonumber
  + |T|^{1/d} \norm{ (\material(\nabla v_\bullet)-\material(\nabla w_\bullet))\cdot\normal}{L^2(\partial T\cap \Gamma)}^2 \Big)
  \\&\qquad \lesssim \norm{v_\bullet-w_\bullet}{H^1(\Omega)}^2
\end{align}
holds for all $v_\bullet,w_\bullet \in \SS^p(\TT_\bullet)$ with some hidden constant that depends only on $\material$, $b$, $c$, the polynomial degree $p$, and $\sigma$-shape regularity of $\TT_\bullet$, the following theorem proves that the number of adaptive
iterations of Algorithm~\ref{algorithm:adaptive} is uniformly bounded.

\begin{theorem}\label{prop:fem:estimator}
Let $0<\gamma<1$, $0<\theta\le1$, and $0\leq \lcg<1$. 
For $j\in\N$, choose $\tau = \gamma^j$ and
$\TT'_0:=\TT_j^{\rm{[i]}}$. Then, the following assertions {\rm(a)--(b)} hold:

{\rm(a)}
After $\ell\in\N_0$ iterations, Algorithm~\ref{algorithm:adaptive} returns the triangulation $\TT_j^{\rm[ii]} = \TT'_\ell
\in \refine(\TT_j^{\rm[i]})$ and a corresponding discrete function 
$w_j^{(j)}\in\SS^{p}(\TT_\ell')$ such that 
\begin{align}\label{eq:step1:termination:fem} 
  \norm{w^{(j)}_\star-w_j^{(j)}}{H^1(\Omega)}\le\Cii\,\gamma^j,
\end{align}
where $\Cii>0$ depends only on $\Crel$, $\Cstab$, and $\Gamma$. 

{\rm(b)} Suppose that $\alpha>0$ is sufficiently small in the sense of Proposition~\ref{prop:uzawa:continuous} and that
$0<q<1$ ist the resulting contraction constant of the continuous Uzawa iteration. Suppose $q<\gamma$. Moreover, suppose that there exists $\Cinit>0$ such that for all $j\ge2$, the initial guess $w_{j,0}^{(j)}\in\SS^p(\TT_j^\mathrm{[i]})$ for Algorithm~\ref{algorithm:adaptive} satisfies
\begin{align}
  \norm{w_{j,0}^{(j),\star}-w_{j,0}^{(j)}}{H^1(\Omega)}
  \le \Cinit\, \norm{w_{j,0}^{(j),\star}-w_{j-1}^{(j-1)}}{H^1(\Omega)},
\end{align}
where $w_{j,0}^{(j),\star}\in\SS^p(\TT_j^\mathrm{[i]})$ is the best approximation of $w_\star^{(j)}$ in $\SS^p(\TT_j^\mathrm{[i]})$ with respect to $\norm\cdot{H^1(\Omega)}$. Finally, suppose
that~\eqref{eq:invest:fem} holds.
Then, the number $\ell\in\N_0$ of iterations in Algorithm~\ref{algorithm:adaptive} is uniformly bounded for all
$j\in\N$, i.e., $\ell\le L$, where $L>0$ depends only on $\alpha$, $\Ci$, $\Cii$, $\Cinit$, $\gamma$, $p$ as well as on uniform
$\sigma$-shape regularity of the triangulation $\TT_j^\mathrm{[i]}\in\refine(\TT_0)$ and on $\Gamma$.
\end{theorem}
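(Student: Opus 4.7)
The plan is to transpose the proof of Theorem~\ref{prop:bem:estimator} into the FEM framework with $\HH = H^1(\Omega)$, $\XX_\bullet = \SS^p(\TT_\bullet)$, and $\edual\cdot\cdot := \ip{\Riesz(\cdot)}{(\cdot)}_\Omega$, so that the induced energy norm coincides with $\norm\cdot{H^1(\Omega)}$ and no norm-equivalence constants enter. Part~(a) is then immediate: I apply Proposition~\ref{theorem:adaptive} using Lemma~\ref{lemma:fem:estimator} to verify axioms~\eqref{axiom:stability}--\eqref{axiom:reliability}, which yields~\eqref{eq:step1:termination:fem} with $\Cii = \Cstp$.

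For part~(b), Proposition~\ref{prop:inexact} provides the contraction factor $\qctr$, so it suffices to show that the initial quasi-error $\norm{w^{(j)}_\star - w^{(j)}_{j,0}}{H^1(\Omega)} + \eta_{j,0}(w^{(j)}_{j,0})$ at the start of Algorithm~\ref{algorithm:adaptive} in the $j$-th Uzawa step is $\lesssim \gamma^{j-1}$, with hidden constant independent of $j$; the tolerance $\tau=\gamma^j$ then forces termination within a uniformly bounded number $L$ of iterations. The case $j=1$ is trivial, so we may assume $j\ge 2$ and argue inductively.

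To bound the initial energy error I mimic Step~1 in the proof of Theorem~\ref{prop:bem:estimator}: the triangle inequality via the best approximation $w_{j,0}^{(j),\star}$ combined with the $\Cinit$-assumption yields
$\norm{w^{(j)}_\star - w^{(j)}_{j,0}}{H^1(\Omega)} \le (1+\Cinit)\,\norm{w^{(j)}_\star - w^{(j-1)}_{j-1}}{H^1(\Omega)}$.
Splitting via $w^{(j-1)}_\star$ and using $\norm{w^{(j-1)}_\star - w^{(j-1)}_{j-1}}{H^1(\Omega)}\le \Cii\gamma^{j-1}$ from part~(a), what remains is $\norm{w^{(j)}_\star - w^{(j-1)}_\star}{H^1(\Omega)}$. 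Subtracting the two Riesz equations gives
\begin{align*}
\Riesz\big(w^{(j)}_\star - w^{(j-1)}_\star\big) = -\big(\AA u^{(j-1)}_{j-1} - \AA u^{(j-2)}_{j-2}\big) + \gamma_0'\big(\phi^{(j)}_j - \phi^{(j-1)}_{j-1}\big),
\end{align*}
and then the isometry of $\Riesz$, Lipschitz continuity of $\AA$, boundedness of $\gamma_0'$, and the $\gamma^{j-1}$-decay of both $u$- and $\phi$-differences from Theorem~\ref{prop:uzawa:ideal} (applied in the regime $\kappa=\gamma>q$) close the bound.

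The harder task is to control the initial estimator $\eta_{j,0}(w^{(j)}_{j,0})$. My plan is to write $\eta_{j,0}(w^{(j)}_{j,0}) \le \tilde\eta_{j,0}(w^{(j-1)}_{j-1}) + D$, where $\tilde\eta_{j,0}$ denotes the estimator on $\TT_j^\mathrm{[i]}$ but built with the \emph{old} data $(u^{(j-2)}_{j-2},\phi^{(j-1)}_{j-1})$ and the old function $w^{(j-1)}_{j-1}$, while $D$ collects the data- and function-difference contributions. Holding the function fixed, the purely mesh-geometric axioms~\eqref{axiom:stability}--\eqref{axiom:reduction} applied to the pair $\TT_{j-1}\prec\TT_j^\mathrm{[i]}$ yield $\tilde\eta_{j,0}(w^{(j-1)}_{j-1}) \le \eta_{j-1}(w^{(j-1)}_{j-1}) \le \gamma^{j-1}$, the last inequality being the stopping criterion of Algorithm~\ref{algorithm:adaptive} in the previous Uzawa step. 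The volume and interior-jump contributions in $D$ only contain $\AA$-, $b$-, $c$-, and $\nabla w$-differences, which the hypothesized inverse-type estimate~\eqref{eq:invest:fem} bounds by $\norm{u^{(j-1)}_{j-1}-u^{(j-2)}_{j-2}}{H^1(\Omega)}+\norm{w^{(j)}_{j,0}-w^{(j-1)}_{j-1}}{H^1(\Omega)}\lesssim\gamma^{j-1}$, the latter term being controlled via the energy-error bound from the previous paragraph. The main obstacle is the Neumann boundary term carrying $\phi^{(j)}_j - \phi^{(j-1)}_{j-1}$: to absorb it I invoke the weighted BEM inverse estimate $\norm{h_\bullet^{1/2}(\phi^{(j)}_j - \phi^{(j-1)}_{j-1})}{L^2(\Gamma)} \lesssim \norm{\phi^{(j)}_j - \phi^{(j-1)}_{j-1}}{H^{-1/2}(\Gamma)}$ in the spirit of~\cite{invest} and then bound the right-hand side again by $\gamma^{j-1}$ via Theorem~\ref{prop:uzawa:ideal}. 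Summing the three pieces yields $\eta_{j,0}(w^{(j)}_{j,0}) \lesssim \gamma^{j-1}$, and the quasi-error contraction then terminates Algorithm~\ref{algorithm:adaptive} after at most $L \simeq \lceil \log\gamma\,/\log\qctr\rceil$ steps, independent of $j$.
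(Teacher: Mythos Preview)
Your proof is correct and follows essentially the same route as the paper's own argument: part~(a) via Proposition~\ref{theorem:adaptive} and Lemma~\ref{lemma:fem:estimator}, and part~(b) by mimicking the BEM proof (Theorem~\ref{prop:bem:estimator}), i.e., bounding $\norm{w^{(j)}_\star-w^{(j-1)}_\star}{H^1(\Omega)}$ through the Riesz identity and Lipschitz continuity of $\AA$, then controlling $\eta_{j,0}(w^{(j)}_{j,0})$ by comparison to $\eta_{j-1}(w^{(j-1)}_{j-1})$ plus data-difference terms handled via~\eqref{eq:invest:fem} and the discrete inverse estimate for the $\phi$-contribution. The only cosmetic difference is that the paper absorbs the mesh-change and data-change into a single remainder $R$ rather than introducing your intermediate estimator $\tilde\eta_{j,0}$, but the underlying estimates are identical.
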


\begin{remark}
  We note that the additional assumption~\eqref{eq:invest:fem} on $\FE$ is satisfied if $\FE$ is linear with
  coefficients $\material \in W^{1,\infty}(\Omega;\R^{d\times d})$, $b\in L^\infty(\Omega;\R^d)$, and $c\in L^\infty(\Omega)$.
\end{remark}

\begin{proof}[Proof of Theorem~\ref{prop:fem:estimator}]
To prove~(a), note that $\enorm{v} := \ip{\Riesz v}{v}_\Omega^{1/2} = \norm{v}{H^1(\Omega)}$ for all $v\in H^1(\Omega)$. 
The first claim together with estimate~\eqref{eq:step1:termination:fem} follows from Proposition~\ref{theorem:adaptive}, where $\Cii \simeq \Cstp$.

To prove~(b), we argue as in the proof of Proposition~\ref{prop:bem:estimator}. We may
suppose $j\ge2$. Let $\TT_{j,\ell} = \TT_\ell'$ be the $\ell$-th adaptive mesh in Algorithm~\ref{algorithm:adaptive} (in the $j$-th iteration of Algorithm~\ref{algorithm:uzawa:ideal}). 
Let $w^{(j)}_{j,\ell} \in \SS^p(\TT_{j,\ell})$ be the corresponding approximation.
Recall that Algorithm~\ref{algorithm:adaptive} guarantees that
\begin{align*}
 \eta_{j,\ell}(w^{(j)}_{j,\ell})^2
 \le \kctr^{-1}\qctr^\ell\,\big(\norm{w^{j}_\star-w^{(j)}_{j,0}}{H^1(\Omega)} + \kctr\eta_{j,0}(w^{(j)}_{j,0})\big)^2
 \quad\text{for all }\ell\ge0.
\end{align*}
To conclude the proof of~(b), it only remains to show that %
\begin{align}\label{eq0:prop:fem:estimator}
 \norm{w^{j}_\star-w^{(j)}_{j,0}}{H^1(\Omega)} + \eta_{j,0}(w^{(j)}_{j,0})
 \le C\,\gamma^{j-1}
\end{align}
where $C>0$ is independent of $j$. 

{\bf Step~1.}
Lipschitz continuity of $\FE$, the definitions of the dual norms $\norm\cdot{\widetilde H^{-1}(\Omega)}$, $\norm\cdot{H^{-1/2}(\Gamma)}$, and Proposition~\ref{prop:uzawa:ideal} (where $\kappa=\gamma>q$) prove that
\begin{align*}
  \norm{w_\star^{(j)}-w_\star^{(j-1)}}{H^1(\Omega)} &= \norm{\Riesz(w_\star^{(j)}-w_\star^{(j-1)})}{\widetilde H^{-1}(\Omega)} \\
  &= \norm{\FE u_{j-2}^{(j-2)}-\FE u_{j-1}^{(j-1)} -\gamma_0'\phi_{j-1}^{(j-1)} + \gamma_0'\phi_j^{(j)}}{\widetilde H^{-1}(\Omega)} \\
  &\lesssim \norm{u_{j-2}^{(j-2)}- u_{j-1}^{(j-1)}}{H^1(\Omega)} + \norm{\phi_{j-1}^{(j-1)}-\phi_j^{(j)}}{H^{-1/2}(\Gamma)} 
  \lesssim\gamma^{j-1}.
\end{align*}
The hidden constants depend only on $\alpha$, $\Ci$, $\Cii$, $\gamma$, and $\FE$.
Arguing as in step~1 of the proof of Proposition~\ref{prop:bem:estimator}, we conclude $\norm{w_\star^{(j)}-w_{j,0}^{(j)}}{H^1(\Omega)} \lesssim \gamma^{j-1}$.

{\bf Step~2.}
As above, the stopping criterion~\eqref{eq:algorithm:stopping} implies that $\eta_{j-1}(w_{j-1}^{(j-1)}) \le \gamma^{j-1}$.
Following the proof of Proposition~\ref{prop:bem:estimator}, we obtain that
\begin{align*}
  \eta_{j,0}(w_{j,0}^j) = \eta_{j,0}(w_{j-1}^{(j-1)}) \leq \eta_{j-1}(w_{j-1}^{(j-1)}) + R
\end{align*}
with 
\begin{align*}
  R &:= \sum_{T\in\TT_j^\mathrm{[i]}} \Big( 
  |T|^{2/d} \norm{\Delta(w_{j-1}^{(j-1)}-w_{j,0}^{(j)}) + (w_{j-1}^{(j-1)}-w_{j,0}^{(j)})}{L^2(T)}^2
  \\&\qquad
 + |T|^{2/d} \norm{\div(\material(u_{j-1}^{(j-1)})-\material(u_{j-2}^{(j-2)})) - b(\nabla u_{j-1}^{(j-1)}) + b(\nabla u_{j-2}^{(j-2)}) -c(u_{j-1}^{(j-1)}) + c(u_{j-2}^{(j-2)})}{L^2(T)}^2 \\
  &\qquad + |T|^{1/d} \norm{ [\material(u_{j-1}^{(j-1)})-\material(u_{j-2}^{(j-2)})]\cdot\normal}{L^2(\partial T\setminus \Gamma)}^2 \\
  &\qquad + |T|^{1/d} \norm{ (\material(u_{j-1}^{(j-1)})-\material(u_{j-2}^{(j-2)}))\cdot\normal}{L^2(\partial T\cap \Gamma)}^2 \\
  &\qquad + |T|^{1/d} \norm{\phi_{j}^j-\phi_{j-1}^{(j-1)}}{L^2(\partial T\cap\Gamma)}^2 
  \Big).
\end{align*}
Note that $u_{j-1}^{(j-1)} - u_{j-2}^{(j-2)} \in \SS^p(\TT_j^\mathrm{[i]})$ and $\phi_{j}^{(j)}-\phi_{j-1}^{(j-1)} \in \PP^{p-1}(\TT_j^\mathrm{[i]}|_\Gamma)$. Together with an inverse inequality and the assumption~\eqref{eq:invest:fem}, this proves that
\begin{align*}
  R \lesssim \norm{u_{j-1}^{(j-1)}-u_{j-2}^{(j-2)}}{H^1(\Omega)}^2 + \norm{\phi_j^{(j)}-\phi_{j-1}^{(j-1)}}{H^{-1/2}(\Gamma)}^2 
  + \eta_{j-1}(w_{j-1}^{(j-1)}) + \mu_{j-1}(\phi_{j-1}^{(j-1)}).
\end{align*}
Hence,
\begin{align*}
  \eta_{j,0}(w_{j-1}^{(j-1)}) &\lesssim \eta_{j-1}(w_{j-1}^{(j-1)}) + \mu_{j-1}(\phi_{j-1}^{(j-1)})  + 
  \norm{u_{j-1}^{(j-1)}-u_{j-2}^{(j-2)}}{H^1(\Omega)}^2 + \norm{\phi_j^{(j)}-\phi_{j-1}^{(j-1)}}{H^{-1/2}(\Gamma)}^2 \\
  &\lesssim \gamma^{j-1}.
\end{align*}
This finishes the proof.
\end{proof}

\subsection{Global a posteriori error estimate}
\label{section:globalEstimate}%
\def\JN{\mathcal{B}}
\def\Cglobal{C_\mathrm{glo}}
In this section, we derive a global upper bound for the error $\norm{u-u_{j}^{(j)}}{H^1(\Omega)} +
\norm{\phi-\phi_{j}^{(j)}}{H^{-1/2}(\Gamma)}$.
To that end, let %
\begin{align*}
\JN : H^1(\Omega)\times H^{-1/2}(\Gamma) \to \widetilde H^{-1}(\Omega)\times H^{1/2}(\Gamma), \quad
  \JN(u,\phi) = (\FE u -\gamma_0'\phi,(\tfrac12-\dlo)\gamma_0 u + \slo\phi)
\end{align*}
denote the operator associated to the Johnson-N\'ed\'elec coupling~\eqref{eq:jn}.
Let $w_j^{(j),\star} \in \SS^p(\TT_j^\mathrm{[ii]})$ be the best approximation of $w_\star^{(j)}$ with respect to
$\norm{\cdot}{H^1(\Omega)}$ and let $\phi_j^{(j),\star}\in \PP^{p-1}(\TT_j^\mathrm{[i]}|_\Gamma)$ be the best
approximation of $\phi_\star^{(j)}$ with respect to the $\ip{\cdot}{\slo(\cdot)}_\Gamma$ induced norm.
The upper bound in the next theorem involves the terms
\begin{align*}
  \norm{w_j^{(j),\star}-w_j^{(j)}}{H^1(\Omega)} \quad\text{and}\quad
  \norm{\phi_j^{(j),\star}-\phi_j^{(j)}}{H^{-1/2}(\Gamma)},
\end{align*}
which stem from the fact that we use inexact solvers. In our setting, these terms are evaluated within the PCG
algorithm and hence known a posteriori terms; see Remark~\ref{remark:reliable}.
\begin{theorem}\label{thm:globalEstimate}
  Suppose that $\FE$ is 
  \begin{itemize}
    \item either linear with $c_\FE>0$
    \item or strongly monotone with $c_\FE>c_\dlo/4$,
  \end{itemize}
  where $c_\dlo \in [1/2,1)$ denotes the contraction constant of the double-layer integral operator.
  Then, there holds
  \begin{align*}
    &\Cglobal^{-1}\big(\norm{u-u_{j}^{(j)}}{H^1(\Omega)} + \norm{\phi-\phi_{j}^{(j)}}{H^{-1/2}(\Gamma)}\big) 
    \\&\leq \nu_j := \eta_j(w_j^{(j)}) + \mu_j(\phi_j^{(j)}) + \norm{w_j^{(j)}}{H^1(\Omega)}
    +\norm{w_j^{(j),\star}-w_j^{(j)}}{H^1(\Omega)}
    +\norm{\phi_j^{(j),\star}-\phi_j^{(j)}}{H^{-1/2}(\Gamma)}
  \end{align*}
  The constant $\Cglobal>0$ depends only on $c_\FE$, $C_\FE$, $\Omega$, $\Crel$, $\Cstab$, $\sigma$-shape regularity of
  $\TT_0$, and $\alpha>0$.
\end{theorem}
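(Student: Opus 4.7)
The plan is to dominate the full error by the residual of the Johnson--N\'ed\'elec operator $\JN$ and then to split this residual into pieces that are controlled a posteriori by $\eta_j$, $\mu_j$, $\norm{w_j^{(j)}}{H^1(\Omega)}$, and the two algebraic-error terms arising from the inexact PCG solution. The argument naturally splits into three stages; only the first one is delicate.

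\textbf{Stability of $\JN$.} Under either hypothesis on $c_\FE$, the continuous Johnson--N\'ed\'elec operator $\JN$ is bi-Lipschitz on $H^1(\Omega)\times H^{-1/2}(\Gamma)$. In the linear case, this is the inf--sup stability established in~\cite{sayas09,os,os2014}; in the nonlinear case it follows from the strong monotonicity arguments of~\cite{affkmp,FBlame} combined with the sharpened threshold $c_\FE>c_{\dlo}/4$ discussed in Remark~\ref{rem:jn:discrete}. Together with Lipschitz continuity of $\JN$ (a consequence of Lipschitz continuity of $\AA$ and boundedness of $\gamma_0,\slo,\dlo$), this yields
\begin{align*}
  \norm{u-u_j^{(j)}}{H^1(\Omega)} + \norm{\phi-\phi_j^{(j)}}{H^{-1/2}(\Gamma)}
  \lesssim \norm{R_1}{\widetilde H^{-1}(\Omega)} + \norm{R_2}{H^{1/2}(\Gamma)},
\end{align*}
where $(R_1,R_2):=\JN(u,\phi)-\JN(u_j^{(j)},\phi_j^{(j)})$ denotes the residual.

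\textbf{Rewriting the residual.} Since $\JN(u,\phi)=(f+\gamma_0'\phi_0,(1/2-\dlo)u_0)$, one computes directly that $R_1 = f+\gamma_0'\phi_0 - \AA u_j^{(j)} + \gamma_0'\phi_j^{(j)}$ and $R_2 = (1/2-\dlo)(u_0-\gamma_0 u_j^{(j)}) - \slo\phi_j^{(j)}$. I would then insert the defining relations of the continuous Uzawa increments, namely $\Riesz w_\star^{(j)} = f - \AA u_{j-1}^{(j-1)} + \gamma_0'(\phi_0+\phi_j^{(j)})$ and $\slo\phi_\star^{(j)} = (1/2-\dlo)(u_0-\gamma_0 u_{j-1}^{(j-1)})$, together with $u_j^{(j)} - u_{j-1}^{(j-1)} = \alpha\,w_j^{(j)}$, to obtain
\begin{align*}
  R_1 = \Riesz w_\star^{(j)} - \bigl(\AA u_j^{(j)} - \AA u_{j-1}^{(j-1)}\bigr),
  \qquad
  R_2 = \slo\bigl(\phi_\star^{(j)}-\phi_j^{(j)}\bigr) - \alpha(1/2-\dlo)\gamma_0 w_j^{(j)}.
\end{align*}

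\textbf{Component-wise bounds and conclusion.} Since $\Riesz$ is an isometry, $\norm{\Riesz w_\star^{(j)}}{\widetilde H^{-1}(\Omega)} = \norm{w_\star^{(j)}}{H^1(\Omega)} \le \norm{w_j^{(j)}}{H^1(\Omega)} + \norm{w_\star^{(j)}-w_j^{(j)}}{H^1(\Omega)}$, and Lipschitz continuity of $\AA$ gives $\norm{\AA u_j^{(j)} - \AA u_{j-1}^{(j-1)}}{\widetilde H^{-1}(\Omega)} \le \alpha C_\FE \norm{w_j^{(j)}}{H^1(\Omega)}$. For $R_2$, boundedness of $\slo$, $\dlo$, and $\gamma_0$ controls $\norm{R_2}{H^{1/2}(\Gamma)}$ by $\norm{\phi_\star^{(j)}-\phi_j^{(j)}}{H^{-1/2}(\Gamma)} + \alpha\norm{w_j^{(j)}}{H^1(\Omega)}$. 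The two auxiliary problems defining $w_\star^{(j)}$ and $\phi_\star^{(j)}$ are linear and symmetric, so Remark~\ref{remark:reliable} applies (with the norm equivalence $\enorm\cdot\simeq\norm\cdot{H^{-1/2}(\Gamma)}$ on the BEM side) and delivers
\begin{align*}
  \norm{w_\star^{(j)}-w_j^{(j)}}{H^1(\Omega)} &\lesssim \eta_j(w_j^{(j)}) + \norm{w_j^{(j),\star}-w_j^{(j)}}{H^1(\Omega)}, \\
  \norm{\phi_\star^{(j)}-\phi_j^{(j)}}{H^{-1/2}(\Gamma)} &\lesssim \mu_j(\phi_j^{(j)}) + \norm{\phi_j^{(j),\star}-\phi_j^{(j)}}{H^{-1/2}(\Gamma)}.
\end{align*}
Summing all bounds yields $\norm{R_1}{\widetilde H^{-1}(\Omega)}+\norm{R_2}{H^{1/2}(\Gamma)} \lesssim \nu_j$, and combining with the stability estimate from the first stage concludes the proof. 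The only delicate point is the bi-Lipschitz stability of $\JN$ at the continuous level: this is precisely where the distinction between the linear and the strongly-monotone nonlinear case (and the threshold $c_\FE > c_\dlo/4$) enters, while all remaining estimates are routine consequences of Lipschitz continuity of $\AA$, mapping properties of $\slo$ and $\dlo$, and the reliability identity of Remark~\ref{remark:reliable}.
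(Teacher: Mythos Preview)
Your proof is correct and follows essentially the same route as the paper's: both rely on the bi-Lipschitz stability of $\JN$, rewrite the residual via the defining relations of $w_\star^{(j)}$ and $\phi_\star^{(j)}$, and close with Remark~\ref{remark:reliable}. The only cosmetic difference is that the paper evaluates the $\JN$-residual at the hybrid point $(u_{j-1}^{(j-1)},\phi_j^{(j)})$---where the two components simplify exactly to $\Riesz w_\star^{(j)}$ and $\slo(\phi_\star^{(j)}-\phi_j^{(j)})$---and then passes to $u_j^{(j)}$ by the triangle inequality, whereas you evaluate at $(u_j^{(j)},\phi_j^{(j)})$ and absorb the $\alpha\,w_j^{(j)}$ corrections inside the residual itself.
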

\begin{proof}
  Define $\HH := H^1(\Omega)\times H^{-1/2}(\Gamma)$ and let $\dual\cdot\cdot$ denote the duality pairing
  between $\HH$ and its dual $\HH^*$.
  We stress that $\JN^{-1}$ exists and is Lipschitz continuous, i.e.,
  \begin{align}\label{eq:jnlip}
    \norm{\uu-\vv}{\HH} \lesssim 
    \norm{\JN\uu-\JN\vv}{\HH^*} \quad\text{for all }\uu = (u,\phi), \vv = (v,\psi) \in \HH.
  \end{align}
  To see~\eqref{eq:jnlip} in the case that $\FE$ is strongly monotone with $c_\FE>c_\dlo/4$, one follows~\cite[Section~5.1]{convBEFE}.
  If $\FE$ is linear with $c_\FE>0$, the Johnson-N\'ed\'elec coupling~\eqref{eq:jn} is equivalent to the model
  problem~\eqref{eq:modell}. Note that $(\tfrac12-\dlo)$ is bijective, so that the right-hand side in~\eqref{eq:jn} is
  an arbitrary functional on $\HH$. Therefore, unique solvability of the model
  problem~\eqref{eq:modell} proves that $\JN$ is bijective. The inverse mapping theorem thus implies~\eqref{eq:jnlip}.

  Recall the Richardson iteration  $u_j^{(j)} = u_{j-1}^{(j-1)} + \alpha w_{j}^{(j)}$ in the Uzawa algorithm.
  Therefore,
  \begin{align*}
    \norm{u-u_j^{(j)}}{H^1(\Omega)} \leq \norm{u-u_{j-1}^{(j-1)}}{H^1(\Omega)} + 
    \alpha \, \norm{w_j^{(j)}}{H^1(\Omega)}.
  \end{align*}
  Lipschitz continuity of $\JN^{-1}$ shows that
  \begin{align*}
    &\norm{u-u_{j-1}^{(j-1)}}{H^1(\Omega)} + \norm{\phi-\phi_{j}^{(j)}}{H^{-1/2}(\Gamma)}  
    \simeq \norm{(u,\phi)-(u_{j-1}^{(j-1)},\phi_j^{(j)})}{\HH} \\
    &\quad \lesssim \norm{\JN(u,\phi) -\JN(u_{j-1}^{(j-1)},\phi_j^{(j)})}{\HH^*}   \\
    &\quad \simeq \norm{\FE u \!-\! \FE u_{j-1}^{(j-1)} \!-\! \gamma_0'(\phi-\phi_j^{(j)})}{\widetilde H^{-1}(\Omega)} 
     + \norm{\slo(\phi-\phi_j^{(j)}) \!+\! (\tfrac12-\dlo)\gamma_0(u-u_{j-1}^{(j-1)})}{H^{1/2}(\Gamma)}.
  \end{align*}
  To estimate the boundary contribution, recall that $\slo\phi_\star^{(j)} = (\dlo-\tfrac12)(\gamma_0 u_{j-1}^{(j-1)}-u_0)$ and $\slo\phi =
  (\dlo-\tfrac12)(\gamma_0 u- u_0)$. Together with Remark~\ref{remark:reliable}, this proves that
  \begin{align*}
    &\norm{\slo(\phi-\phi_j^{(j)}) + (\tfrac12-\dlo)\gamma_0(u-u_{j-1}^{(j-1)})}{H^{1/2}(\Gamma)} = 
    \norm{\slo(\phi_\star^{(j)}-\phi_j^{(j)})}{H^{1/2}(\Gamma)} \\&\qquad \simeq
    \norm{\phi_\star^{(j)}-\phi_j^{(j)}}{H^{-1/2}(\Gamma)} \lesssim \mu_j(\phi_j^{(j)}) 
    + \norm{\phi_j^{(j),\star}-\phi_j^{(j)}}{H^{-1/2}(\Gamma)}.
  \end{align*}
  To estimate the volume contribution, recall that $\Riesz w_\star^{(j)} = f + \gamma_0' \phi_0 - (\FE u_{j-1}^{(j-1)} -
  \gamma_0'\phi_j^{(j)})$ and $\FE u -\gamma_0'\phi = f+\gamma_0'\phi_0$. 
  This gives 
  \begin{align*}
    \norm{\FE u - \FE u_{j-1}^{(j-1)} - \gamma_0'(\phi-\phi_j^{(j)})}{\widetilde H^{-1}(\Omega)} 
    &= \norm{\Riesz w_\star^{(j)}}{\widetilde H^{-1}(\Omega)} = \norm{w_\star^{(j)}}{H^1(\Omega)}.
  \end{align*}
  With the triangle inequality and Remark~\ref{remark:reliable}, we further infer that
  \begin{align*}
    \norm{w_\star^{(j)}}{H^1(\Omega)} &\leq \norm{w_\star^{(j)}-w_j^{(j)}}{H^1(\Omega)} + \norm{w_j^{(j)}}{H^1(\Omega)}
    \\
    &\lesssim  \eta_j(w_j^{(j)}) + \norm{w_j^{(j),\star}-w_j^{(j)}}{H^1(\Omega)} + \norm{w_j^{(j)}}{H^1(\Omega)}.
  \end{align*}
  Putting all together, we conclude the proof.
\end{proof}

\section{Numerical examples}\label{sec:ex}
\begin{figure}[htb]
  \begin{center}
    \begin{tikzpicture}
\begin{axis}[
    axis equal,
    width=0.45\textwidth,
    xlabel={$x$},
    ylabel={$y$},
]

\addplot[patch,color=white,
faceted color = black, line width = 1.5pt,
patch table ={./figures/elements.dat}] file{./figures/coordinates.dat};
\addplot[mark=*,color=gray,only marks] table[x index=0,y index=1] {./figures/coordinates.dat};
\end{axis}
\end{tikzpicture}
    \begin{tikzpicture}
\begin{axis}[
    axis equal,
    width=0.45\textwidth,
    xlabel={$x$},
    ylabel={$y$},
]

\addplot[patch,color=white,
faceted color = black, line width = 1.5pt,
patch table ={./figures/elementsZshape.dat}] file{./figures/coordinatesZshape.dat};
\addplot[mark=*,color=gray,only marks] table[x index=0,y index=1] {./figures/coordinatesZshape.dat};
\end{axis}
\end{tikzpicture}
  \end{center}
  \caption{L-shaped domain with initial triangulation of $12$ elements (left) and Z-shaped domain with initial
  triangulation of $14$ elements (right).}
  \label{fig:Lshape}
\end{figure}
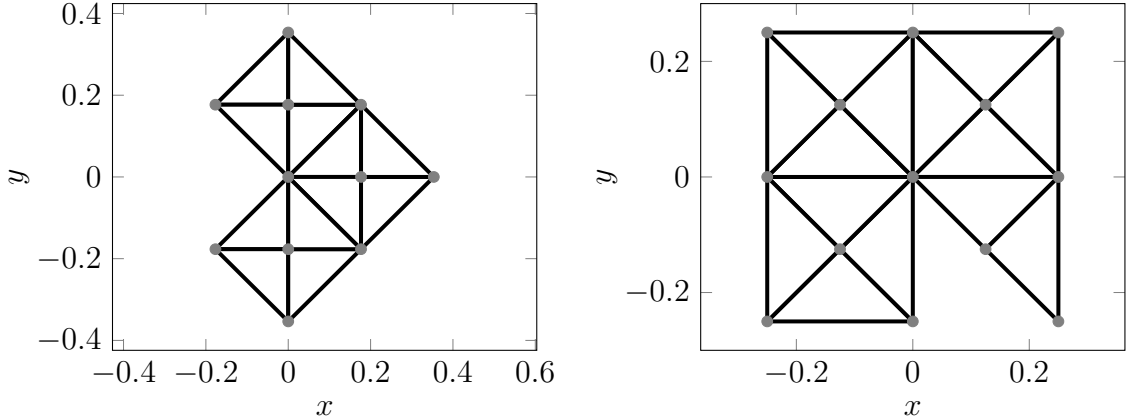
In this section, we present three numerical experiments in 2D to underpin our theoretical findings.
We consider two linear problems on an L-shaped domain and a nonlinear problem on a Z-shaped
domain, sketched in Figure~\ref{fig:Lshape}.
In all examples, the exact solution $(u, u^\mathrm{ext})$ of~\eqref{eq:modell} is known and the solution $u$ in the
interior has a singularity at the reentrant corner. 
We compute the error quantities (in each step of Algorithm~\ref{algorithm:uzawa:ideal})
\begin{align*}
  \err_\Omega^{(j)} &:= \norm{u-u_j^{(j)}}{H^1(\Omega)}, \\
  \err_\Gamma^{(j)} &:= \norm{h_j^{1/2}(\phi-\phi_{j}^{(j)})}{L^2(\Gamma)},
\end{align*}
and compare them to the error estimators $\eta_j,\mu_j$ and the global error estimator $\nu_j$.
Here, $h_j\in L^\infty(\Gamma)$ denotes the local mesh-size $h_j|_E = |E|$.
In all convergence plots, we use triangles to visualize slopes $(\#\TT)^{-s}$, where the experimental convergence rate
$s>0$ is written besides the
triangle.
The parameter for the D\"orfler marking criterion~\eqref{eq:abstract:doerfler} is set to $\theta =0.25$.

\subsection{Laplace transmission problem}\label{sec:ex1}
\begin{figure}[htb]
  \begin{center}
    \begin{tikzpicture}
\begin{loglogaxis}[
    title={\tiny $\gamma=0.85$},
width=0.49\textwidth,
cycle list name=exotic,
xlabel={number of elements $\#\TT$},
grid=major,
legend entries={\tiny $\err_\Omega$,\tiny $\err_\Gamma$, \tiny $\eta$, \tiny $\mu$, \tiny $\nu$},
legend pos=south west,
]
\addplot table [x=nE,y=errUZAWAH1] {figures/example1_gamma_85.dat};
\addplot table [x=nE,y=errUZAWABEM] {figures/example1_gamma_85.dat};
\addplot table [x=nE,y=estFEM] {figures/example1_gamma_85.dat};
\addplot table [x=nE,y=estBEM] {figures/example1_gamma_85.dat};
\addplot table [x=nE,y=estTOT] {figures/example1_gamma_85.dat};

\logLogSlopeTriangle{0.9}{0.2}{0.35}{0.5}{black}{{\tiny $\tfrac12$}};
\logLogSlopeTriangle{0.9}{0.3}{0.62}{0.2}{black}{{\tiny $\tfrac15$}};
\logLogSlopeTriangleBelow{0.85}{0.2}{0.05}{0.75}{black}{{\tiny $\tfrac34$}};
\end{loglogaxis}
\end{tikzpicture}
    \begin{tikzpicture}
\begin{loglogaxis}[
    title={\tiny $\gamma=0.9$},
width=0.49\textwidth,
cycle list name=exotic,
xlabel={number of elements $\#\TT$},
grid=major,
legend entries={\tiny $\err_\Omega$,\tiny $\err_\Gamma$, \tiny $\eta$, \tiny $\mu$, \tiny $\nu$},
legend pos=south west,
]
\addplot table [x=nE,y=errUZAWAH1] {figures/example1_gamma_90.dat};
\addplot table [x=nE,y=errUZAWABEM] {figures/example1_gamma_90.dat};
\addplot table [x=nE,y=estFEM] {figures/example1_gamma_90.dat};
\addplot table [x=nE,y=estBEM] {figures/example1_gamma_90.dat};
\addplot table [x=nE,y=estTOT] {figures/example1_gamma_90.dat};

\logLogSlopeTriangle{0.9}{0.15}{0.325}{0.5}{black}{{\tiny $\tfrac12$}};
\logLogSlopeTriangle{0.9}{0.2}{0.5}{0.33}{black}{{\tiny $\tfrac13$}};
\logLogSlopeTriangleBelow{0.85}{0.2}{0.05}{0.75}{black}{{\tiny $\tfrac34$}};
\end{loglogaxis}
\end{tikzpicture}
    \begin{tikzpicture}
\begin{loglogaxis}[
    title={\tiny $\gamma=0.95$},
width=0.49\textwidth,
cycle list name=exotic,
xlabel={number of elements $\#\TT$},
grid=major,
legend entries={\tiny $\err_\Omega$,\tiny $\err_\Gamma$, \tiny $\eta$, \tiny $\mu$, \tiny $\nu$},
legend pos=south west,
]
\addplot table [x=nE,y=errUZAWAH1] {figures/example1_gamma_95.dat};
\addplot table [x=nE,y=errUZAWABEM] {figures/example1_gamma_95.dat};
\addplot table [x=nE,y=estFEM] {figures/example1_gamma_95.dat};
\addplot table [x=nE,y=estBEM] {figures/example1_gamma_95.dat};
\addplot table [x=nE,y=estTOT] {figures/example1_gamma_95.dat};

\logLogSlopeTriangle{0.9}{0.2}{0.4}{0.5}{black}{{\tiny $\tfrac12$}};
\logLogSlopeTriangleBelow{0.85}{0.2}{0.05}{0.75}{black}{{\tiny $\tfrac34$}};
\end{loglogaxis}
\end{tikzpicture}
    \begin{tikzpicture}
\begin{loglogaxis}[
    title={\tiny $\gamma=0.98$},
width=0.49\textwidth,
cycle list name=exotic,
xlabel={number of elements $\#\TT$},
grid=major,
legend entries={\tiny $\err_\Omega$,\tiny $\err_\Gamma$, \tiny $\eta$, \tiny $\mu$, \tiny $\nu$},
legend pos=south west,
]
\addplot table [x=nE,y=errUZAWAH1] {figures/example1_gamma_98.dat};
\addplot table [x=nE,y=errUZAWABEM] {figures/example1_gamma_98.dat};
\addplot table [x=nE,y=estFEM] {figures/example1_gamma_98.dat};
\addplot table [x=nE,y=estBEM] {figures/example1_gamma_98.dat};
\addplot table [x=nE,y=estTOT] {figures/example1_gamma_98.dat};

\logLogSlopeTriangle{0.9}{0.2}{0.4}{0.5}{black}{{\tiny $\tfrac12$}};
\logLogSlopeTriangleBelow{0.85}{0.2}{0.05}{0.75}{black}{{\tiny $\tfrac34$}};
\end{loglogaxis}
\end{tikzpicture}
  \end{center}
  \caption{Error quantities and error estimators over the number of elements for the example from Section~\ref{sec:ex1}.}
  \label{fig:ex1}
\end{figure}
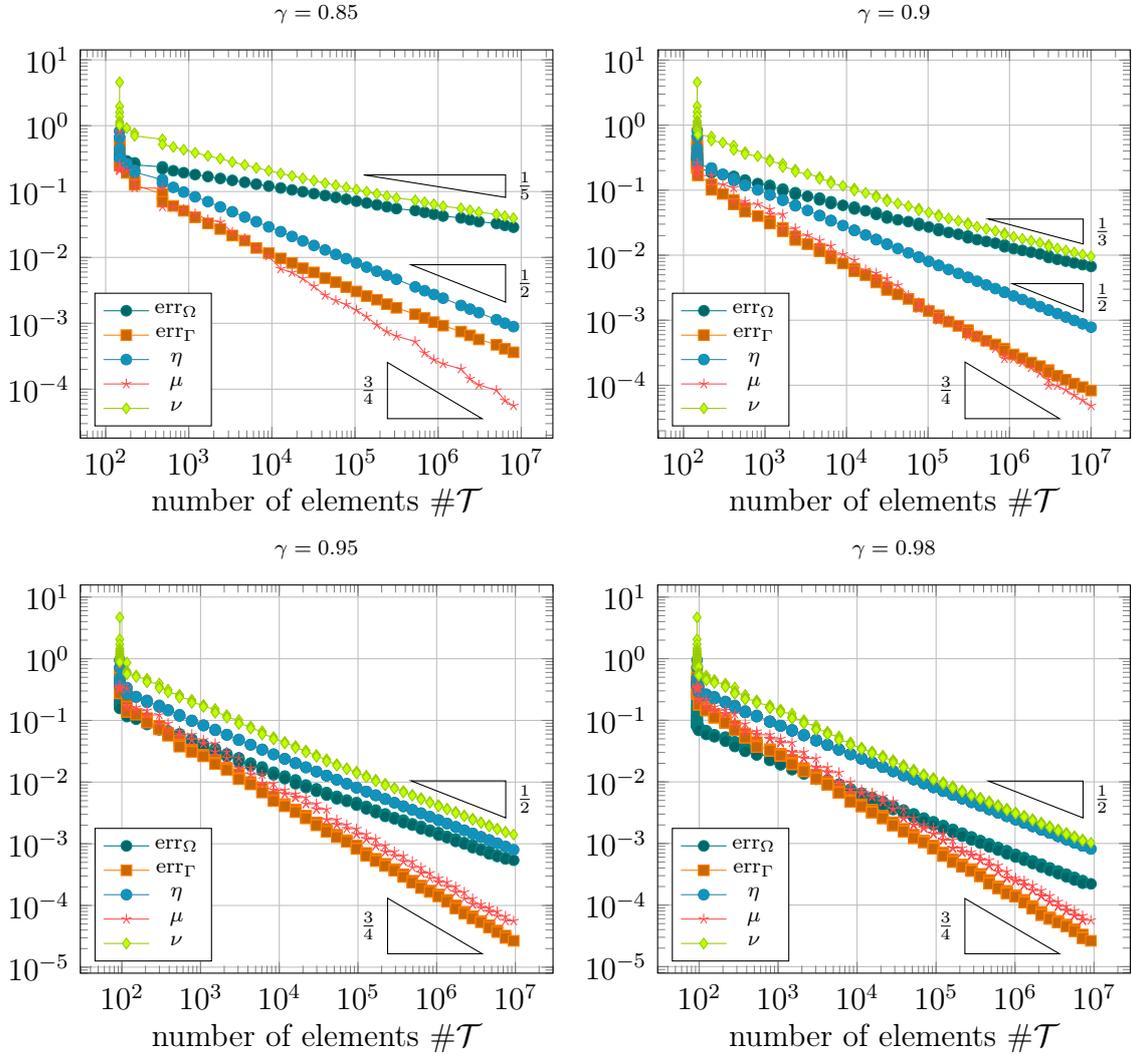
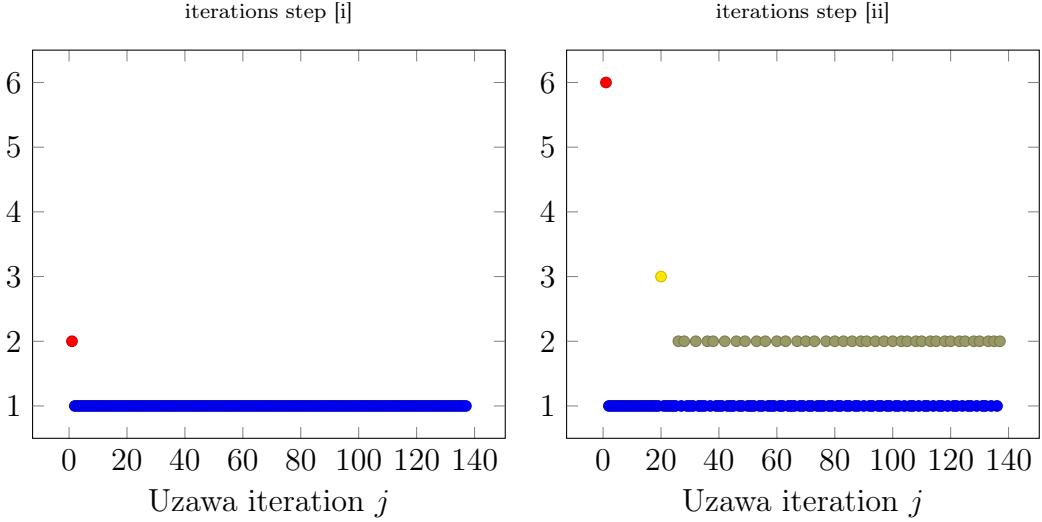
\begin{figure}[htb]
  \begin{center}
    \begin{tikzpicture}
\begin{axis}[
  title={\tiny iterations step [i] },
width=0.49\textwidth,
xlabel={Uzawa iteration $j$},
ymin=0.5,
ymax=6.5,
ytick=\empty,
extra y ticks={1,...,6},
]
  \addplot[scatter, only marks] table [x=iterUZ,y=kBEM] {figures/example1_gamma_95.dat};

\end{axis}
\end{tikzpicture}
    \begin{tikzpicture}
\begin{axis}[
  title={\tiny iterations step [ii] },
width=0.49\textwidth,
xlabel={Uzawa iteration $j$},
ytick=\empty,
extra y ticks={1,...,6},
]
  \addplot[scatter, only marks] table [x=iterUZ,y=kFEM] {figures/example1_gamma_95.dat};

\end{axis}
\end{tikzpicture}
  \end{center}
  \caption{Number of BEM (left) and FEM (right) iterations in each step of the Uzawa-type
  Algorithm~\ref{algorithm:uzawa:ideal} for the example from Section~\ref{sec:ex1} with $\alpha=0.05$, $\gamma=0.95$,
  $\theta=0.25$, and $\tau=10^{-3}$.}
  \label{fig:ex1:iter}
\end{figure}
We choose $u = r^{2/3}\cos(2/3\varphi)$, where $(r,\varphi)$ denote the polar coordinates. The exterior solution
$u^\mathrm{ext}$ is a smooth function.
We set $\material = \II$, $b=0$, $c=0$. Hence, the operator $\FE$ simplifies to
\begin{align*}
  \ip{\FE u}{v}_\Omega = \ip{\nabla u}{\nabla v}_\Omega \quad\text{for all }u,v\in H^1(\Omega),
\end{align*}
which corresponds to the case of the Laplace transmission problem. The approximation order of the spaces is set to
$p=0$, i.e., we use the spaces $\SS^1(\TT)$ and $\PP^0(\TT|_\Gamma)$.
Figure~\ref{fig:ex1} shows the error quantities, and estimators over the number of elements for a fixed $\alpha = 0.05$
and $\gamma \in \{0.85,0.9,0.95,0.98\}$.
We observe suboptimal rates $s=\tfrac15$, resp. $\tfrac13$ for $\gamma = 0.85$ resp. $\gamma = 0.9$, whereas 
$\gamma=0.95,0.98$ lead to the optimal rate $s=\tfrac12$ for the overall error.
This indicates that the contraction constant from Proposition~\ref{prop:uzawa:ideal} satisfies $q>\gamma=0.85,0.9$ and $q<0.95,0.98$. Comparing the number of total iterations
($j$) in the Uzawa-type Algorithm~\ref{algorithm:uzawa:ideal}, we get $j=43,67,137,347$ for $\gamma=0.85,0.9,0.95,0.98$.
Altough in both cases $\gamma=0.95,0.98$ we obtain optimal convergence rates, much more iterations are needed for
$\gamma=0.98$.

For all values of $\gamma$, we observe that the number of iterations within the steps [i] and [ii] of
Algorithm~\ref{algorithm:uzawa:ideal} is bounded, i.e., the number of BEM and FEM problems to be solved is bounded (see
Theorem~\ref{prop:bem:estimator} and Theorem~\ref{prop:fem:estimator}).
Figure~\ref{fig:ex1:iter} visualizes the number of iterations in step [i] and step [ii] with respect to the number of
iterations (indexed with $j$) in Algorithm~\ref{algorithm:uzawa:ideal} for $\gamma=0.95$.
The FEM and BEM problems are solved using PCG with appropriate (local) multilevel additive Schwarz preconditioners and
relative tolerance $\tau=10^{-3}$ (see Section~\ref{section:adaptive}).

\subsection{Modified Uzawa algorithm}\label{sec:uzawa:mod}
The observations of the last section lead us to a modified algorithm (not analyzed), where we adapt the value of $\gamma$
at the end of each iteration.
This modification is based on the following observation, which follows along the proof of
Proposition~\ref{prop:uzawa:ideal}: It holds that
\begin{align*}
  u^{(j+1)} - u^{(j)}  &= (\II-\alpha\Riesz^{-1}\LL)(u^{(j)}-u^{(j-1)}),
\end{align*}
and thus
\begin{align*}
  \norm{u^{(j+1)}-u^{(j)}}{H^1(\Omega)} \leq \norm{\II-\alpha\Riesz^{-1}\LL}{} 
  \norm{u^{(j)}-u^{(j-1)}}{H^1(\Omega)} \leq q \norm{u^{(j)}-u^{(j-1)}}{H^1(\Omega)}.
\end{align*}
This proves that
\begin{align}
  \frac{\norm{u^{(j+1)}-u^{(j)}}{H^1(\Omega)}}{\norm{u^{(j)}-u^{(j-1)}}{H^1(\Omega)}} \leq q.
\end{align}
Since the exact iterates $u^{(j)}$ are not known, we replace them by our approximations $u_{j}^{(j)}$ in the following
algorithm. Recall that $w_j^{(j)} = (u_j^{(j)}-u_{j-1}^{(j-1)})/\alpha$.

\begin{algorithm}\label{algorithm:uzawa:mod}
{\bfseries Input:} Parameter $\alpha > 0$, initial triangulation $\TT_0$, initial guess $u^{(0)}_0\in \XX_0$, constants
  $\Ci,\Cii>0$, $\eps_1>0$ and $0<\gamma<1$.\\
{\bfseries Discrete Uzawa iteration:} For all $j=1,2,3,\dots$, iterate the following steps~{\rm[i]--[v]}:
\begin{itemize}
\item[{\rm[i]}] Determine $\TT^{\rm[i]}_{j} \in \refine(\TT_{j-1})$ as well as some $\phi_j^{(j)}\in\PP^{p-1}(\TT^{\rm[i]}_{j}|_\Gamma)$ such that
\begin{align}\label{eq1:uzawa:mod}
 \norm{\phi_\star^{(j)} - \phi_j^{(j)}}{H^{-1/2}(\Gamma)} 
 \le \Ci \, \eps_j.
\end{align}
\item[{\rm[ii]}] Determine a triangulation $\TT^{\rm[ii]}_{j} \in \refine(\TT_j^{\rm[i]})$ and some $w_j^{(j)}\in\SS^{p}(\TT^{\rm[ii]}_{j})$ such that 
\begin{align}\label{eq2:uzawa:mod}
 \norm{w^{(j)}_\star - w^{(j)}_j}{H^1(\Omega)}
 \le\Cii\,\eps_j
\end{align}
\item[{\rm[iii]}] Define $\TT_j := \TT^{\rm[ii]}_{j}$ and $u_j^{(j)}:=u_{j-1}^{(j-1)}+\alpha w_j^{(j)}\in\SS^p(\TT_j)$.
\item[{\rm[iv]}] If $j\geq2$ update $\gamma := \norm{w_j^{(j)}}{H^1(\Omega)}/\norm{w_{j-1}^{(j-1)}}{H^1(\Omega)}$.
\item[{\rm[v]}] Set $\eps_{j+1} = \gamma\,\eps_{j}$.
\end{itemize}
\end{algorithm}

\subsection{Laplace transmission problem with modified Algorithm}\label{sec:ex2}
We consider the same example and configuration as in Section~\ref{sec:ex1}, but now use the modified
Algorithm~\ref{algorithm:uzawa:mod} instead of Algorithm~\ref{algorithm:uzawa:ideal}.
We set $\eps_1=1$ with initial $\gamma=0.95$. Figure~\ref{fig:ex2} shows the error quantities and error estimators over
the number of elements for $\alpha=0.03$ and $\alpha=0.05$. 
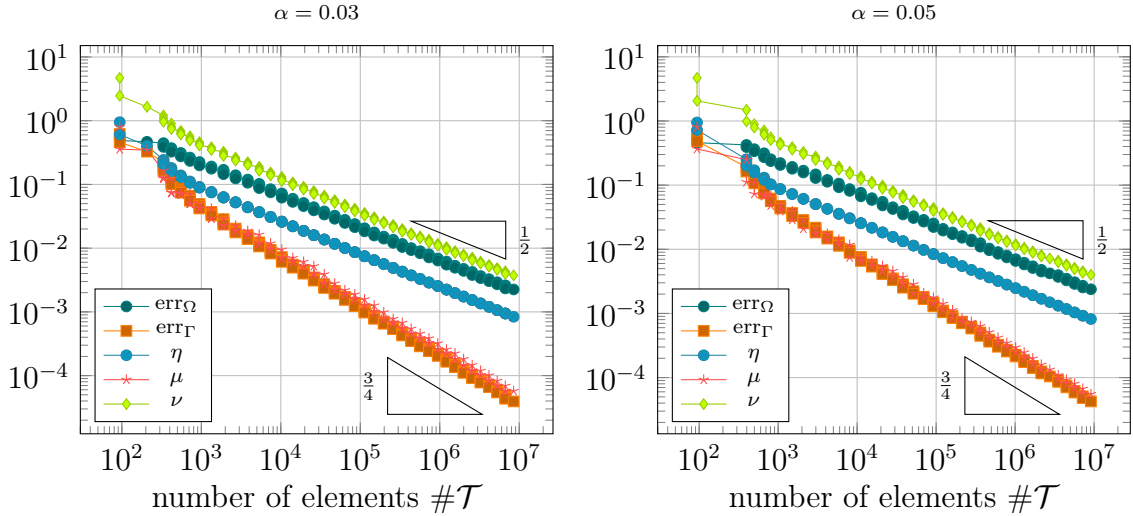
\begin{figure}[htb]
  \begin{center}
    \begin{tikzpicture}
\begin{loglogaxis}[
    title={\tiny $\alpha=0.03$},
width=0.49\textwidth,
cycle list name=exotic,
xlabel={number of elements $\#\TT$},
grid=major,
legend entries={\tiny $\err_\Omega$,\tiny $\err_\Gamma$, \tiny $\eta$, \tiny $\mu$, \tiny $\nu$},
legend pos=south west,
]
\addplot table [x=nE,y=errUZAWAH1] {figures/example2_alpha_003.dat};
\addplot table [x=nE,y=errUZAWABEM] {figures/example2_alpha_003.dat};
\addplot table [x=nE,y=estFEM] {figures/example2_alpha_003.dat};
\addplot table [x=nE,y=estBEM] {figures/example2_alpha_003.dat};
\addplot table [x=nE,y=estTOT] {figures/example2_alpha_003.dat};

\logLogSlopeTriangle{0.9}{0.2}{0.45}{0.5}{black}{{\tiny $\tfrac12$}};
\logLogSlopeTriangleBelow{0.85}{0.2}{0.05}{0.75}{black}{{\tiny $\tfrac34$}};
\end{loglogaxis}
\end{tikzpicture}
    \begin{tikzpicture}
\begin{loglogaxis}[
    title={\tiny $\alpha=0.05$},
width=0.49\textwidth,
cycle list name=exotic,
xlabel={number of elements $\#\TT$},
grid=major,
legend entries={\tiny $\err_\Omega$,\tiny $\err_\Gamma$, \tiny $\eta$, \tiny $\mu$, \tiny $\nu$},
legend pos=south west,
]
\addplot table [x=nE,y=errUZAWAH1] {figures/example2_alpha_005.dat};
\addplot table [x=nE,y=errUZAWABEM] {figures/example2_alpha_005.dat};
\addplot table [x=nE,y=estFEM] {figures/example2_alpha_005.dat};
\addplot table [x=nE,y=estBEM] {figures/example2_alpha_005.dat};
\addplot table [x=nE,y=estTOT] {figures/example2_alpha_005.dat};

\logLogSlopeTriangle{0.9}{0.2}{0.45}{0.5}{black}{{\tiny $\tfrac12$}};
\logLogSlopeTriangleBelow{0.85}{0.2}{0.05}{0.75}{black}{{\tiny $\tfrac34$}};
\end{loglogaxis}
\end{tikzpicture}
  \end{center}
  \caption{Error quantities and error estimators over the number of elements for the example from Section~\ref{sec:ex2}.}
  \label{fig:ex2}
\end{figure}
We observe that both values of $\alpha$ lead to optimal rates $s=\tfrac12$ for the overall error with respect to
the number of volume elements. For $\alpha=0.05$, we obtain a total of $82$ iterations, which is much less in comparison
to the $137$ iterations obtained in Section~\ref{sec:ex1} with a fixed $\gamma=0.95$.

\subsection{Transmission problem using higher order elements}\label{sec:ex3}
We consider essentially the same setting as in Section~\ref{sec:ex2}, but now use higher order elements $p=2$.
We set $\alpha = 0.07$ and use an exact solver for the realization of step [i] and [ii] in Algorithm~\ref{algorithm:uzawa:mod}. Furthermore, we
replace $\FE$ by the operator
\begin{align*}
  \ip{\FE u}v_\Omega := \tfrac1{10} \ip{\nabla u}{\nabla v}_\Omega \quad\text{for all } u,v\in H^1(\Omega).
\end{align*}
Note that for this operator there holds $c_\FE = \tfrac1{10}< c_\dlo/4$. We stress that it is not known whether the
discrete Johnson-N\'ed\'elec coupling~\eqref{eq:jn:discrete} is solvable or not; see Remark~\ref{rem:jn:discrete}.
Figure~\ref{fig:ex34} (left plot) shows the error quantities and error estimators with respect to the number of elements. 
As before, we observe an optimal decay of the overall error with respect to the number of elements, which in the case
$p=2$ is $s=1$.
\begin{figure}[htb]
  \begin{center}
    \begin{tikzpicture}
\begin{loglogaxis}[
    title={\tiny P2-P1 elements},
width=0.49\textwidth,
cycle list name=exotic,
xlabel={number of elements $\#\TT$},
grid=major,
legend entries={\tiny $\err_\Omega$,\tiny $\err_\Gamma$, \tiny $\eta$, \tiny $\mu$, \tiny $\nu$},
legend pos=south west,
]
\addplot table [x=nE,y=errUZAWAH1] {figures/example3.dat};
\addplot table [x=nE,y=errUZAWABEM] {figures/example3.dat};
\addplot table [x=nE,y=estFEM] {figures/example3.dat};
\addplot table [x=nE,y=estBEM] {figures/example3.dat};
\addplot table [x=nE,y=estTOT] {figures/example3.dat};

\logLogSlopeTriangle{0.9}{0.2}{0.33}{1}{black}{{\tiny $1$}};
\logLogSlopeTriangleBelow{0.85}{0.2}{0.05}{1.25}{black}{{\tiny $\tfrac54$}};
\end{loglogaxis}
\end{tikzpicture}
    \begin{tikzpicture}
\begin{loglogaxis}[
    title={\tiny Nonlinear},
width=0.49\textwidth,
cycle list name=exotic,
xlabel={number of elements $\#\TT$},
grid=major,
legend entries={\tiny $\err_\Omega$,\tiny $\err_\Gamma$, \tiny $\eta$, \tiny $\mu$, \tiny $\nu$},
legend pos=south west,
]
\addplot table [x=nE,y=errUZAWAH1] {figures/example4.dat};
\addplot table [x=nE,y=errUZAWABEM] {figures/example4.dat};
\addplot table [x=nE,y=estFEM] {figures/example4.dat};
\addplot table [x=nE,y=estBEM] {figures/example4.dat};
\addplot table [x=nE,y=estTOT] {figures/example4.dat};

\logLogSlopeTriangle{0.9}{0.2}{0.4}{0.5}{black}{{\tiny $\tfrac12$}};
\logLogSlopeTriangleBelow{0.85}{0.2}{0.05}{0.75}{black}{{\tiny $\tfrac34$}};
\end{loglogaxis}
\end{tikzpicture}
  \end{center}
  \caption{Error quantities and error estimators over the number of elements for the example from Section~\ref{sec:ex3}
  (left) resp. Section~\ref{sec:ex4} (right).}
  \label{fig:ex34}
\end{figure}
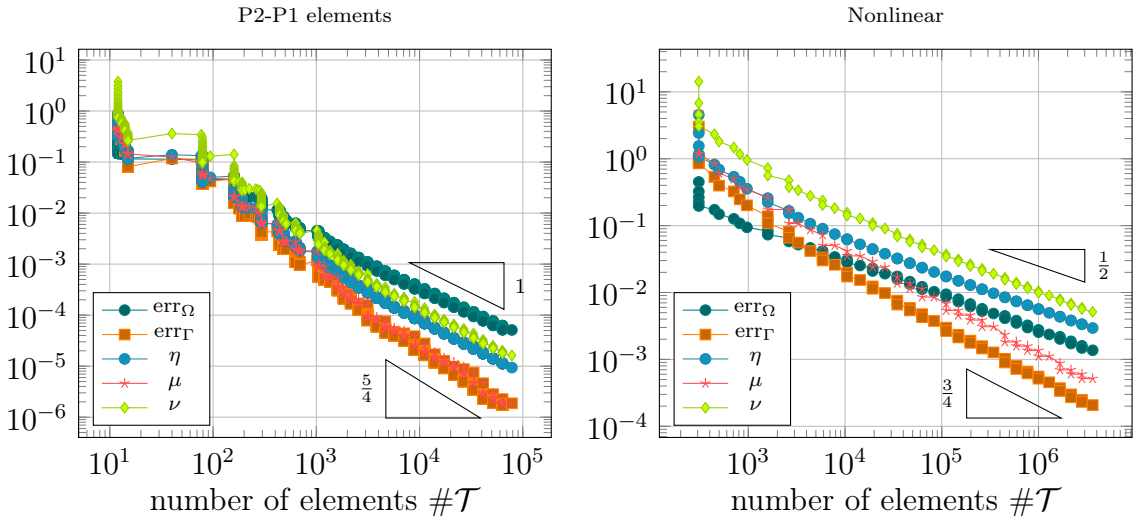

\subsection{Nonlinear transmission problem on Z-shaped domain}\label{sec:ex4}
We consider a nonlinear transmission problem on the Z-shaped domain sketched in Figure~\ref{fig:Lshape}. Again we apply
Algorithm~\ref{algorithm:uzawa:mod} with $\alpha=0.07$, $\eps_1 = 5$ and initial $\gamma=0.95$. We use lowest-order elements $p=1$ for
the approximations and an exact solver to realize step [i] and [ii] of Algorithm~\ref{algorithm:uzawa:mod}.
We set $c=b=0$ and $A\nabla v = \chi(|\nabla v|)\nabla v$ with $\chi(t) = 1+ \tanh(t)/t$ for $t>0$
and $\chi(0) = 2$, hence, $\FE$ reads
\begin{align*}
  \ip{\FE u}v_\Omega = \ip{\chi(|\nabla u|)\nabla u}{\nabla v}_\Omega \quad\text{for all }u,v\in H^1(\Omega).
\end{align*}
It can be proved that $\FE$ is strongly monotone with constant $c_\FE = 1$. We prescribe the exact interior solution $u =
r^{4/7}\cos(4/7\varphi)$ in polar coordinates $(r,\varphi)$ and use a smooth function for the exterior
solution $u^\mathrm{ext}$.
The error quantities and error estimators over the number of elements are plotted in Figure~\ref{fig:ex34} (right plot).
Again we observe an optimal decay of the overall error.


\bibliographystyle{abbrv}
\bibliography{literature}

\end{document}